\numberwithin{equation}{section}
\numberwithin{figure}{section}
\newcommand{\revision}[1]{\textcolor{black}{#1}}
\newcommand{\norm}[1]{\left\lVert#1\right\rVert}
\newcommand\abs[1]{\left|#1\right|}
\renewcommand{\vec}[1]{\mathbf{#1}}
\newcommand\vecsym[1]{\boldsymbol{#1}}
\renewcommand{\d}{\mathrm{d}}
\newcommand\N{\mathcal{N}}
\newcommand\R{\mathbb{R}}
\newcommand\Rdone{\mathbb{R}^{d_1}}
\newcommand\Rdtwo{\mathbb{R}^{d_2}}
\newcommand\Rnn{\mathbb{R}^{n\times n}}
\newcommand\Rdonedone{\mathbb{R}^{d_1\times d_1}}
\newtheorem{remark}{Remark}[section]
\newtheorem{lemma}[remark]{Lemma}
\newtheorem{corollary}[remark]{Corollary}
\newcommand{\keywords}{{\bf Keywords.}\ }
\newcommand{\subclass}{{\bf MSC2010.}\ }
\title{Mean field models for large data--clustering problems}
\author{M. Herty 
	\thanks{
		Institut f\"{u}r Geometrie und Praktische Mathematik -
		RWTH Aachen University --
		Templergraben 55, 52062 Aachen, Germany --
		{\sl herty@igpm.rwth-aachen.de}
	}
	\and 
	L. Pareschi 
	\thanks{
		Mathematics and Computer Science Department -
		University of Ferrara --
		Via Machiavelli 35,
		44121 Ferrara, Italy --
		{\sl lorenzo.pareschi@unife.it}
	}
	\and
	G. Visconti
	\thanks{
		Institut f\"{u}r Geometrie und Praktische Mathematik -
		RWTH Aachen University --
		Templergraben 55, 52062 Aachen, Germany --
		{\sl visconti@igpm.rwth-aachen.de}
	}
}
\date{\today}
\begin{document}

\maketitle    

\begin{abstract}
	We consider mean-field models for data--clustering problems starting from a generalization of the bounded confidence model for opinion dynamics. The microscopic model includes information on the position as well as on additional features of the particles in order to develop specific clustering effects. The corresponding mean--field limit is derived and properties of the model are investigated analytically. In particular, the mean--field formulation allows the use of a random subsets algorithm for efficient computations of the clusters. Applications to shape detection and image segmentation on standard test images are presented and discussed. 
\end{abstract}

\keywords{Data clustering, opinion dynamic, mean field equations, image segmentation, shape detection}

\smallskip
\subclass{82C40, 94A08, 68U10}


\section{Introduction} \label{sec:introduction}
Particle and kinetic models for consensus and cluster formation appeared in recent literature for self--organized socio--economic dynamical systems as opinion formation, flocking of birds or fish, elections and referendums under influence of mass media, etc. See e.g.~\cite{AlbiPareschiZanella2017,2016BoudinSalvarani,CouzinKrauseFranksLevin2005,CuckerSmale2007a,DegondMotsch2011,DuringMarkowichPietschmannWolfram2009,HegselmannKrause2002,MotschTadmor2011,CarrilloFornasierToscaniVecil2010}, the review articles~\cite{MotschTadmor2014, AlbiPareschiToscaniZanella2017, AlbiBellomoPareschietal2019} and the book\cite{PareschiToscaniBOOK}.
A related research direction is based on using the consensus features of these models in an artificial way to solve problems of optimization or segmentation of data in large dimensions \cite{KE95,PTTM17, CCTT18, 2016Liuetal}.

In this paper, we aim at formulating suitable models on the microscopic (or particle) level as well as on the mean--field (or kinetic) level to describe the 
partition of a large set of data in clusters. This problem is also known as data clustering problem and it is widely studied in many applications like pattern recognition, shape detection and image segmentation problems. 
The proposed methods do not need to have fixed a--priori number of clusters and clusters are characterized by small in--group and large out--group distances.

Since we are interested in modes characterizing distance and qualitative features without additional physical or socio--economical modeling background the proposed model will generalize the Hegselmann--Krause (HK) opinion dynamics model~\cite{HegselmannKrause2002}. Originally, the HK model was proposed in a microscopic setting, in one--dimensional spatial and time discrete framework. Several extensions exist, in the sequel we will briefly review the basic model before discussing its extension towards the image clustering problems. 

To this aim, let us consider a group of $n$ particles with a (scalar) initial state $x_i(0)\in\R$, $i=1,\dots,n$, and the state of each particle varies depending on the state of the others. The key idea of the Hegselmann--Krause (HK) model~\cite{HegselmannKrause2002} is that particles with completely different opinions do not influence each other, and a sort of mediation occurs among agents whose opinions are within a bounded confidence interval described by a parameter $\epsilon\geq0.$  Let $\vec{x}(t) = [x_1(t),\dots,x_n(t)]^T$ be the state of the system  at time $t\geq 0$. Then  the dynamic of the $i$--th particle  is given by 
\begin{equation} \label{eq:hkModel1D}
	\frac{\d}{\d t} x_i(t) = \sum_{j=1}^n A_{ij}(t,\epsilon) \left( x_j(t) - x_i(t) \right), \quad i=1,\dots,n
\end{equation}
where $\vec{A}(t,\epsilon)\in\Rnn$ is the time--varying adjacency matrix whose entries are in the form
\revision{
\begin{equation} \label{eq:adjacencyMatrix}
	A_{ij}(t,\epsilon) := \begin{cases}
		\displaystyle{\frac{1}{\sigma_i}}, & \text{if $j\in\N_i(t,\epsilon)$}\\[2ex]
		0, & \text{otherwise}
	\end{cases}
\end{equation}
with
\begin{equation} \label{eq:interactionDomain}
	\N_i(t,\epsilon) := \left\{ j\in\{1,\dots,n\} : \abs{x_i(t)-x_j(t)}\leq\epsilon \right\}, \quad i=1,\dots,n
\end{equation}
defining the neighborhood of the $i$--th particle at time $t$, and
\begin{equation} \label{eq:sigma}
	\sigma_i := \begin{cases}
	\displaystyle{\abs{\N_i(t,\epsilon)}},\\[2ex]
	n
	\end{cases}
\end{equation}
the type of interactions. Precisely, when $\sigma_i=n$ the interactions are symmetric since $A_{ij} = A_{ji}$, $\forall\,i,j$, but $\vec{A}$ is not a stochastic matrix. Instead, when $\sigma_i=\abs{\N_i(t,\epsilon)}$, then $\vec{A}$ is a right stochastic matrix but interactions are no longer symmetric.
}


Several works have been proposed in the literature which analyze the properties of the HK model. For instance, for the analysis in the time discrete setting we refer to~\cite{JabinMotsch2014}. 
In~\cite{BlondelHendrickxTsitsiklis2009} it is proven that,
during the evolution of the system~\eqref{eq:hkModel1D}, the order of the states is preserved. Thanks to the definition of the \revision{interaction kernel~\eqref{eq:adjacencyMatrix}--\eqref{eq:sigma}},  if $\abs{x_i (t) - x_{i+1} (t)} > \epsilon$, at some time $t$, it  remains true for larger times. 
Therefore, the HK model tends to group the initial states in a finite number of clusters as proved in~\cite{MotschTadmor2014}. Following~\cite{MotschTadmor2014}, we define a cluster \revision{$\mathcal{C}(t)$ at time $t\geq 0$} a subset of particles separated from all the other particles
$$ A_{ij}(t,\epsilon) \neq 0 \ \text{ for all } \ i,j\in\mathcal{C}\revision{(t)}, \quad A_{ij}(t,\epsilon) = 0 \ \text{ whenever } \ i\in\mathcal{C}\revision{(t)}, j\notin\mathcal{C}\revision{(t)}.$$
In~\cite{BlondelHendrickxTsitsiklis2009,Dittmer2001,Lorenz2005} the stability of
the dynamical model is investigated. In particular, the fact that the system converges to a steady profile in finite time is proved in~\cite{BlondelHendrickxTsitsiklis2009}. For further results on the one--dimensional local and symmetric model we refer also to~\cite{BlondelHendrickxTsitsiklis2010}. We point out that also behavior of cluster formation in the transient is of interest in the mathematical literature~\cite{2016DietrichMartinJungers}.

In~\cite{NedicTouri2012,2010BMS}, the one-dimensional Hegselmann--Krause is generalized to the case of a multi--dimensional data--set. Subsequently, the multi--dimensional HK model has been used as a technique to cluster a big amount of data into a small number of subsets with some common features~\cite{Olivaetal2015} and to compare its performance with the $k$--means algorithm~\cite{MacQueen1967}. Recently, in~\cite{2016Liuetal,2014NovikovBenderskaya} new approaches to clustering problems and image segmentation have been proposed based on the Kuramoto model.

Here, we introduce a generalization of the multi--dimensional formulation of the HK model to solve data clustering problems. This amounts to  take into account clustering with respect to different features. We deal with data having both time dependent and static features. The latter describe intrinsic properties of a datum, such as the measure of a trustworthy information or the color intensity of pixels in images. We derive the corresponding mean--field limit and investigate analytically the properties of the model. In particular, following\cite{AlbiPareschi2013} the mean--field formulation allows the use of a random subset algorithm for efficient computations of the clusters.  

The rest of the manuscript is organized as follows. The microscopic model is introduced at the beginning of Section~\ref{sec:newModel} and briefly discussed in Section~\ref{sec:microProp}.
The proposed model is still a microscopic model and we describe the case of large data using a mean--field equation in Section~\ref{sec:mfModel}. 
Analytical properties of the kinetic equation, such as a--priori estimation on the evolution of the moments and characterization of the limit distribution, are discussed in Section~\ref{sec:kineticProp}. Numerical evidence of the theoretical results is provided in Section~\ref{sec:numericalProp}.
Further, we propose applications on detection and compression of data, such as shape detection, in Section~\ref{sec:shapeDetect}, and image segmentation, in Section~\ref{sec:imageSeg}.
We finally conclude with some remarks and future research directions in Section~\ref{sec:conclusion}.

\section{Microscopic models for data--clustering} \label{sec:newModel}

Each particle $i=1,\dots,N$ is endowed with a time--dependent state vector $\vec{x}_i=\vec{x}_i(t)$ as well as
features $\vec{c}_i = [c_{i,1},\dots,c_{i,d_2}] \in \Rdtwo$  representing  static characteristics of the system, i.e., $\vec{c}_i$ is independent of time. As a motivation example consider an image segmentation problem where $\vec{x}_i$ are the center point of  a pixel or voxels of the image and $\vec{c}_i$ the color coding at the center point. 
 
As in the HK model we define the neighborhood of the particles by 
\begin{equation} \label{eq:interactionDomainGeneral}
\N_i(t,\epsilon_1,\epsilon_2) := \left\{ j\in\{1,\dots,n\} : \norm{\vec{x}_i(t)-\vec{x}_j(t)}_{\Rdone}\leq\epsilon_1, \norm{\vec{c}_i-\vec{c}_j}_{\Rdtwo}\leq\epsilon_2 \right\}, \quad i=1,\dots,n
\end{equation}
and $A_{ij}(t,\epsilon_1,\epsilon_2)$ are entries of the time--varying matrix $\vec{A}(t,\epsilon_1,\epsilon_2)\in\Rnn$ defined as
\revision{
\begin{equation} \label{eq:generalMatrix}
	A_{ij}(t,\epsilon_1,\epsilon_2) := \begin{cases}
	\displaystyle{\frac{1}{\sigma_i}}, & \text{if $j\in\N_i(t,\epsilon_1,\epsilon_2)$}\\[2ex]
	0, & \text{otherwise}
	\end{cases}
\end{equation}
with $\sigma_i$ defined analogously to~\eqref{eq:sigma}.
}
Here, $\epsilon_1\geq 0$ and $\epsilon_2\geq 0$ are two bounded confidence levels. The two metrics $\norm{\cdot}_{\Rdone}$ and $\norm{\cdot}_{\Rdtwo}$ need to be properly defined according to the specific context of the problem. Then, the mathematical model \revision{for any $t\geq 0$} is given by 
\revision{
\begin{align} \label{eq:microStaticEq}
	c_{i,k}(t) &= c_{i,k}(0), \quad i=1,\dots,n, \quad k=1,\dots,d_2, \\ 
 \label{eq:generalModel}
	\frac{\d}{\d t} x_{i,k}(t) &= \sum_{j=1}^n A_{ij}(t,\epsilon_1,\epsilon_2) \left( x_{j,k}(t) - x_{i,k}(t) \right), \quad i=1,\dots,n, \quad k=1,\dots,d_1
\end{align}
and initial condition $c_{i,k}(0)=c_{i,k}^0$} and $x_{i,k}(0)=x_{i,k}^0.$ Notice that $\epsilon_2 > \max_{i,j=1,\dots,n} \norm{\vec{c}_i-\vec{c}_j}_{\Rdtwo}$ is a sufficient condition to reduce model~\eqref{eq:generalModel} to the multi--dimensional version of the HK model~\eqref{eq:hkModel1D}. 

\subsection{Properties of the microscopic model} \label{sec:microProp}
Existence and convergence of solutions to system~\eqref{eq:generalModel} can be established by using same techniques as in~\cite{BlondelHendrickxTsitsiklis2010} where the original HK in the one--dimensional case was analyzed. In fact, system~\eqref{eq:microStaticEq}-~\eqref{eq:generalModel} belongs to the same class of state-switched systems.

In the case of symmetric interactions we can recover from the previous model similar results on the moment behavior as presented for the HK model in \cite{BlondelHendrickxTsitsiklis2010,Hendrickx2008}. We only record the results here since the proofs are slight variations of existing results (see for example \cite{BlondelHendrickxTsitsiklis2010}). 

Define the moments $\vec{m}_1(t) \in \Rdone$ and $\vec{m}_2(t) \in \Rdonedone$ with respect \revision{to} the time dependent feature as
\begin{equation} \label{eq:discreteMom}
\vec{m}_1(t) := \sum_{i=1}^n \vec{x}_i(t), \quad \quad \vec{m}_2(t) := \sum_{i=1}^n \vec{x}_i(t) \otimes \vec{x}_i(t),
\end{equation}
then the following result holds true.
\begin{lemma} \label{th:discreteMomSymmetric}
	Let \revision{$(\vec{x}_i(t))_{1\leq i \leq n}$} be the solution of the dynamical system~\eqref{eq:generalModel} with symmetric interactions, i.e.~$\sigma_i=n$ in~\eqref{eq:generalMatrix}. Then, we obtain 
	$$
	\vec{m}_1(t) = \vec{m}_1(0), \quad \frac{\d}{\d t} \left( \vec{m}_2(t) \right)_{kk} \leq 0, \; k=1,\dots,d_1.
	$$
\end{lemma}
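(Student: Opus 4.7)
The plan is to prove both claims by the standard symmetrization trick: rewrite each sum over $(i,j)$, exchange the dummy indices using $A_{ij}=A_{ji}$, and average the two expressions to expose a manifestly signed quantity.

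For the conservation of $\vec{m}_1$, I would differentiate under the finite sum and substitute \eqref{eq:generalModel} componentwise to obtain
\begin{equation*}
\frac{\d}{\d t}\bigl(\vec{m}_1(t)\bigr)_k=\sum_{i=1}^n\sum_{j=1}^n A_{ij}(t,\epsilon_1,\epsilon_2)\bigl(x_{j,k}(t)-x_{i,k}(t)\bigr).
\end{equation*}
Swapping the roles of $i$ and $j$ in the double sum, and invoking symmetry $A_{ij}=A_{ji}$ (which holds precisely when $\sigma_i=n$, since then $A_{ij}=\tfrac{1}{n}\mathbf{1}_{\{j\in\N_i\}}$ and $j\in\N_i\Leftrightarrow i\in\N_j$), the same quantity equals its own negative and is therefore zero. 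Hence $\vec{m}_1(t)=\vec{m}_1(0)$.

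For the diagonal of $\vec{m}_2$, I would compute
\begin{equation*}
\frac{\d}{\d t}\bigl(\vec{m}_2(t)\bigr)_{kk}=2\sum_{i=1}^n x_{i,k}(t)\,\frac{\d}{\d t}x_{i,k}(t)=2\sum_{i,j=1}^n A_{ij}\,x_{i,k}\bigl(x_{j,k}-x_{i,k}\bigr).
\end{equation*}
Applying the symmetrization trick (swap $i\leftrightarrow j$, use $A_{ij}=A_{ji}$, and average the resulting two expressions) rewrites the right-hand side as
\begin{equation*}
\sum_{i,j=1}^n A_{ij}\bigl(x_{i,k}-x_{j,k}\bigr)\bigl(x_{j,k}-x_{i,k}\bigr)=-\sum_{i,j=1}^n A_{ij}\bigl(x_{i,k}-x_{j,k}\bigr)^2\leq 0,
\end{equation*}
since $A_{ij}\geq 0$.

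There is essentially no obstacle here: both statements follow from the antisymmetry of $x_j-x_i$ against the symmetric weight $A_{ij}$. The only point that merits care is verifying that symmetry of $\vec{A}$ is a genuine consequence of $\sigma_i=n$ (as opposed to the right-stochastic choice $\sigma_i=\abs{\N_i}$), which is immediate from the symmetric definition of $\N_i$ in \eqref{eq:interactionDomainGeneral}.
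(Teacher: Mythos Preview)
Your proof is correct and is precisely the standard symmetrization argument for such systems; the paper itself does not spell out a proof of this lemma but only records the result and refers to \cite{BlondelHendrickxTsitsiklis2010}, so your argument is exactly what is intended. The one point you flag---that $\sigma_i=n$ is what makes $\vec{A}$ symmetric---is the only subtlety, and you have addressed it correctly.
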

\begin{corollary}
	Let \revision{$(\vec{x}_i(t))_{1\leq i \leq n}$} be the solution of the dynamical system~\eqref{eq:generalModel}. Assume that $\epsilon_1$ and $\epsilon_2$ are sufficiently large so that interactions are global. Then the first moment is conserved and the following decay estimate \revision{holds:}
	\begin{align*}
	\frac{\mathrm{d}}{\mathrm{d}t} \left( \vec{m}_2(t) \right)_{k\ell}& = \frac{2}{n} \left( \vec{m}_1(0)\right )_k \left( \vec{m}_1(0)\right )_\ell - 2 \left( \vec{m}_2(t) \right)_{k\ell}, \\
	\lim\limits_{t \to \infty}	\left( \vec{m}_2(t) \right)_{k\ell} &= \frac{\left( \vec{m}_1(0)\right )_k \left( \vec{m}_1(0)\right )_\ell}{n}.
	\end{align*} 
\end{corollary}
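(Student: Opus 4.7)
The plan is to exploit the fact that global interactions collapse the switched system into a simple linear ODE. When $\epsilon_1$ and $\epsilon_2$ are large enough that $\mathcal{N}_i(t,\epsilon_1,\epsilon_2)=\{1,\dots,n\}$ for every $i$ and $t$, both choices of $\sigma_i$ in~\eqref{eq:generalMatrix} reduce to $\sigma_i=n$, hence $A_{ij}\equiv 1/n$. The evolution equation~\eqref{eq:generalModel} therefore simplifies to
\begin{equation*}
	\frac{\d}{\d t} x_{i,k}(t) \;=\; \frac{1}{n}\sum_{j=1}^n x_{j,k}(t) - x_{i,k}(t) \;=\; \frac{(\vec{m}_1(t))_k}{n} - x_{i,k}(t),
\end{equation*}
which is the starting point for the two claims.

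First I would derive conservation of $\vec{m}_1$: summing the above identity over $i=1,\dots,n$ gives $\frac{\d}{\d t}(\vec{m}_1(t))_k = (\vec{m}_1(t))_k-(\vec{m}_1(t))_k=0$, so $\vec{m}_1(t)=\vec{m}_1(0)$ for all $t\geq 0$. (This also falls out of Lemma~\ref{th:discreteMomSymmetric} since global interactions are symmetric.) Then, by the Leibniz rule applied to $(\vec{m}_2(t))_{k\ell}=\sum_i x_{i,k}(t)x_{i,\ell}(t)$ and substituting the simplified dynamics, I would compute
\begin{equation*}
\frac{\d}{\d t}(\vec{m}_2(t))_{k\ell}
= \sum_{i=1}^n\!\left[\left(\tfrac{(\vec{m}_1)_k}{n}-x_{i,k}\right)\! x_{i,\ell} + x_{i,k}\!\left(\tfrac{(\vec{m}_1)_\ell}{n}-x_{i,\ell}\right)\right]
= \tfrac{2}{n}(\vec{m}_1)_k(\vec{m}_1)_\ell - 2(\vec{m}_2(t))_{k\ell},
\end{equation*}
where in the last step I collect the two mixed terms using $\sum_i x_{i,k}=(\vec{m}_1)_k$ and the two quadratic terms using the definition of $\vec{m}_2$. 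Replacing $\vec{m}_1(t)$ by $\vec{m}_1(0)$ (conservation) yields the first stated identity.

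Finally, the resulting scalar equation for $(\vec{m}_2(t))_{k\ell}$ is an inhomogeneous linear ODE with constant forcing $\tfrac{2}{n}(\vec{m}_1(0))_k(\vec{m}_1(0))_\ell$ and decay rate $2$; its unique fixed point is $\tfrac{(\vec{m}_1(0))_k(\vec{m}_1(0))_\ell}{n}$, and every trajectory converges to it exponentially as $t\to\infty$, giving the second claim. I do not expect any serious obstacle: the whole proof is a direct calculation once the global interaction case is identified, and exponential decay to the unique equilibrium is standard for a scalar linear ODE.
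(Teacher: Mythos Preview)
Your argument is correct. The paper does not actually supply a proof of this corollary---it states that the discrete moment results are ``slight variations of existing results'' and defers to the literature---so there is no detailed paper proof to compare against. Your direct computation (reduce to $A_{ij}\equiv 1/n$, sum to get $\dot{\vec m}_1=0$, apply the product rule to $(\vec m_2)_{k\ell}$, and read off the linear ODE) is exactly the natural route and mirrors the argument the paper does write out for the continuous analogue (Corollary~\ref{cor:kineticGlobal}), where the same moment ODE $\dot E_{kj}=-2E_{kj}+2u_k(0)u_j(0)$ is derived and solved explicitly.
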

Later, we will show that similar results hold for the continuous model.
Some remarks on further properties concerning the particle model~\eqref{eq:generalModel} are in order.
\begin{remark}~ 
\begin{itemize}
\item Formation of clusters in the large time behavior is extensively investigated in the review article~\cite{MotschTadmor2014} for general systems of the form~\eqref{eq:generalModel}. However, number of clusters cannot be a--priori predicted starting from a given initial configuration.

\item In the \revision{non--symmetric} case, conservation of the first moment does not hold true. Also, in general, it is not possible to show the decay of the second moment although clustering still appears, see~\cite{MotschTadmor2014}.
\item Extensions of the model to take into account non static features are obtained by including an interaction term on the right hand side of \eqref{eq:microStaticEq}. The determination of such interaction term, however, would be rather problem dependent and in this paper we will not explore further this direction.
\end{itemize}
\end{remark}


\section{Mean--field description} \label{sec:mfModel}

In the case of many particles we derive the formal mean--field equation. Let $\Omega_1 \subseteq \Rdone$, $\Omega_2 \subseteq \Rdtwo$ \revision{be compact domains} and $\Omega = \Omega_1 \times \Omega_2$. \revision{For $n\geq 1$} we denote by  \revision{$f_n:\mathbb{R}^+\times \Omega \to \mathbb{R}$} the empirical distribution on $\Omega \subset \mathbb{R}^{d_1\times d_2}$ given by  
$$
	f_n(t,\vec{x},\vec{c}) := \frac{1}{n} \sum_{i=1}^n \delta(\vec{x}-\vec{x}_i(t)) \delta(\vec{c} - \vec{c}_i(t)). 
$$

Let us consider
a test function $\varphi(\vec{x},\vec{c}) \in C^1_0(\Omega)$\revision{, i.e.~the space of continuous and compactly supported functions on $\Omega$ with continuous derivative.} Denote by \revision{$\langle \cdot,\cdot \rangle$ the integration of $f_n$ against the test function $\varphi$ on $\Omega$.} We have
\begin{align*}
\frac{\d}{\d t} \langle f_n(t),\varphi \rangle & =  \frac{1}{n} 
\sum_{i=1}^n \frac{d}{dt } \varphi(\vec{x}_i(t),\vec{c}_i(t)) = \frac{1}{n}
\sum_{i=1}^n \frac{1}{\sigma_i} \sum_{j\in\N_i(t,\epsilon_1,\epsilon_2)}  \nabla_x \varphi(\vec{x}_i(t),\vec{c}_i(t))\cdot
\left( \vec{x}_j(t) - \vec{x}_i(t) \right)\\
&= 
\left\langle f_n(t), \frac{1}{n\,\sigma(t,\vec{x},\vec{c})}  \sum_{j=1}^n \chi_{\epsilon_1}\left( \norm{\vec{x}_j(t)-\vec{x}} \right) \chi_{\epsilon_2}\left( \norm{\vec{c}_j(t)-\vec{c}} \right) (\vec{x}_j(t)-\vec{x}) \cdot \nabla_x \varphi \right\rangle
\end{align*}
where \revision{we defined $$\chi_\epsilon(x)=\begin{cases} 1, \quad x\leq \epsilon\\0, \quad \text{else}\end{cases}$$ and} we used the fact that equation~\eqref{eq:generalModel} can be re-written as
\[
\frac{\d}{\d t} \vec{x}_i(t) = \frac{1}{n\,\sigma(t,\vec{x}_i(t),\vec{c}_i(t))} \sum_{j=1}^n \chi_{\epsilon_1}\left( \norm{\vec{x}_j(t)-\vec{x}_i(t)} \right) \chi_{\epsilon_2}\left( \norm{\vec{c}_j(t)-\vec{c}_i(t)} \right)\left( \vec{x}_j(t) - \vec{x}_i(t) \right),
\]
with $\sigma_i= n \sigma(t,\vec{x}_i(t),\vec{c}_i(t)) $ and
\[
\sigma(t,\vec{x}_i(t),\vec{c}_i(t)) = \frac1{n} \sum_{j=1}^n 
\chi_{\epsilon_1}\left( \norm{\vec{x}_j(t)-\vec{x}_i(t)} \right) \chi_{\epsilon_2}\left( \norm{\vec{c}_j(t)-\vec{c}_i(t)} \right).
\]
We can easily compute
\begin{align*}
\sigma(t,\vec{x},\vec{c})&= \frac1{n}\sum_{j=1}^n 
\chi_{\epsilon_1}\left( \norm{\vec{x}_j(t)-\vec{x}} \right) \chi_{\epsilon_2}\left( \norm{\vec{c}_j(t)-\vec{c}} \right)\\
&= 
\left\langle \chi_{\epsilon_1}\left( \norm{\vec{y}-\vec{x}} \right) \chi_{\epsilon_2}\left( \norm{\vec{z}-\vec{c}} \right),\delta (\vec{y}- \vec{x}_j(t))\delta (\vec{z}- \vec{c}_j(t)) \right\rangle\\
&\int_{\Omega}\chi_{\epsilon_1}\left( \norm{\vec{y}-\vec{x}} \right) \chi_{\epsilon_2}\left( \norm{\vec{z}-\vec{c}} \right)f_n(t,\vec{y},\vec{z})\, \d \vec{z}\,\d \vec{y}, 
\end{align*}
and similarly
\begin{align*}
&\frac1{n\,\sigma(t,\vec{x},\vec{c})}\sum_{j=1}^n \chi_{\epsilon_1}\left( \norm{\vec{x}_j(t)-\vec{x}} \right) \chi_{\epsilon_2}\left( \norm{\vec{c}_j(t)-\vec{c}} \right) (\vec{x}_j(t)-\vec{x})\\
&= \frac1{\sigma(t,\vec{x},\vec{c})} \int_{\Omega} \chi_{\epsilon_1}\left( \norm{\vec{y}-\vec{x}} \right) \chi_{\epsilon_2}\left( \norm{\vec{z}-\vec{c}} \right) (\vec{y}-\vec{x}) f_n(t,\vec{y},\vec{z}\,)\, \d \vec{z}\,\d \vec{y}.
\end{align*}
Collecting these formal computations,  
after integration by part in $\vec{x}$, we obtain the weak form of the mean--field equation 
\begin{align*}
	\frac{\d}{\d t} \langle f_n,\varphi \rangle + \left\langle \nabla_\vec{x} \cdot \left(f_n(t,\vec{x},\vec{c})\int_{\Omega} \frac{\chi_{\epsilon_1}\left( \norm{\vec{y}-\vec{x}} \right) \chi_{\epsilon_2}\left( \norm{\vec{z}-\vec{c}} \right)}{\sigma(t,\vec{x},\vec{c})} (\vec{y}-\vec{x}) f_n(t,\vec{y},\vec{z})\, \d \vec{z}\,\d \vec{y} \right),\varphi \right\rangle = 0. 
\end{align*}

If we now define a \emph{kernel} $\mathcal{A}$ as continuous extension of the adjacency matrix~\eqref{eq:generalMatrix} 
\begin{equation} \label{eq:kernelH}
	\mathcal{A}_{\epsilon_1,\epsilon_2}(t,\vec{x},\vec{c},\vec{y},\vec{z}) = \frac{\chi_{\epsilon_1}(\norm{\vec{y}-\vec{x}}) \chi_{\epsilon_2}\left( \norm{\vec{z}-\vec{c}} \right)}{\sigma(t,\vec{x},\vec{c})},
\end{equation}
and $\vec{V}$ given by 
\begin{equation} \label{eq:vField}
	\vec{V}_{\epsilon_1,\epsilon_2}(t,\vec{x},\vec{c}) = \int_{\Omega} \mathcal{A}_{\epsilon_1,\epsilon_2}(t,\vec{x},\vec{c},\vec{y},\vec{z}) (\vec{y}-\vec{x}) f(t,\vec{y},\vec{z})\, \d\vec{z}\,\d\vec{y}
\end{equation}
in the limit $n\to\infty$, assuming that the empirical measure $f_n(t,\vec{x},\vec{c})$ converge to $f(t,\vec{x},\vec{c})$, we formally obtain the strong form of the kinetic equation as 
\begin{equation} \label{eq:kineticEq}
	\partial_t f(t,\vec{x},\vec{c}) + \nabla_\vec{x} \cdot \left( \vec{V}_{\epsilon_1,\epsilon_2}(t,\vec{x},\vec{c}) f(t,\vec{x},\vec{c}) \right) = 0. 
\end{equation}
%

Rigorous analytical results on convergence in the case of $\epsilon_2$ very large and symmetric interactions have been already discussed, for instance in~\cite{CanutoFagnaniTilli2008,CanutoFagnaniTilli12}. These results guarantee convergence of the distribution $f$ in~\eqref{eq:kineticEq} to a probability limit distribution $f^\infty$, provided \revision{the initial distribution $f_0$ at time $t=0$} has finite second moment and $\mathcal{A}$ is a non--negative, bounded, measurable and symmetric kernel. Observe that these assumptions are verified also in our framework. For a more detailed discussion on analytical results on convergence in mean--field models, we refer e.g.~to~\cite{CanizoCarrilloRosado2011,CarrilloFornasierToscaniVecil2010}

We briefly discuss the relation to similar kinetic models.

In~\cite{BorraLorenzi2013} the analysis of a homogeneous kinetic model for opinion dynamics under bounded confidence is studied. Therein, the model is derived by using a Boltzmann--like approach with instantaneous binary interactions describing compromise. An analogous derivation of the kinetic equation~\eqref{eq:kineticEq} is also possible using a binary interaction model based on an explicit Euler discretization of the underlying particle dynamics~\eqref{eq:generalModel} with $n=2$ and performing a grazing collision limit. We omit this computation.

In~\cite{BorraLorenzi2013} the authors prove the weak convergence of the solution to a convex combination of Dirac delta functions. In~\cite{CanutoFagnaniTilli2008} a similar asymptotic distribution is found for the mean--field limit of the classical Hegselmann--Krause model.

A further symmetric clustering model with weighted interactions with respect a fixed number of closest neighbors and corresponding mean--field limit has been introduced in~\cite{AlbiPareschi2013AppMathLett}. Moments and long--time behavior could also be studied therein. Finally, models for other applications are also able to cluster information e.g.~in traffic flow modeling~\cite{PgSmTaVg2} where the physical acceleration has the role of the bounded confidence.  
 
\subsection{Properties of the mean--field model} \label{sec:kineticProp}

As preliminary remark we observe that the marginal distribution $\tilde{f}^c(t,\vec{c}):=\int_{\Omega_1} f(t,\vec{x},\vec{c}) \d\vec{x}$ is preserved in time by the kinetic equation~\eqref{eq:kineticEq}. Instead, it is easy to check that the marginal distribution $\tilde{f}^x(t,\vec{x}):=\int_{\Omega_2} f(t,\vec{x},\vec{c}) \d\vec{c}$ is not stationary and its behavior in time is still influenced by a $\vec{c}$ dependent kernel. These considerations are direct consequence of the microscopic model~\eqref{eq:generalModel} and~\eqref{eq:microStaticEq}. For this reason, in the following results and if not otherwise stated, we mainly focus on the analysis of moments with respect to the variable $\vec{x}$.

As in the discrete case we define the $p$--th moment of the kinetic distribution with respect to $\vec{x}$ as
\begin{equation*}
	\langle \vec{x}^{\vecsym{\alpha}} \rangle(t) = \int_{\Omega} \vec{x}^{\vecsym{\alpha}} f(t,\vec{x},\vec{c}) \d\vec{x} \d\vec{c}, \quad \abs{\vecsym{\alpha}} = p\in\mathbb{N}, \ \alpha_i\in\mathbb{N}.
\end{equation*}
\revision{Here, we used the following multi--index notation $\vec{x}^{\vecsym{\alpha}} = x_1^{\alpha_1} \cdots x_d^{\alpha_{d_1}}$ and $\abs{\vecsym{\alpha}}=\sum_{i=1}^{d_1} \alpha_i$.}
In particular, for each $k,j=1,\dots,d_1$ we will denote
\begin{align*}
	u_k(t) &= \langle \vec{x}^{\vecsym{\alpha}} \rangle(t), \quad \abs{\vecsym{\alpha}} = 1, \ \alpha_i=\delta_{ik}, \ i=1,\dots,d_1\\
	E_{kj}(t) &= \langle \vec{x}^{\vecsym{\alpha}} \rangle(t), \quad \abs{\vecsym{\alpha}} = 2, \ \alpha_i=\delta_{ik}+\delta_{ij}, \ i=1,\dots,d_1.
\end{align*}
the first moment and the second moment, respectively. 
Then, for a symmetric kernel $\mathcal{A}$ we obtain by the integration of the kinetic equation

\begin{lemma} \label{th:kineticMomSymm}
	Let $f(t,\vec{x},\vec{c})$ be the solution of the model~\eqref{eq:kineticEq} with symmetric kernel $\mathcal{A}$, i.e., \revision{$\sigma(t,\vec{x},\vec{c}) = 1$}. Then, the following a--priori estimates hold true:
	$$
		\frac{\d}{\d t} u_k(t) = 0, \quad \frac{\d}{\d t} E_{kk}(t) \leq 0, \quad \abs{E_{kj}(t)} \leq \frac12 \left( E_{kk} + E_{jj} \right)(0)
	$$
	for each $k,j=1,\dots,d_1$.
\end{lemma}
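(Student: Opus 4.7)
My plan is to derive all three estimates directly from the weak form of~\eqref{eq:kineticEq} by inserting polynomial test functions $\varphi(\vec{x},\vec{c})\in\{x_k,\,x_k^2,\,x_k x_j\}$ and then exploiting the symmetry of $\mathcal{A}_{\epsilon_1,\epsilon_2}$ under $(\vec{x},\vec{c})\leftrightarrow(\vec{y},\vec{z})$, which under the hypothesis $\sigma\equiv 1$ reduces the kernel~\eqref{eq:kernelH} to the manifestly symmetric indicator product $\chi_{\epsilon_1}(\norm{\vec{y}-\vec{x}})\chi_{\epsilon_2}(\norm{\vec{z}-\vec{c}})$. Since $\Omega$ is compact these polynomial test functions can be approximated by elements of $C^1_0(\Omega)$, so the weak formulation applies in the limit.

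For the first moment, the choice $\varphi=x_k$ combined with~\eqref{eq:vField} gives $\frac{\d}{\d t} u_k(t)$ as a double integral of $\mathcal{A}_{\epsilon_1,\epsilon_2}(t,\vec{x},\vec{c},\vec{y},\vec{z})\,(y_k-x_k)\,f(t,\vec{y},\vec{z})\,f(t,\vec{x},\vec{c})$ over $\Omega\times\Omega$. Swapping the dummy labels changes the sign of $(y_k-x_k)$ while preserving $\mathcal{A}_{\epsilon_1,\epsilon_2}$ and the product of densities, so the integral equals its own negative and therefore vanishes.

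For the diagonal entries $E_{kk}$ I will test against $\varphi=x_k^2$, producing the integrand $2\,\mathcal{A}_{\epsilon_1,\epsilon_2}\, x_k(y_k-x_k)\, f(t,\vec{y},\vec{z})\,f(t,\vec{x},\vec{c})$. Symmetrizing under the same swap yields the algebraic identity $x_k(y_k-x_k)+y_k(x_k-y_k)=-(y_k-x_k)^2$, so
\begin{equation*}
\frac{\d}{\d t} E_{kk}(t) = -\int_\Omega\int_\Omega \mathcal{A}_{\epsilon_1,\epsilon_2}\,(y_k-x_k)^2\, f(t,\vec{y},\vec{z})\,f(t,\vec{x},\vec{c})\,\d\vec{y}\,\d\vec{z}\,\d\vec{x}\,\d\vec{c} \leq 0,
\end{equation*}
because $\mathcal{A}_{\epsilon_1,\epsilon_2}\geq 0$ and $f\geq 0$. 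This is precisely the continuum analogue of the discrete dissipation behind Lemma~\ref{th:discreteMomSymmetric}.

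The off--diagonal bound does not require the evolution equation at all. Since $f(t,\cdot,\cdot)$ is a probability density on $\Omega$, Cauchy--Schwarz with respect to the measure $f(t,\vec{x},\vec{c})\,\d\vec{x}\,\d\vec{c}$ gives $\abs{E_{kj}(t)}\leq\sqrt{E_{kk}(t)E_{jj}(t)}$, and the elementary AM--GM inequality $\sqrt{ab}\leq\tfrac12(a+b)$ combined with the monotonicity $E_{kk}(t)\leq E_{kk}(0)$, $E_{jj}(t)\leq E_{jj}(0)$ just established yields the stated bound. The main obstacle I expect is purely technical: justifying Fubini and the admissibility of unbounded polynomial $\varphi$ in the weak formulation, which is handled by compactness of $\Omega$ together with the boundedness of $\mathcal{A}_{\epsilon_1,\epsilon_2}$ under $\sigma\equiv 1$.
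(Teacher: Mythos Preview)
Your proposal is correct and follows essentially the same route as the paper: test against $x_k$ and $x_k^2$, then exploit the (anti--)symmetry of the integrand under $(\vec{x},\vec{c})\leftrightarrow(\vec{y},\vec{z})$ to obtain conservation of $u_k$ and the dissipation identity for $E_{kk}$. For the off--diagonal bound the paper uses the pointwise inequality $|x_kx_j|\le\tfrac12(x_k^2+x_j^2)$ directly under the integral rather than your Cauchy--Schwarz plus AM--GM detour, but the two arguments are equivalent and yield the same estimate.
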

\begin{proof}
	For each fixed $k=1,\dots,d_1$ we have
	\begin{align*}
		\frac{\d}{\d t} u_k(t) &= -\int_\Omega x_k \nabla_\vec{x} \cdot \left( \vec{V}_{\epsilon_1,\epsilon_2}(t,\vec{x},\vec{c}) f(t,\vec{x},\vec{c}) \right) \d\vec{x} \d\vec{c} \\&= \int_{\Omega} \int_{\Omega} \chi_{\epsilon_1}(\norm{\vec{y}-\vec{x}}) \chi_{\epsilon_2}(\norm{\vec{z}-\vec{c}}) (y_k-x_k) f(t,\vec{y},\vec{z}) f(t,\vec{x},\vec{c}) \d\vec{y} \d\vec{z} \d\vec{x} \d\vec{c} = 0.
	\end{align*}
	The last term vanishes due to anti--symmetry of the integrand by the interchange of variables $\vec{x} \leftrightarrow \vec{y}$. The second statement follows by observing that for each $k=1,\dots,d_1$ we have
	\begin{align*}
		\frac{\d}{\d t} E_k(t) &= -\int_\Omega x_k^2 \nabla_\vec{x} \cdot \left( \vec{V}_{\epsilon_1,\epsilon_2}(t,\vec{x},\vec{c}) f(t,\vec{x},\vec{c}) \right) \d\vec{x} \d\vec{c}\\
		&= - \int_{\Omega} \int_{\Omega} \chi_{\epsilon_1}(\norm{\vec{y}-\vec{x}}) \chi_{\epsilon_2}(\norm{\vec{z}-\vec{c}}) (y_k-x_k)^2 f(t,\vec{y},\vec{z}) f(t,\vec{x},\vec{c}) \d\vec{y} \d\vec{z} \d\vec{x} \d\vec{c}\leq 0.
	\end{align*}
Hence, we obtain 
	$$
		0 \leq \abs{E_{kj}(t)} \leq \int_{\Omega} \abs{x_k x_j} f(t,\vec{x},\vec{c}) \d\vec{x} \d\vec{c} \leq \frac12 \left( E_{kk} + E_{jj} \right)(t) \leq \frac12 \left( E_{kk} + E_{jj} \right)(0).\qedhere
	$$
\end{proof}

Instead, in the case of global interactions and for a general kernel $\mathcal{A}$ and we have
\begin{corollary} \label{cor:kineticGlobal}
	Assume that $\epsilon_1$ and $\epsilon_2$ are sufficiently large so that $\mathcal{A}(t,\vec{x},\vec{c},\vec{y},\vec{z};\epsilon_1,\epsilon_2)=1$, for all $\vec{x},\vec{y}\in\Omega_1$, $\vec{c},\vec{z}\in\Omega_2$ and $t\geq 0$. Then, for all $k,j=1,\dots,d_1$, the following relations hold true:
	$$
		u_k(t) = u_k(0), \ \forall\,t\geq 0 \quad \mbox{and} \quad E_{kj}(t) \xrightarrow{ t \to \infty } u_k(0)u_j(0).
	$$
\end{corollary}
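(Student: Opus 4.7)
The plan is to observe that under the hypothesis $\mathcal{A}\equiv 1$ the kernel is trivially symmetric, so the conservation $u_k(t)=u_k(0)$ follows immediately from Lemma~\ref{th:kineticMomSymm}. Thus the only real content is the large--time behaviour of $E_{kj}$, which I expect to prove by deriving an explicit linear ODE for it and solving it in closed form.

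The first step is to simplify the velocity field. Since $\mathcal{A}_{\epsilon_1,\epsilon_2}\equiv 1$ and $f(t,\cdot,\cdot)$ is a probability density on $\Omega$, definition~\eqref{eq:vField} collapses to
\begin{equation*}
\vec{V}_{\epsilon_1,\epsilon_2}(t,\vec{x},\vec{c}) = \int_{\Omega} (\vec{y}-\vec{x}) f(t,\vec{y},\vec{z})\,\d\vec{z}\,\d\vec{y} = \vec{u}(t)-\vec{x},
\end{equation*}
so the $k$--th component is $V_k = u_k(0)-x_k$ by the already established conservation of the first moment.

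Next I would test the kinetic equation~\eqref{eq:kineticEq} against the polynomial $x_k x_j$. Integration by parts in $\vec{x}$ (the boundary contributions vanish because $\Omega_1$ is compact and no flux crosses $\partial\Omega_1$, or equivalently by choosing an exhausting sequence of $C^1_0$ test functions) gives
\begin{align*}
\frac{\d}{\d t} E_{kj}(t) &= \int_\Omega \bigl(x_j V_k + x_k V_j\bigr) f(t,\vec{x},\vec{c})\,\d\vec{x}\,\d\vec{c} \\
&= \int_\Omega \bigl[x_j(u_k(0)-x_k) + x_k(u_j(0)-x_j)\bigr] f(t,\vec{x},\vec{c})\,\d\vec{x}\,\d\vec{c} \\
&= 2 u_k(0) u_j(0) - 2 E_{kj}(t).
\end{align*}
This is a scalar linear ODE whose unique solution is $E_{kj}(t) = u_k(0)u_j(0) + \bigl(E_{kj}(0)-u_k(0)u_j(0)\bigr)e^{-2t}$, and letting $t\to\infty$ yields the claimed limit.

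The calculation is essentially routine; the only delicate point is justifying the integration by parts rigorously. Since the statement is formal (the kinetic equation itself was derived formally and the test functions in the weak form are in $C^1_0(\Omega)$), I expect the cleanest path is to regard the computation within the weak formulation, picking $\varphi(\vec{x},\vec{c}) = x_k x_j \psi(\vec{x},\vec{c})$ with $\psi$ a suitable cut--off on $\Omega_1\times\Omega_2$, and then passing to the limit $\psi\to 1$ using that $\Omega_1$ is compact so that $E_{kj}(t)$ is finite for all $t$. That way the boundary terms are seen to disappear and the ODE above is obtained as an identity between moments, giving both claims of the corollary.
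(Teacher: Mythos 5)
Your proposal is correct and follows essentially the same route as the paper: both reduce the statement to the linear ODE $\frac{\d}{\d t}E_{kj}(t) = -2E_{kj}(t) + 2u_k(0)u_j(0)$ (the paper via a general $p$--th moment identity specialized to $p=1,2$, you via Lemma~\ref{th:kineticMomSymm} for $p=1$ and a direct computation with $\vec{V}=\vec{u}(0)-\vec{x}$ for $p=2$) and then solve it explicitly to obtain the exponential relaxation to $u_k(0)u_j(0)$. Your extra care with the cut--off justification of the integration by parts is a harmless refinement of the paper's formal computation.
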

\begin{proof}
	Multiplying~\eqref{eq:kineticEq} by $\vec{x}^{\vecsym{\alpha}}$ with $\abs{\vecsym{\alpha}}=p\in\mathbb{N}$, and integrating over $\Omega$, we compute
	$$
		\frac{\d}{\d t} \langle \vec{x}^{\vecsym{\alpha}} \rangle(t) = -p \langle \vec{x}^{\vecsym{\alpha}} \rangle(t) + \sum_{\ell=1}^{d_1} \alpha_\ell u_\ell(t) \langle \vec{x}^{\vecsym{\alpha}^{(\ell)}} \rangle(t) 
	$$
	where $\vecsym{\alpha}^{(\ell)} = \vecsym{\alpha} - \vec{e}_\ell$ with $\vec{e}_\ell$ being the $\ell$--th element of the standard basis vector of $\Rdone$ and $\abs{\vecsym{\alpha}^{(\ell)}} = p-1$. Then, for $p=1$ we still have have conservation of the first moment since we obtain
	$$
		\frac{\d}{\d t} u_k(t) = -u_k(t) + u_k(t) = 0, \quad k=1,\dots,d_1.
	$$
	For $p=2$ we have
	$$
		\frac{\d}{\d t} E_{kj}(t) = -2E_{kj}(t) + 2 u_k(t)u_j(t) = -2E_{kj}(t) + 2 u_k(0)u_j(0), \quad k,j=1,\dots,d_1
	$$
	and therefore
	\revision{
	$$
		E_{kj}(t) = E_{kj}(0) e^{-2t} + u_k(0)u_j(0) \left( 1 - e^{-2t} \right) \xrightarrow{ t \to \infty } u_k(0)u_j(0), \quad k,j=1,\dots,d_1.\qedhere
	$$
	}
\end{proof}

Observe that, compared to the local interaction case analyzed in Theorem~\ref{th:kineticMomSymm} where in general the energy decay property of all second order moments is not guaranteed, in global interactions also the mixed second order moments decay in time. In other words, while in the local interaction model only variances go to zero, in the global interaction model both variances and covariances go to zero in the large time behavior.


Lemma~\ref{th:kineticMomSymm} and Corollary~\ref{cor:kineticGlobal} suggest that for $t\to \infty$ any initial distribution will tend to a stationary distribution that is concentrated on a finite number of points in $\Omega_1$ and represents therefore clusters. Hence, in the following we investigate the asymptotic distribution of the model~\eqref{eq:kineticEq}.

\begin{lemma} \label{th:steadystateEpsSmall}
	Let $\epsilon_1$ and $\epsilon_2$ be arbitrary positive bounded confidence levels. Then, the distribution
	\begin{equation} \label{eq:finfty}
	f^\infty(\vec{x},\vec{c}) = \sum_{k=1}^{n_1} f_k \delta(\vec{x}-\vec{x}_k) \sum_{\ell=1}^{n_2(k)} \delta(\vec{c}-\vec{c}_{k_\ell})
	\end{equation}
	with $f_k>0$ and $\sum_{k=1}^{n_1} \sum_{\ell=1}^{n_2(k)} f_k=1$ is a weak stationary solution of the model~\eqref{eq:kineticEq} if and only if either $\norm{\vec{x}_i-\vec{x}_k}_{\Rdone}>\epsilon_1$ for all $i\neq k$ or $\norm{\vec{c}_{i_j}-\vec{c}_{k_\ell}}_{\Rdtwo}>\epsilon_2$ for all $i\neq k$, for all $j,\ell$, or both hold true .
\end{lemma}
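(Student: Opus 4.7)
The plan is to translate weak stationarity of the ansatz~\eqref{eq:finfty} into a finite set of algebraic conditions at the support points, and then analyze when those conditions are satisfied. A time--independent $f^\infty$ solves~\eqref{eq:kineticEq} in the weak sense if and only if
$$\int_\Omega \nabla_{\vec{x}}\varphi(\vec{x},\vec{c}) \cdot \vec{V}_{\epsilon_1,\epsilon_2}(\vec{x},\vec{c})\,f^\infty(\vec{x},\vec{c})\,\d\vec{x}\,\d\vec{c} = 0$$
for every $\varphi \in C_0^1(\Omega)$, with $\vec{V}_{\epsilon_1,\epsilon_2}$ given by~\eqref{eq:vField} and constructed using $f^\infty$ itself. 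Substituting~\eqref{eq:finfty} collapses the left--hand side to the finite sum
$$\sum_{k=1}^{n_1}\sum_{\ell=1}^{n_2(k)} f_k\,\nabla_{\vec{x}}\varphi(\vec{x}_k,\vec{c}_{k_\ell})\cdot \vec{V}_{\epsilon_1,\epsilon_2}(\vec{x}_k,\vec{c}_{k_\ell}) = 0.$$
Since the support points $(\vec{x}_k,\vec{c}_{k_\ell})$ are pairwise distinct by construction, one can build localized $C^1$ bumps prescribing $\nabla_{\vec{x}}\varphi$ independently at each such point; thus the sum vanishes for all admissible $\varphi$ precisely when $\vec{V}_{\epsilon_1,\epsilon_2}(\vec{x}_k,\vec{c}_{k_\ell}) = 0$ for every $k,\ell$.

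Next I would compute this drift explicitly by inserting $f^\infty$ into~\eqref{eq:vField}:
$$\vec{V}_{\epsilon_1,\epsilon_2}(\vec{x}_k,\vec{c}_{k_\ell}) = \frac{1}{\sigma(\vec{x}_k,\vec{c}_{k_\ell})}\sum_{i=1}^{n_1}\sum_{j=1}^{n_2(i)} f_i\,\chi_{\epsilon_1}(\norm{\vec{x}_i-\vec{x}_k}_{\Rdone})\,\chi_{\epsilon_2}(\norm{\vec{c}_{i_j}-\vec{c}_{k_\ell}}_{\Rdtwo})\,(\vec{x}_i-\vec{x}_k),$$
where $\sigma(\vec{x}_k,\vec{c}_{k_\ell})>0$ because the self--contribution $i=k$, $j=\ell$ already supplies $f_k>0$. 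The sufficient direction is now immediate: under the separation hypothesis every summand with $i\neq k$ is killed by at least one indicator, while all $i=k$ summands carry the vanishing factor $\vec{x}_k-\vec{x}_k=0$. Hence $\vec{V}_{\epsilon_1,\epsilon_2}(\vec{x}_k,\vec{c}_{k_\ell})=0$ and $f^\infty$ is a weak stationary solution.

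For the converse I would argue by contrapositive. Suppose the separation condition fails, so that for some $(k,\ell)$ there exists at least one pair $(i,j)$ with $i\neq k$ for which both indicators are unity. Then the active set $\mathcal{I}_{k,\ell}:=\{i\neq k : \exists\,j,\ \norm{\vec{x}_i-\vec{x}_k}_{\Rdone}\leq\epsilon_1 \text{ and } \norm{\vec{c}_{i_j}-\vec{c}_{k_\ell}}_{\Rdtwo}\leq\epsilon_2\}$ is nonempty, and each active $i$ contributes the positively weighted nonzero displacement $f_i(\vec{x}_i-\vec{x}_k)$ to the sum defining $\vec{V}_{\epsilon_1,\epsilon_2}(\vec{x}_k,\vec{c}_{k_\ell})$. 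The main obstacle is then excluding accidental cancellations among several active displacements in $\Rdone$: the cleanest path is to read the iff pointwise in $(k,\ell)$, or under a mild genericity assumption on the cluster positions, so that no fine--tuned symmetric configuration produces a spurious zero. A minor additional technicality is the boundary behavior of the discontinuous indicators $\chi_{\epsilon_1},\chi_{\epsilon_2}$ at equality, which is absorbed by the strict inequalities $>\epsilon_1,\,>\epsilon_2$ appearing in the statement of the lemma.
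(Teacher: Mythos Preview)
Your approach is essentially the same as the paper's: both reduce weak stationarity to a finite sum over the support points and then argue each direction by checking whether the drift $\vec{V}_{\epsilon_1,\epsilon_2}(\vec{x}_k,\vec{c}_{k_\ell})$ vanishes. Your sufficient direction matches the paper's exactly.

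For the converse, the paper is actually \emph{less} careful than you are. It argues by contradiction, picks a single violating pair $(i,k)$ together with indices $(j,\ell)$, and writes the weak form as just the two cross terms
\[
- f_k f_i \frac{(\vec{x}_k-\vec{x}_i)}{\sigma(\vec{x}_k,\vec{c}_{k_\ell})} \cdot \nabla_\vec{x} \varphi\big|_{\vec{x}=\vec{x}_k}
\;-\; f_i f_k \frac{(\vec{x}_i-\vec{x}_k)}{\sigma(\vec{x}_i,\vec{c}_{i_j})} \cdot \nabla_\vec{x} \varphi\big|_{\vec{x}=\vec{x}_i}\;\neq\;0,
\]
declaring this nonzero by inspection. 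Since the two gradients are evaluated at the distinct spatial points $\vec{x}_k\neq\vec{x}_i$, a localized $\varphi$ does make this work \emph{provided} $(i,j)$ is the only active neighbor of $(k,\ell)$; the paper does not discuss the case of several active indices at all. In other words, the cancellation issue you flag is a genuine gap in the ``only if'' direction, and the paper simply does not address it. Your suggestion to read the separation condition pointwise in $(k,\ell)$, or to invoke a genericity assumption on the cluster positions, is a reasonable way to close it; neither appears in the paper's argument.
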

\begin{proof}
	Let us assume that at least one of $\norm{\vec{x}_i-\vec{x}_k}_{\Rdone}>\epsilon_1$ for all $i\neq k$ and $\norm{\vec{c}_{i_j}-\vec{c}_{k_\ell}}_{\Rdtwo}>\epsilon_2$ for all $i\neq k$, for all $j,\ell$, is verified and prove that $f^\infty$ is the asymptotic distribution of the equation~\eqref{eq:kineticEq}, i.e.~given a test function $\varphi\in C_0^1(\Omega)$
	$$
		\int_{\Omega} \varphi(\vec{x},\vec{c}) \nabla_\vec{x} \cdot \left(\vec{V}_{\epsilon_1,\epsilon_2}(t,\vec{x},\vec{c}) f^\infty(\vec{x},\vec{c})\right) \d\vec{x} \d\vec{c} = 0.
	$$
	We obtain 
	\begin{align*}
	\int_{\Omega} \varphi(\vec{x},\vec{c}) \nabla_\vec{x} \cdot \left(\vec{V}_{\epsilon_1,\epsilon_2}(t,\vec{x},\vec{c}) f^\infty(\vec{x},\vec{c})\right) \d\vec{x} \d\vec{c} &= -\sum_{k=1}^{n_1} f_k \sum_{\ell=1}^{n_2(k)} \vec{V}(t,\vec{x}_k,\vec{c}_{k_\ell};\epsilon_1,\epsilon_2) \cdot \nabla_\vec{x} \varphi(\vec{x},\vec{c})|_{\vec{x}=\vec{x}_k} \\
	&= -\sum_{k=1}^{n_1} f_k^2 \sum_{\ell=1}^{n_2(k)} \frac{(\vec{x}_k-\vec{x}_k)}{\sigma(\vec{x}_k,\vec{c}_{k_\ell})} \cdot \nabla_\vec{x} \varphi(\vec{x})|_{\vec{x}=\vec{x}_k} = 0.
	\end{align*}
	Conversely,  assume that $f^\infty$ as in~\eqref{eq:finfty} is the steady state of~\eqref{eq:kineticEq}. Assume by contradiction that there exist $k,i\in\{1\dots,n_1\}$ such that $\norm{\vec{x}_k-\vec{x}_i}_{\Rdone}\leq\epsilon_1$ and $\ell\in\{1\dots,n_2(k)\}$, $j\in\{1\dots,n_2(i)\}$ such that $\norm{\vec{c}_{i_j}-\vec{c}_{k_\ell}}_{\Rdtwo}\leq\epsilon_2$. Then, using similar computations as before we obtain 
	\begin{align*}
	\int_{\Omega} \varphi(\vec{x},\vec{c}) \nabla_\vec{x} \cdot \left(\vec{V}_{\epsilon_1,\epsilon_2}(t,\vec{x},\vec{c}) f^\infty(\vec{x},\vec{c})\right) \d\vec{x} \d\vec{c} =& - f_k f_i \frac{(\vec{x}_k-\vec{x}_i)}{\sigma(\vec{x}_k,\vec{c}_{k_\ell})} \cdot \nabla_\vec{x} \varphi(\vec{x},\vec{c})|_{\vec{x}=\vec{x}_k} \\ &- f_i f_k \frac{(\vec{x}_i-\vec{x}_k)}{\sigma(\vec{x}_i,\vec{c}_{i_j})} \cdot \nabla_\vec{x} \varphi(\vec{x},\vec{c})|_{\vec{x}=\vec{x}_i} \neq 0
	\end{align*}
	which contradicts the hypothesis. 
\end{proof}
\begin{lemma} \label{th:steadystateEpsLarge}
	Assume that $\epsilon_1$ and $\epsilon_2$ are sufficiently large so that $\mathcal{A}(t,\vec{x},\vec{c},\vec{y},\vec{z};\epsilon_1,\epsilon_2)=1$, for all $\vec{x},\vec{y}\in\Omega_1$, $\vec{c},\vec{z}\in\Omega_2$ and $t\geq 0$. 
	Then, the distribution
	$$
		f^\infty(\vec{x},\vec{c}) = \delta(\vec{x}-\vec{\bar{x}}) \tilde{f}^c(0,\vec{c}) = \prod_{k=1}^{d_1} \delta(x_k-\bar{x}_k) \tilde{f}^c(0,\vec{c})
	$$
	with $\tilde{f}^c(0,\vec{c}) = \int_{\Omega_1} f_0(\vec{x},\vec{c}) \d\vec{x}$ is a weak stationary solution of the model~\eqref{eq:kineticEq} if and only if $\bar{x}_k = u_k(0)$.
\end{lemma}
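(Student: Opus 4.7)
The plan is to mirror the direct computation used in the proof of Lemma~\ref{th:steadystateEpsSmall} and then to invoke the moment identities of Corollary~\ref{cor:kineticGlobal} to pin down $\vec{\bar{x}}$. First I would substitute $f^\infty$ into the velocity field~\eqref{eq:vField}. Since $\epsilon_1,\epsilon_2$ are so large that $\mathcal{A}\equiv 1$, and since $\int_{\Omega_2}\tilde{f}^c(0,\vec{z})\,\d\vec{z}=\int_\Omega f_0\,\d\vec{x}\,\d\vec{c}=1$, the delta mass in $\vec{y}$ collapses the integral and one obtains $\vec{V}_{\epsilon_1,\epsilon_2}(\vec{x},\vec{c})=\vec{\bar{x}}-\vec{x}$.

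Next I would test the weak stationarity condition against $\varphi\in C_0^1(\Omega)$: integration by parts in $\vec{x}$ reduces it to verifying
\begin{equation*}
\int_\Omega \nabla_\vec{x}\varphi(\vec{x},\vec{c})\cdot(\vec{\bar{x}}-\vec{x})\,\delta(\vec{x}-\vec{\bar{x}})\,\tilde{f}^c(0,\vec{c})\,\d\vec{x}\,\d\vec{c}=0,
\end{equation*}
which holds because evaluating the delta at $\vec{x}=\vec{\bar{x}}$ makes the drift factor vanish identically. I note that this computation works for any $\vec{\bar{x}}$, so the location of the cluster has to be singled out by a separate argument.

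To identify $\vec{\bar{x}}$ I would compute the first moment of $f^\infty$ directly, obtaining $\int_\Omega x_k f^\infty\,\d\vec{x}\,\d\vec{c}=\bar{x}_k$ (using the unit mass of $\tilde{f}^c(0,\cdot)$). Since $f^\infty$ is intended as the $t\to\infty$ limit of the dynamics launched from $f_0$, and since the first moment is conserved by Corollary~\ref{cor:kineticGlobal}, the asymptotic first moment must coincide with $u_k(0)$, giving the necessary condition $\bar{x}_k=u_k(0)$. Sufficiency (that this choice really realises the asymptotic profile of~\eqref{eq:kineticEq}) follows from the same corollary: the decay $E_{kj}(t)\to u_k(0)u_j(0)$ forces the variances $E_{kk}(t)-u_k(t)^2$ to zero, so $f(t,\cdot,\vec{c})$ concentrates at $\vec{u}(0)$ in $\vec{x}$, while preservation of the $\vec{c}$--marginal $\tilde{f}^c$ fixes the second factor of~\eqref{eq:finfty}.

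The delicate point is the ``only if'' direction: weak stationarity alone does not pin down $\vec{\bar{x}}$, because the Dirac mass in $f^\infty$ annihilates the drift $\vec{\bar{x}}-\vec{x}$ for any choice of the cluster position. The main task is therefore to combine stationarity with the first--moment conservation established in Corollary~\ref{cor:kineticGlobal} so as to extract $\vec{\bar{x}}=(u_1(0),\dots,u_{d_1}(0))$.
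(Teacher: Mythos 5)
Your argument reaches the right conclusion, but it takes a genuinely different route from the paper's, and the difference is instructive. The paper never evaluates the velocity field \eqref{eq:vField} on $f^\infty$ itself: it keeps $\vec{V}$ generated by the evolving solution and uses the conservation of the first moment (Corollary~\ref{cor:kineticGlobal}) \emph{inside} the velocity field, writing $\vec{V}_{\epsilon_1,\epsilon_2}(t,\vec{x},\vec{c}) = \int_{\Omega} \vec{y}\, f(t,\vec{y},\vec{z})\,\d\vec{z}\,\d\vec{y} - \vec{x} = \int_{\Omega} \vec{y}\, f_0(\vec{y},\vec{z})\,\d\vec{z}\,\d\vec{y} - \vec{x}$. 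With this reading, substituting $f^\infty$ into the weak form and integrating by parts reduces the stationarity condition to $\int_{\Omega_2}\tilde{f}^c(0,\vec{c})\,\bigl(\vec{\bar{x}}-\int_{\Omega}\vec{y} f_0(\vec{y},\vec{z})\,\d\vec{z}\,\d\vec{y}\bigr)\cdot\nabla_\vec{x}\varphi(\vec{x},\vec{c})|_{\vec{x}=\vec{\bar{x}}}\,\d\vec{c}$, which vanishes for all test functions if and only if $\bar{x}_k=u_k(0)$; both directions follow from this single computation, so the ``delicate point'' you flag (that a self-consistent drift $\vec{\bar{x}}-\vec{x}$ is annihilated by the Dirac mass for \emph{any} $\vec{\bar{x}}$) simply does not arise in the paper's proof --- your observation is correct, but it is an artifact of plugging $f^\infty$ into $\vec{V}$ rather than the solution issued from $f_0$. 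Your substitute for the ``only if'' direction --- reading $f^\infty$ as the $t\to\infty$ profile, passing the conserved first moment to the limit, and obtaining sufficiency from the decay $E_{kk}(t)-u_k(t)^2\to 0$ together with preservation of the $\vec{c}$--marginal --- is sound (on the compact domain $\Omega$ the moments do pass to the weak limit, and Cauchy--Schwarz turns vanishing variance into concentration), and it actually proves the stronger statement that $f(t)$ converges weakly to the stated profile; the price is an extra dynamical argument, whereas the paper's proof stays entirely at the level of the stationarity identity with the velocity field frozen at the conserved initial mean.
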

\begin{proof}
		Substituting the expression of $f^\infty$ in the weak form of the kinetic equation, integrating by parts and using the fact that for $\epsilon_1$ and $\epsilon_2$ large enough so that
	$$
		\vec{V}_{\epsilon_1,\epsilon_2}(t,\vec{x},\vec{c}) = \int_{\Omega} \vec{y} f(t,\vec{y},\vec{z}) \d\vec{z} \d\vec{y} - \vec{x} = \int_{\Omega} \vec{y} f_0(\vec{y},\vec{z}) \d\vec{z} \d\vec{y} - \vec{x}
	$$
	we obtain
	\begin{align*}
		\int_{\Omega} \varphi(\vec{x},\vec{c}) \nabla_\vec{x} \cdot \left(\vec{V}_{\epsilon_1,\epsilon_2}(t,\vec{x},\vec{c}) f^\infty(\vec{x},\vec{c})\right) \d\vec{x} \d\vec{c} = \int_{\Omega_2} \tilde{f}(\vec{c}) \left( \vec{\bar{x}} - \int_{\Omega} \vec{y} f_0(\vec{y},\vec{z}) \d\vec{z} \d\vec{y} \right) \cdot \nabla_\vec{x} \varphi(\vec{x},\vec{c})|_{\vec{x}=\vec{\bar{x}}} \d\vec{c}
	\end{align*}
	which is zero if  $\vec{\bar{x}} = \int_{\Omega} \vec{y} f_0(\vec{y},\vec{z}) \d\vec{z} \d\vec{y}$. 
\end{proof}

\begin{remark} \label{rem:hkConsistency}
	As direct consequence of Theorem~\ref{th:steadystateEpsSmall} and Theorem~\ref{th:steadystateEpsLarge} we have that taking $\epsilon_2$ sufficiently large or an initial distribution $f_0(\vec{x},\vec{c})$ being atomic with respect the second variable, the same quantized steady--state of the kinetic formulation of the microscopic model~\eqref{eq:hkModel1D} are preserved, cf.~\cite{CanutoFagnaniTilli2008}.
\end{remark}

\begin{remark}
	In Theorem~\ref{th:steadystateEpsSmall} the number of clusters $n_1$, as well the values $f_j$ and the positions $\vec{x}_j$ of the clusters, are functions of the initial distribution $f_0(\vec{x},\vec{c})$ and of the bounded confidence levels $\epsilon_1$ and $\epsilon_2$. As pointed--out also in~\cite{CanutoFagnaniTilli12}, in general it is not possible to predict the number of Dirac deltas in the asymptotic configuration from a given initial distribution. However, for $\epsilon_2$ sufficiently large and assuming $\Omega = [0,1]^d$, the number of clusters is $1\leq \tilde{n} \leq \lfloor \frac{1}{\epsilon_1^d} \rfloor$. This consideration is also observed at the Boltzmann level, see~\cite{BorraLorenzi2013}.
\end{remark}

\section{Numerical experiments and applications} \label{sec:examples1}

The theoretical results on the asymptotic behavior of the mean--field equation~\eqref{eq:kineticEq} introduced in the previous section are here also numerically investigated. Moreover, we show the efficiency of the model as technique to solve realistic data--clustering problems and to this end we focus on applications to the field of shape detection and image segmentation.

In order to efficiently solve the kinetic model~\eqref{eq:kineticEq} we employ the Mean Field Interaction Algorithm introduced in~\cite{AlbiPareschi2013} which is based on random subset evaluation of the kernel term $\mathcal{A}_{\epsilon_1,\epsilon_2}(t,\vec{x},\vec{c},\vec{y},\vec{z})$ given in~\eqref{eq:kernelH}. The algorithm used in the numerical experiments is summarized by the steps in Algorithm~\ref{alg:meanfield} for a time interval $[0,T]$ discretized in $k_{tot}$ subintervals of size $\Delta t$. The computational cost of the algorithm is $O=(MN)$, where $M$ is the size of the subset of interacting particles, and for $M=N$ we obtain the explicit Euler scheme for the original $N$ particle system~\eqref{eq:generalModel} whose cost is $O(N^2)$. For further details on the Mean Field Interaction Algorithm we refer to~\cite{AlbiPareschi2013}. 

\begin{algorithm}[t!]
	\begin{algorithmic}[1]
		\STATE Given $N$ sample pairs $(x_i^0,c_i^0)$, with $i=1,\dots,N$ computed from the initial distribution $f^0(x,c)$ and $M\leq N$;
		\FOR{$k=0$ \TO $k_{tot}-1$}
		\FOR{$i=1$ \TO $N$}
		\STATE sample $M$ data $j_1,\dots,j_M$ uniformly without repetition among all data;
		\STATE compute
		$$
		\revision{\bar{A}(x_i^k,c_i^0) = \sum_{\ell = 1}^M \mathcal{A}_{\epsilon_1,\epsilon_2}(x_i^k,c_i^0,x_{j_\ell}^k,c_{j_\ell}^0), \quad \bar{x}_i^k = \sum_{\ell=1}^M \frac{\mathcal{A}_{\epsilon_1,\epsilon_2}(x_i^k,c_i^0,x_{j_\ell}^k,c_{j_\ell}^0)}{\bar{A}(x_i^k,c_i^0)}x_{j_\ell}}
		$$
		\STATE compute the data change
		$$
		x_i^{k+1} = x_i^k \left(1-\Delta t \bar{A}(x_i^k,c_i^0)\right) + \Delta t \bar{A}(x_i^k,c_i^0) \bar{x}_i^k
		$$
		\ENDFOR
		\ENDFOR
	\end{algorithmic}
	\caption{Mean Field Interaction Algorithm for the kinetic equation~\eqref{eq:kineticEq}.}
	\label{alg:meanfield}
\end{algorithm}

\subsection{Numerical steady--states and moment evolution} \label{sec:numericalProp}

We use the Mean Field Interaction Algorithm to numerically investigate the properties of the kinetic model proved in the previous section. We analyze two typical situations which lead to the two applications we show later in this section.

\subsubsection{Constant static feature.}
First, we consider an initial distribution being the uniform distribution with respect to $\vec{x}$ and a constant distribution along the static feature variable $\vec{c}$, i.e.~$f_0(\vec{x},\vec{c}) = \chi_{[0,1]^{d_1}}(\vec{\vec{x}})$. This choice is obviously also consistent with considering the bounded confidence level $\epsilon_2$ very large. We show that, as discussed in Remark~\ref{rem:hkConsistency}, equation~\eqref{eq:kineticEq} provides the same steady--states of the Hegselmann--Krause model.

\paragraph{One--dimension.} The numerical steady--states of the mean--field equation~\eqref{eq:kineticEq} are first computed for the one--dimensional case and compared to the steady--states of the microscopic model~\eqref{eq:generalModel}. Observe that, under the assumption of an initial constant distribution along $\vec{c}$, the asymptotic behavior of~\eqref{eq:generalModel} is the classical one prescribed by the one--dimensional Hegselmann--Krause model~\eqref{eq:hkModel1D}.
The evolution in time of the first and second moment is also provided.

\begin{figure}[t!]
	\centering
		\includegraphics[width=0.49\textwidth]{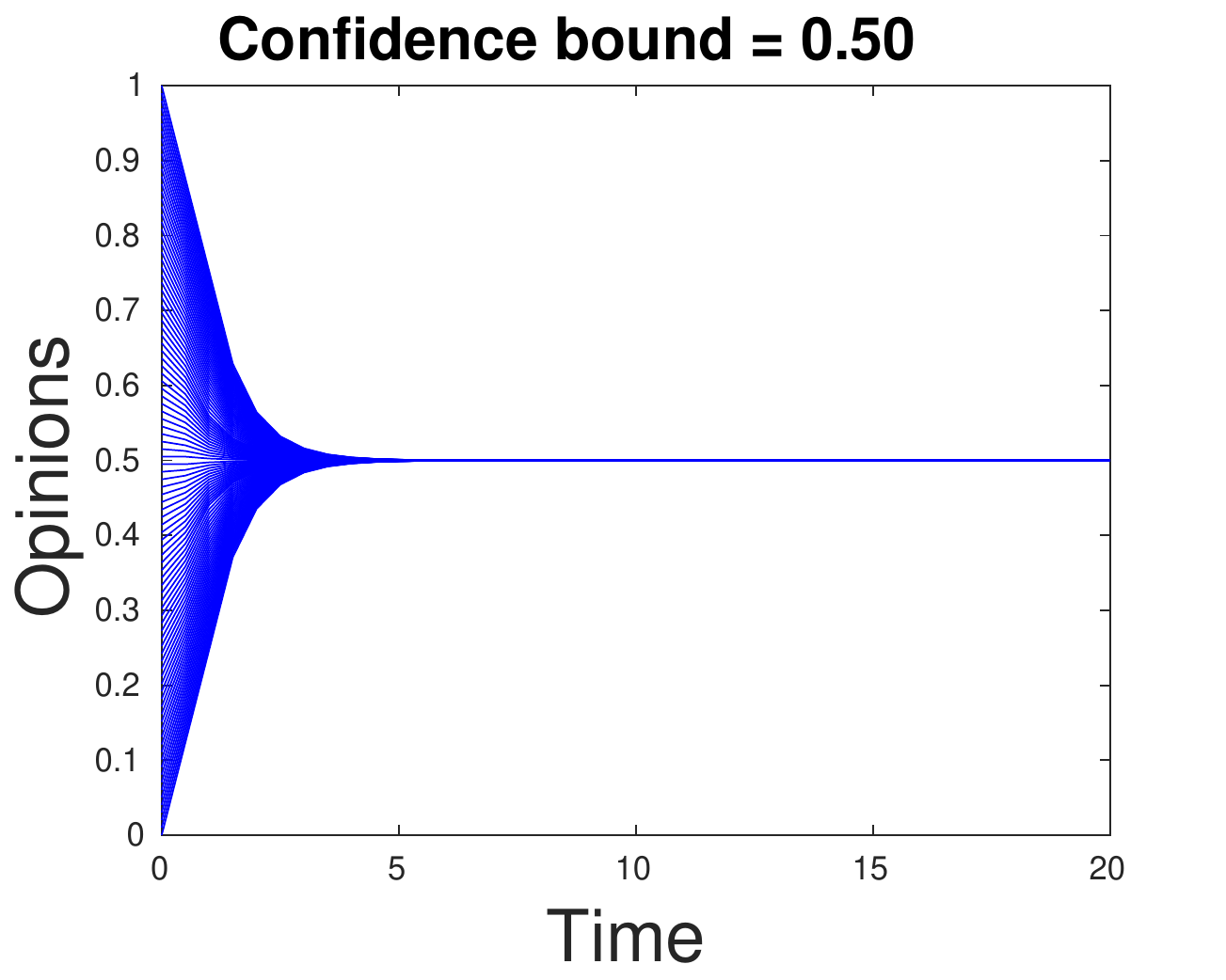}
	\includegraphics[width=0.49\textwidth]{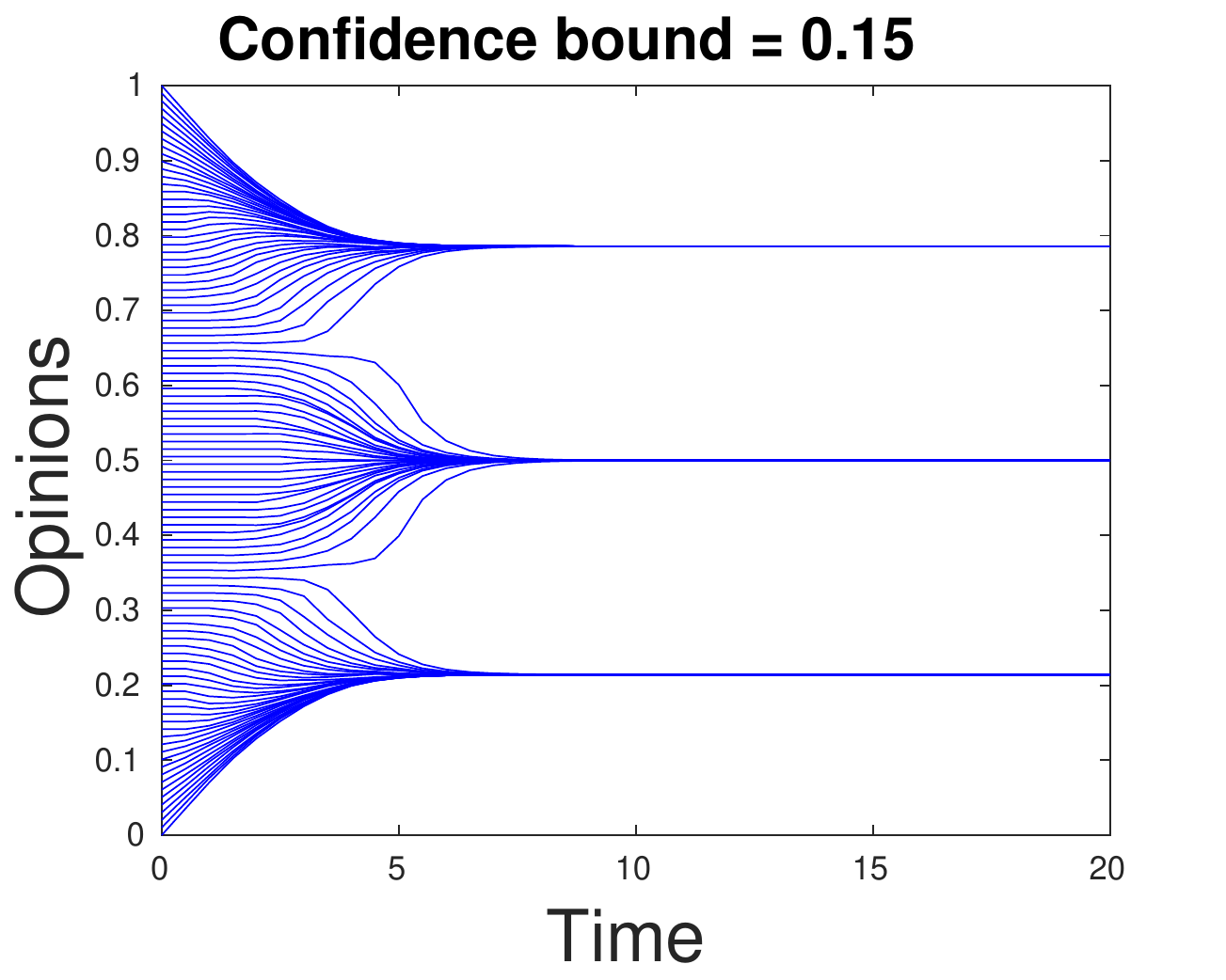}
	\includegraphics[width=0.49\textwidth]{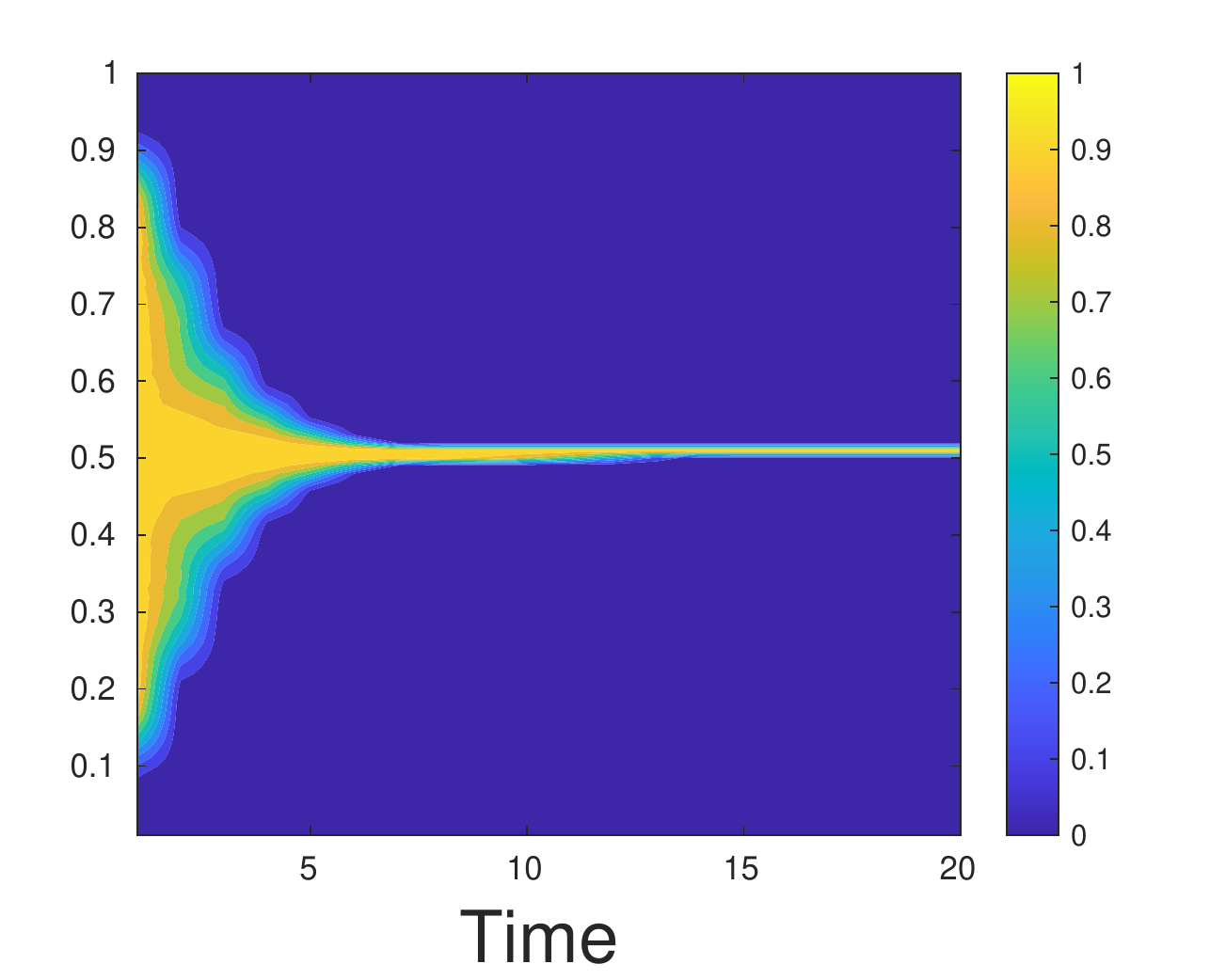}
	\includegraphics[width=0.49\textwidth]{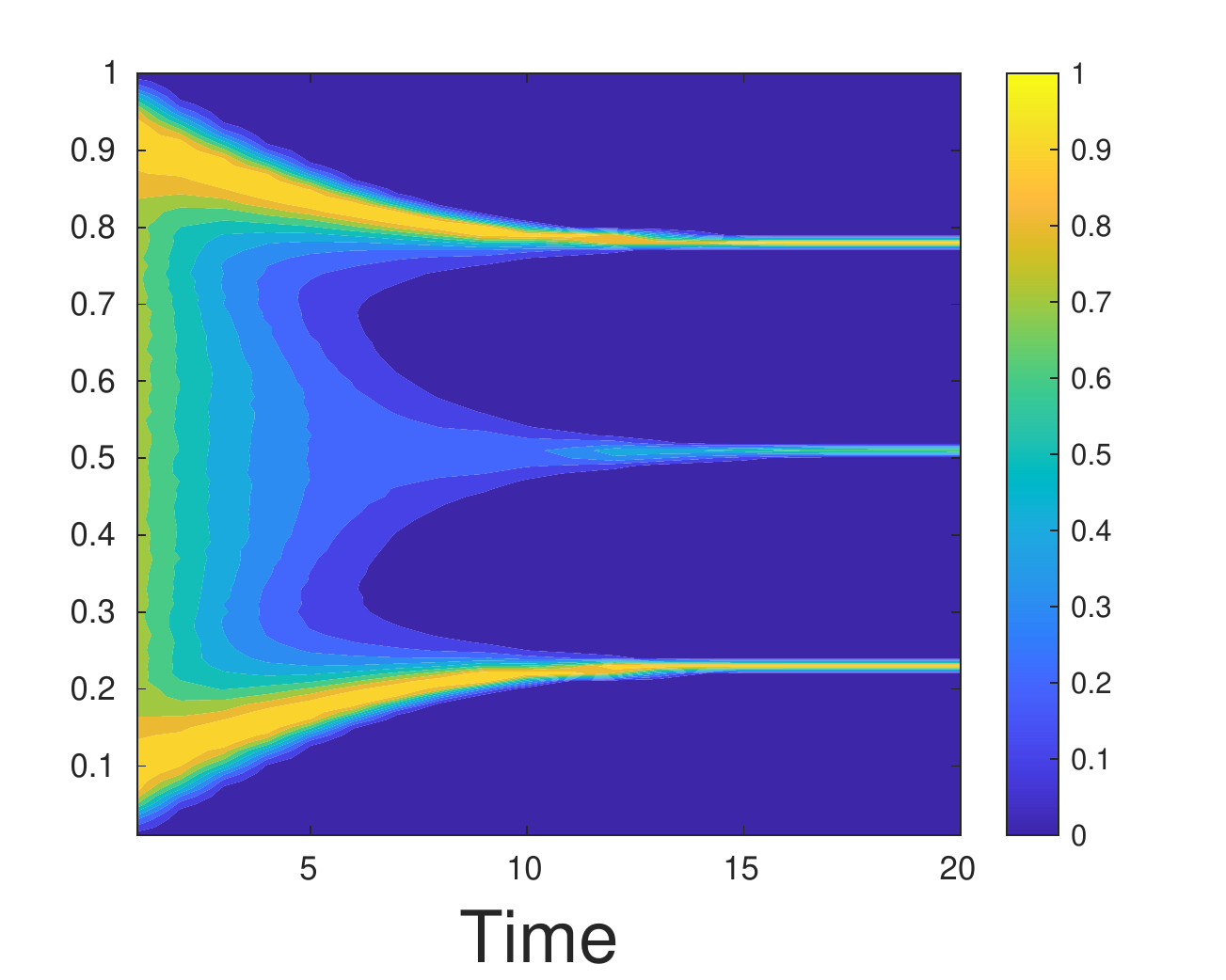}
	\caption{Trend to the steady--state of the one--dimensional Hegselmann--Krause model~\eqref{eq:hkModel1D} with $n=100$ agents equally spaced at initial time and non--symmetric interactions (top row) and of the mean--field model~\eqref{eq:kineticEq} computed with Algorithm~\ref{alg:meanfield} (bottom row) up to final time \revision{$T=20$}. Left panels show the case for \revision{$\epsilon_1=0.5$}, the right panels show the case for $\epsilon_1=0.15$.\label{fig:onedimUniformSS}}
\end{figure}

In Figure~\ref{fig:onedimUniformSS} we show the steady--state provided by the mean--field kinetic model~\eqref{eq:kineticEq} for the one--dimensional initial uniform distribution on $[0,1]$. The final time is \revision{$T=20$} and the time step is $\Delta t=0.5$. We consider $N=5\times 10^5$ particles in the Monte Carlo method so that we reduce error due to the sampling. The number of interacting particles is taken as $M=10$. We show the results for two values of the confidence bound, \revision{$\epsilon_1=0.5$} in the left panel and $\epsilon_1=0.15$ in the right panel. The evolution of the distributions up to final time is showed by normalizing with respect to the maximum value at each fixed time. For \revision{$\epsilon_1=0.5$} we observe the formation of a consensus state in large time behavior. In fact, the final distribution is a Dirac delta centered in the initial value of the first moment. For $\epsilon_1=0.15$, instead, three clusters arise at equilibrium, 
similarly the classical Hegselmann--Krause model for the same confidence bounds and equally distributed initial data.

\begin{figure}[t!]
	\centering
	\includegraphics[width=0.49\textwidth]{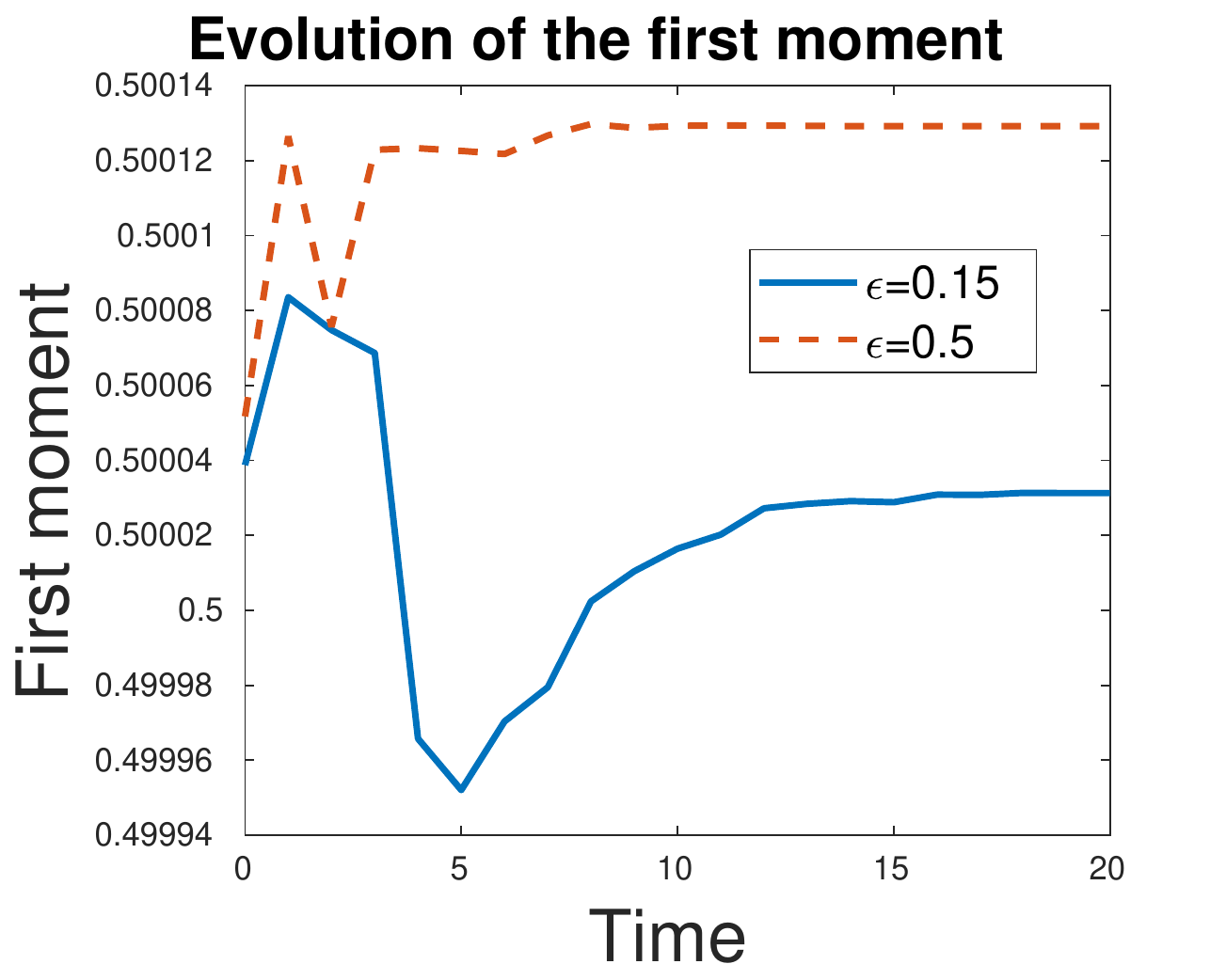}
	\includegraphics[width=0.49\textwidth]{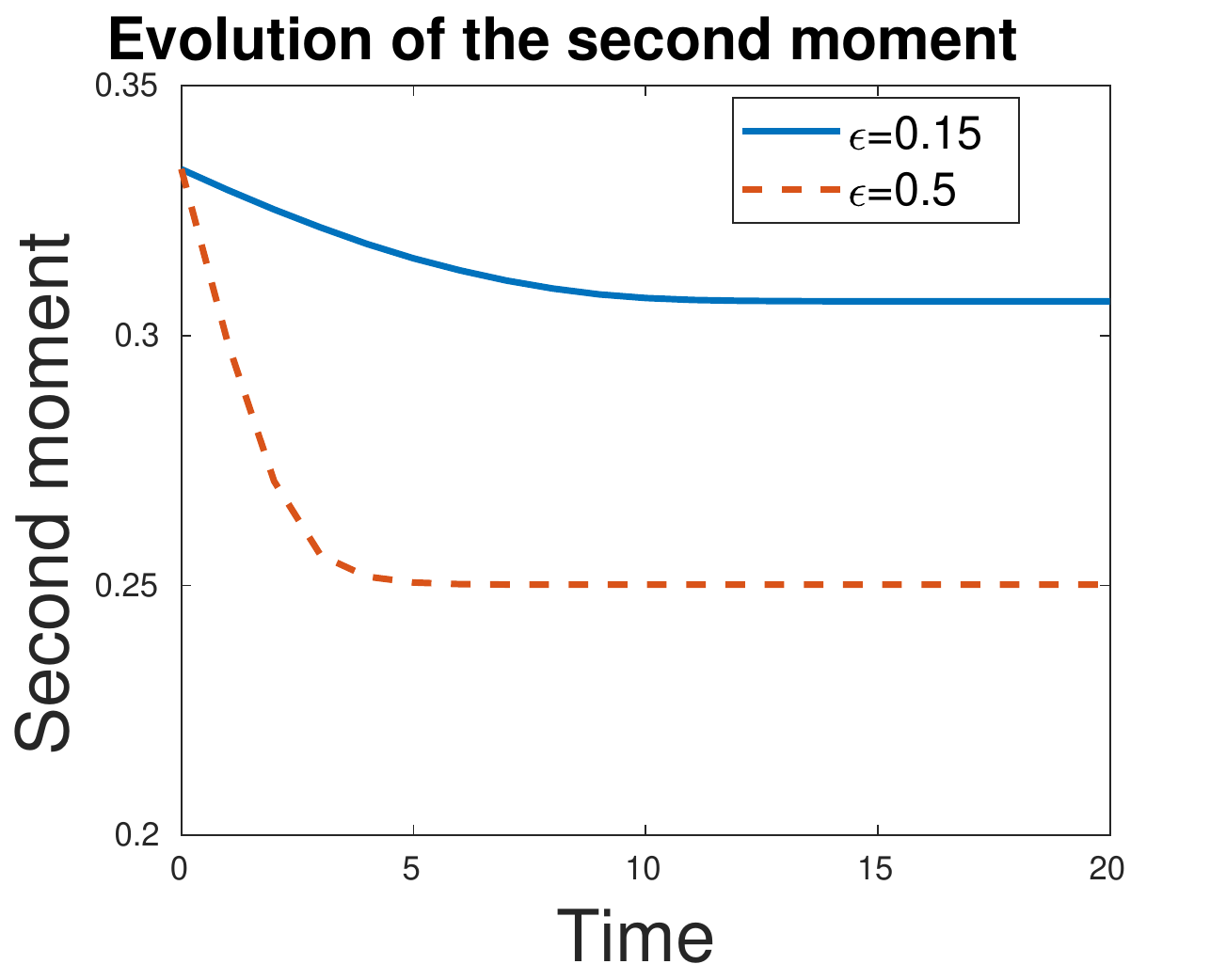}
	\caption{Evolution in time of the first moment (left) and of the second moment (right) for the two values of the bounded confidence level \revision{$\epsilon_1=0.5$} (dashed lines) and $\epsilon_1=0.15$ (solid lines).\label{fig:onedimUniformMom}}
\end{figure}

As observed in Figure~\ref{fig:onedimUniformMom}, for both values of the bounded confidence level, we have conservation of the first moment and energy dissipation in time. Although we are using a non--symmetric model, the first moment is preserved during the time evolution, with deviation of order \revision{$10^{-4}$} from the initial value, since the initial distribution is symmetric. The fluctuations in the first moment are due to the stochastic method.

\begin{figure}[t!]
	\centering
	\includegraphics[width=0.49\textwidth]{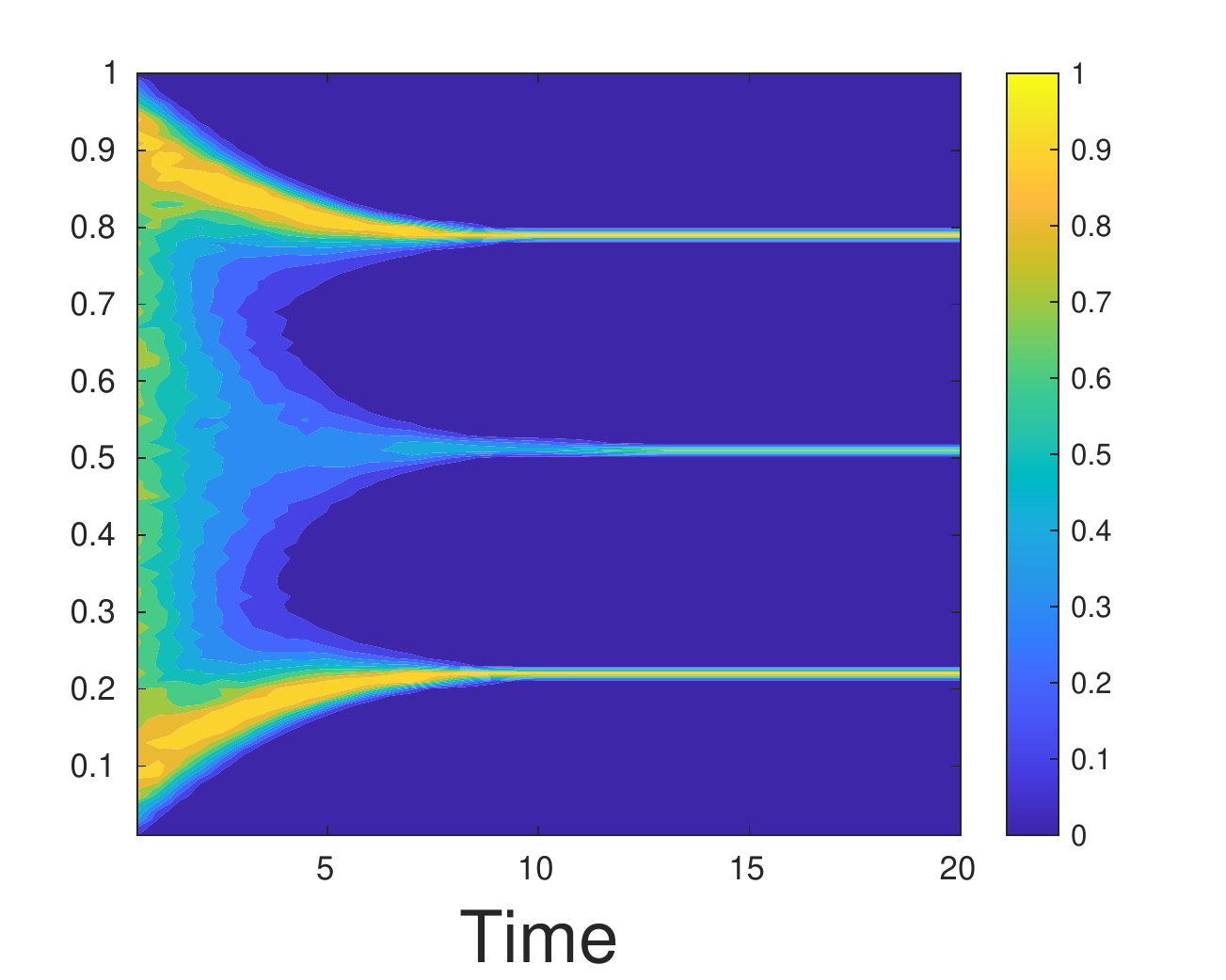}
	\includegraphics[width=0.49\textwidth]{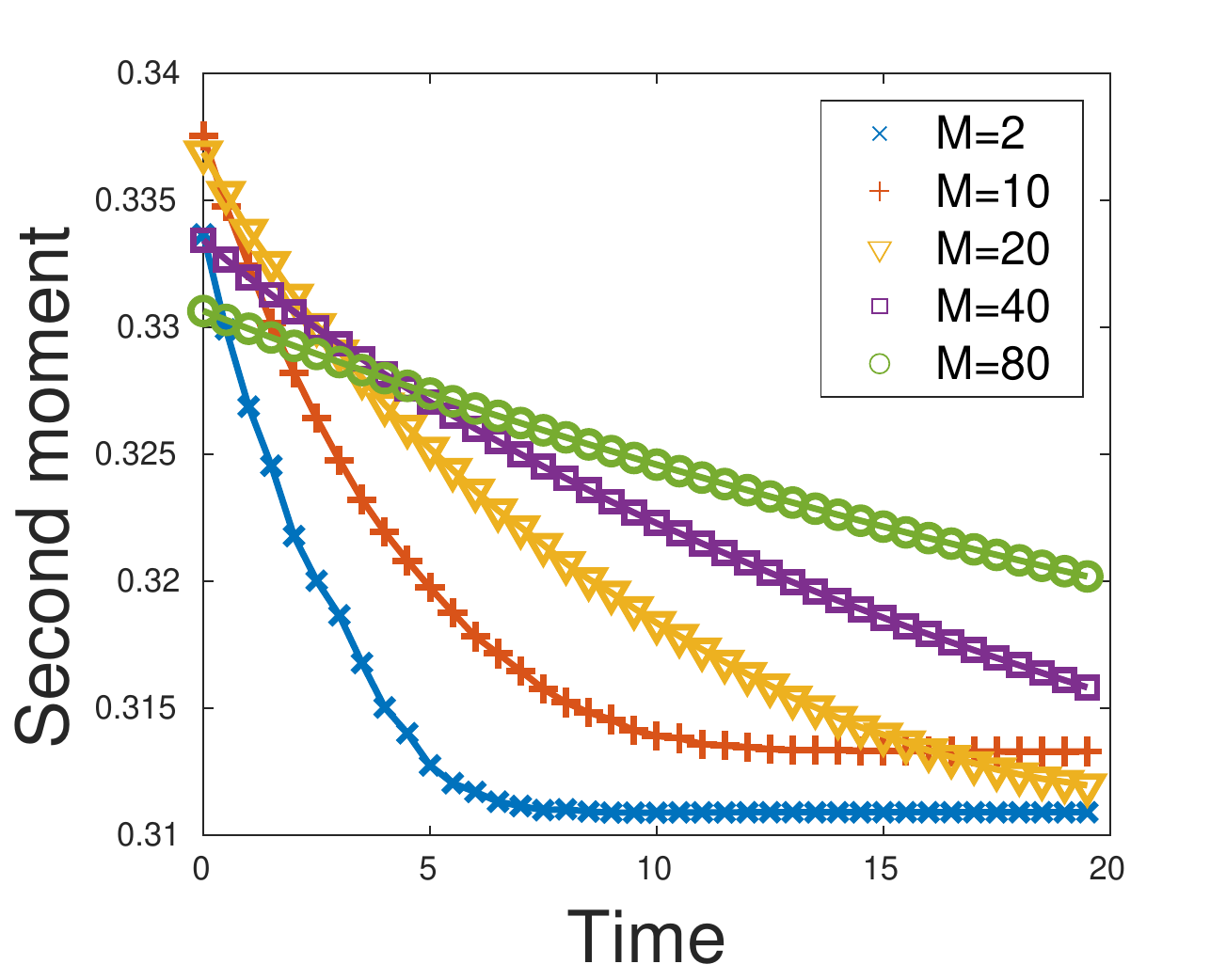}
	\caption{Left: trend to the steady--state of the mean--field model~\eqref{eq:kineticEq} computed with Algorithm~\ref{alg:meanfield} with \revision{$N=2\times10^4$}, $M=2$, $\epsilon_1=0.15$ and up to final time \revision{$T=20$}. Right: energy decay of the mean--field model~\eqref{eq:kineticEq} for several values of interacting particles $M$.\label{fig:analysisM}}
\end{figure}

In Figure~\ref{fig:analysisM} we show some results of a simple analysis of Algorithm~\ref{alg:meanfield} with respect to the values of $M$, i.e.~the size of the random subset of interacting particles. Left panel shows the trend to the steady--state of the kinetic model computed with Algorithm~\ref{alg:meanfield} and same parameters as in the right plot of Figure~\ref{fig:onedimUniformSS}. However, here we consider \revision{$N=2\times10^4$ and} $M=2$. The same equilibrium state is reached but with a faster transient. The dependence of the velocity to the formation of cluster on the size $M$ of the random subset is showed in the right panel of Figure~\ref{fig:analysisM}. The velocity to equilibrium decreases as $M$ increases. In particular, the $M=2$ case, where each particle is enforced to align to the velocity of the other particle instead of their ``mean'', exhibits the fastest convergence. This analysis suggests that, although model~\eqref{eq:kineticEq} has a convolution structure and in principle can be efficiently solved with a Fast Fourier Transform at a $O(N\log N)$ cost, Algorithm~\ref{alg:meanfield} may be preferable since it has a comparable (or even lower) computational cost and it can be applied to more general kernels where the convolution structure is lost.

\paragraph{Two--dimensions.} We also show one example of the numerical steady--states of the mean--field equation~\eqref{eq:kineticEq} in the two--dimensional case. Again we employ Algorithm~\ref{alg:meanfield}. Figure~\ref{fig:twodimUniformSS} shows the \revision{steady--state} for the particle density and the kinetic density at time $t=4$ and final time $T=50$. We \revision{use} $N=10000$ particles and the time step is $\Delta t=0.5$. Using the Mean Field Interaction Algorithm~\ref{alg:meanfield}, the number of interacting particles is taken as $M=10$. We show the results for one value of the confidence bound, $\epsilon_1=0.15$ which leads to the formation of $8$ clusters at equilibrium. The evolution in time of the two--dimensional moments is also provided in Figure~\ref{fig:twodimUniformMom}. We still observe conservation of the first moment, due to the initial symmetric distribution, and decay in time for the two non--mixed second moments. 

\begin{figure}[t!]
	\centering
	\includegraphics[width=0.49\textwidth]{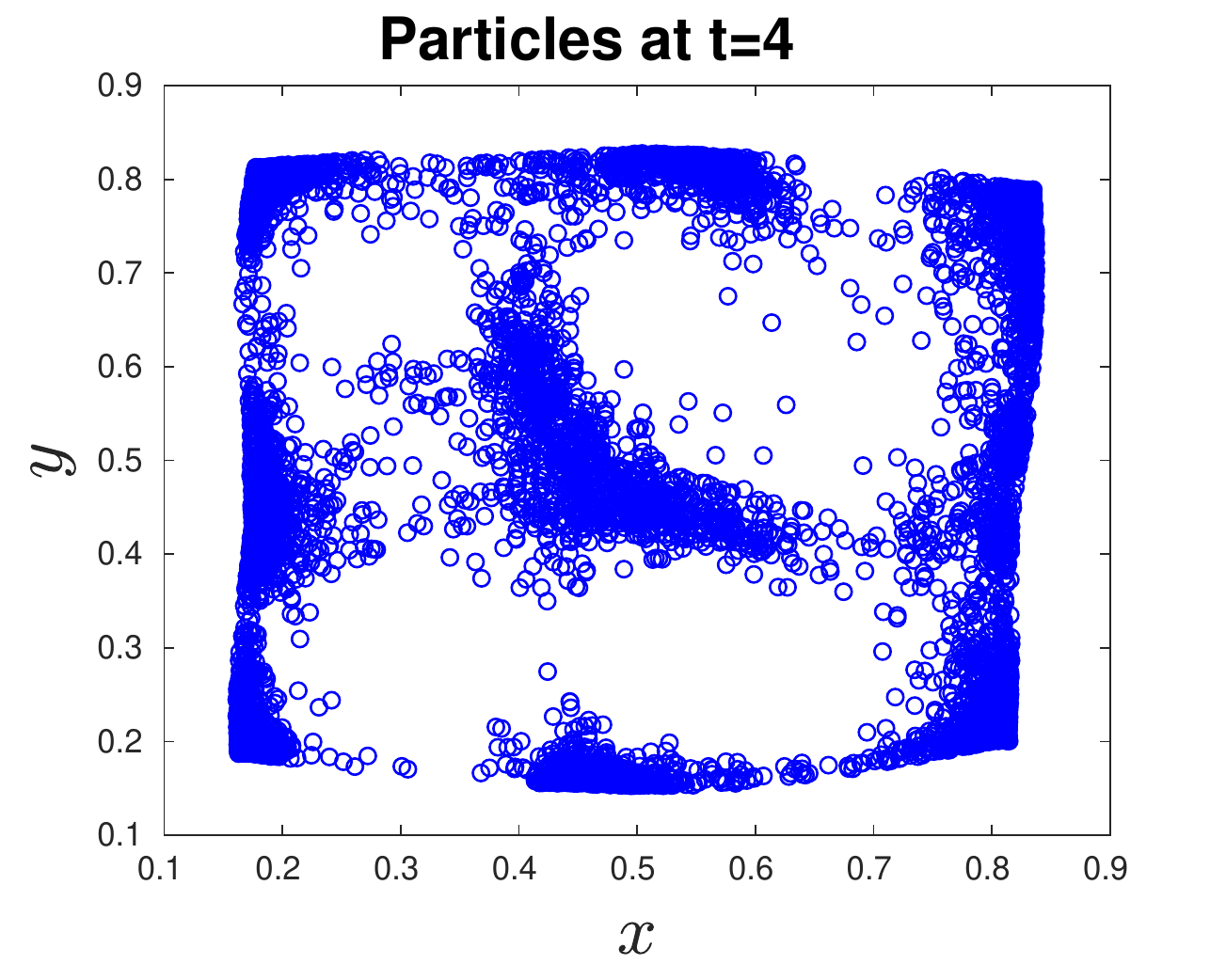}
	\includegraphics[width=0.49\textwidth]{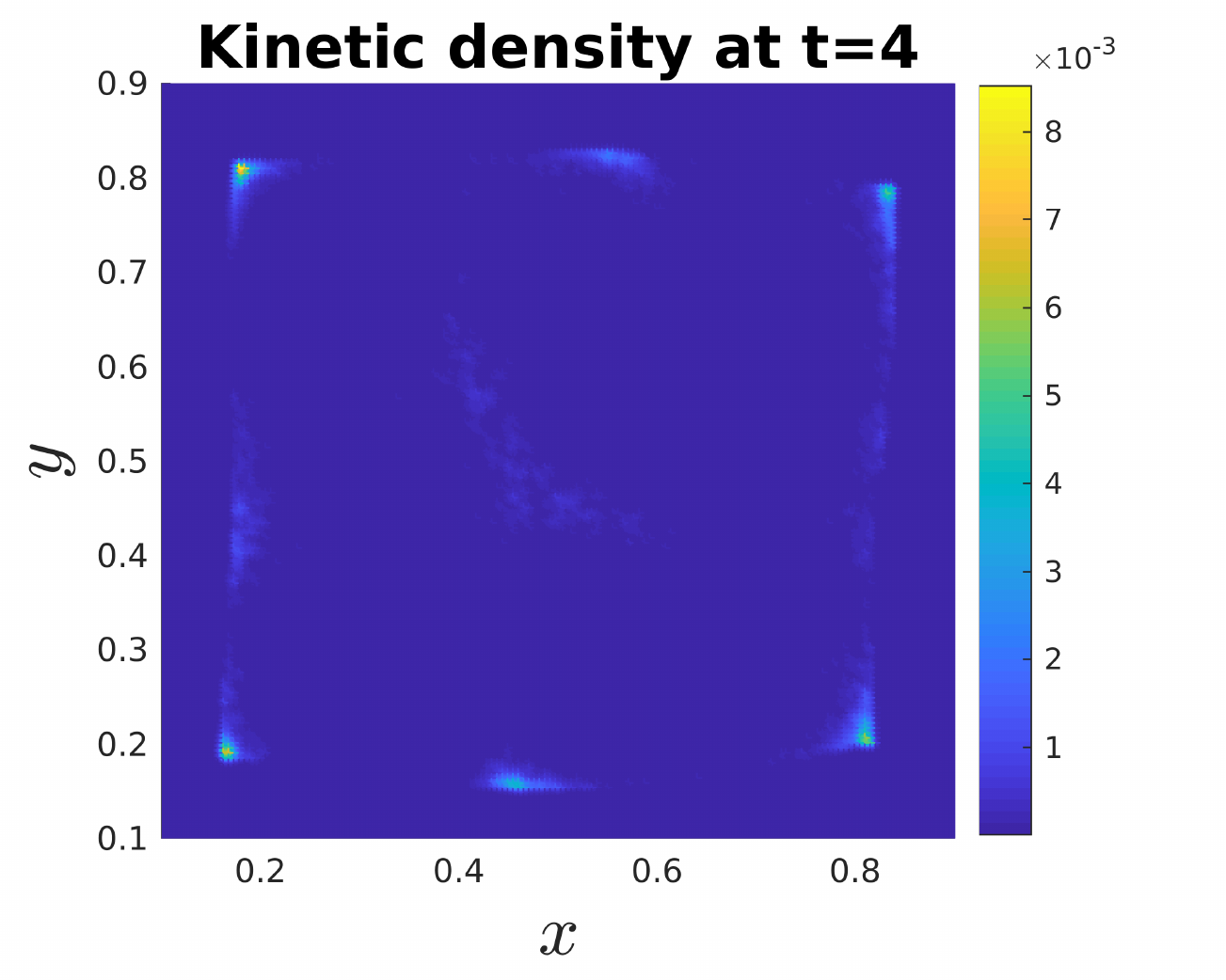}
	\includegraphics[width=0.49\textwidth]{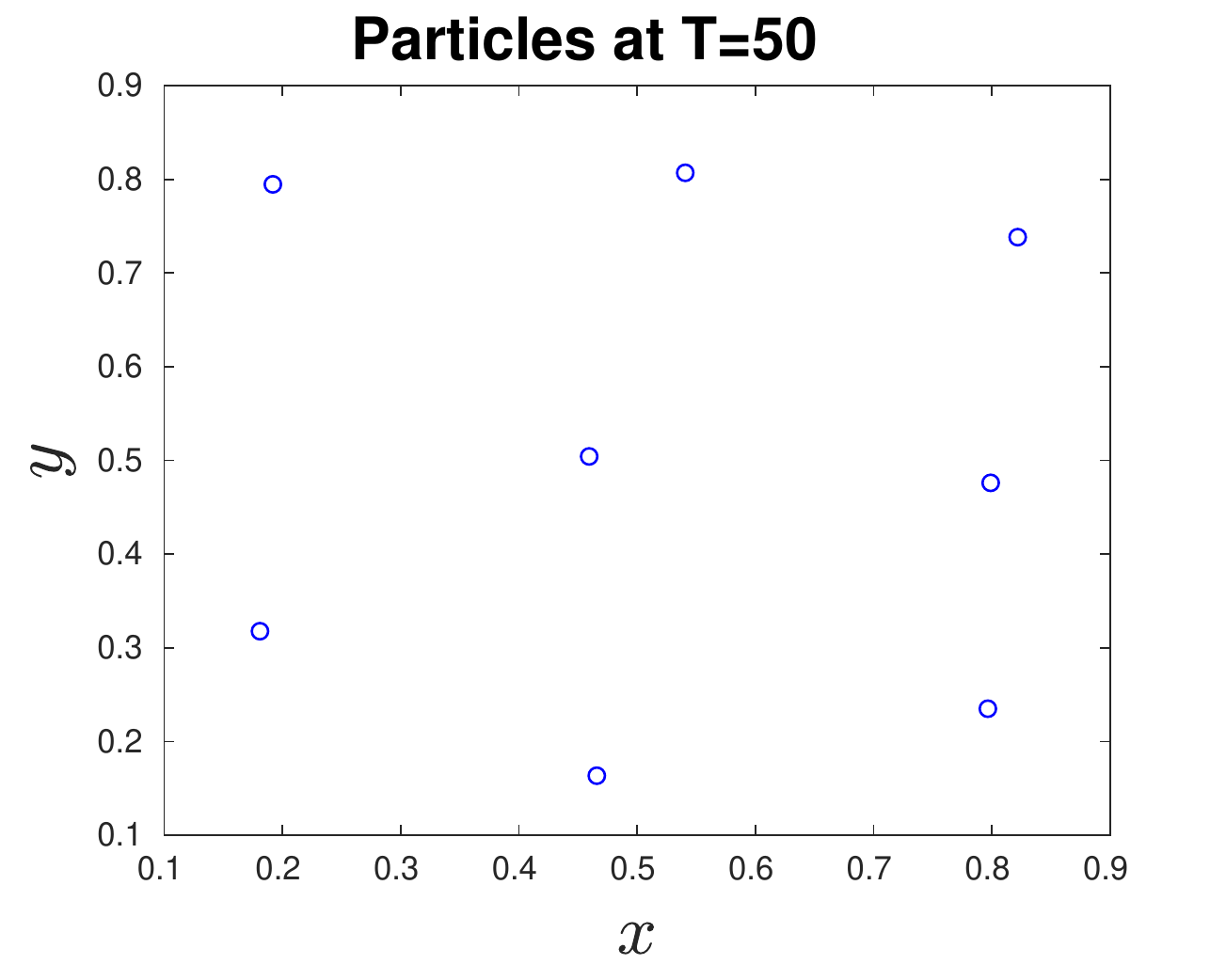}
	\includegraphics[width=0.49\textwidth]{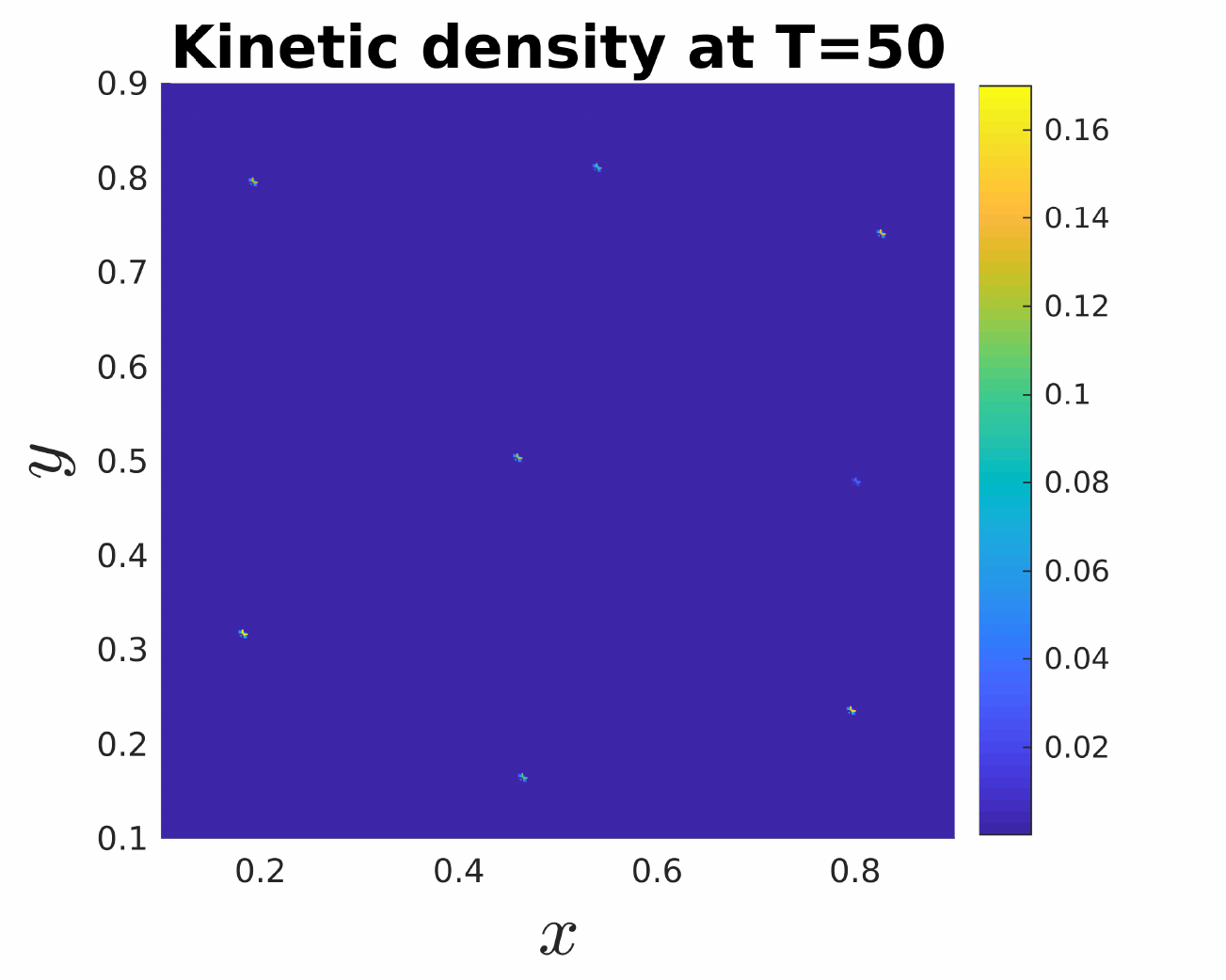}
	\caption{Particle solution (left plots) with $N=10000$ and kinetic density (right plots). Results are provided at time $t=4$ (top row) and final time $T=50$ (bottom row). The bounded confidence level is $\epsilon_1=0.15$.\label{fig:twodimUniformSS}}
\end{figure}

\begin{figure}[t!]
	\centering
	\includegraphics[width=0.49\textwidth]{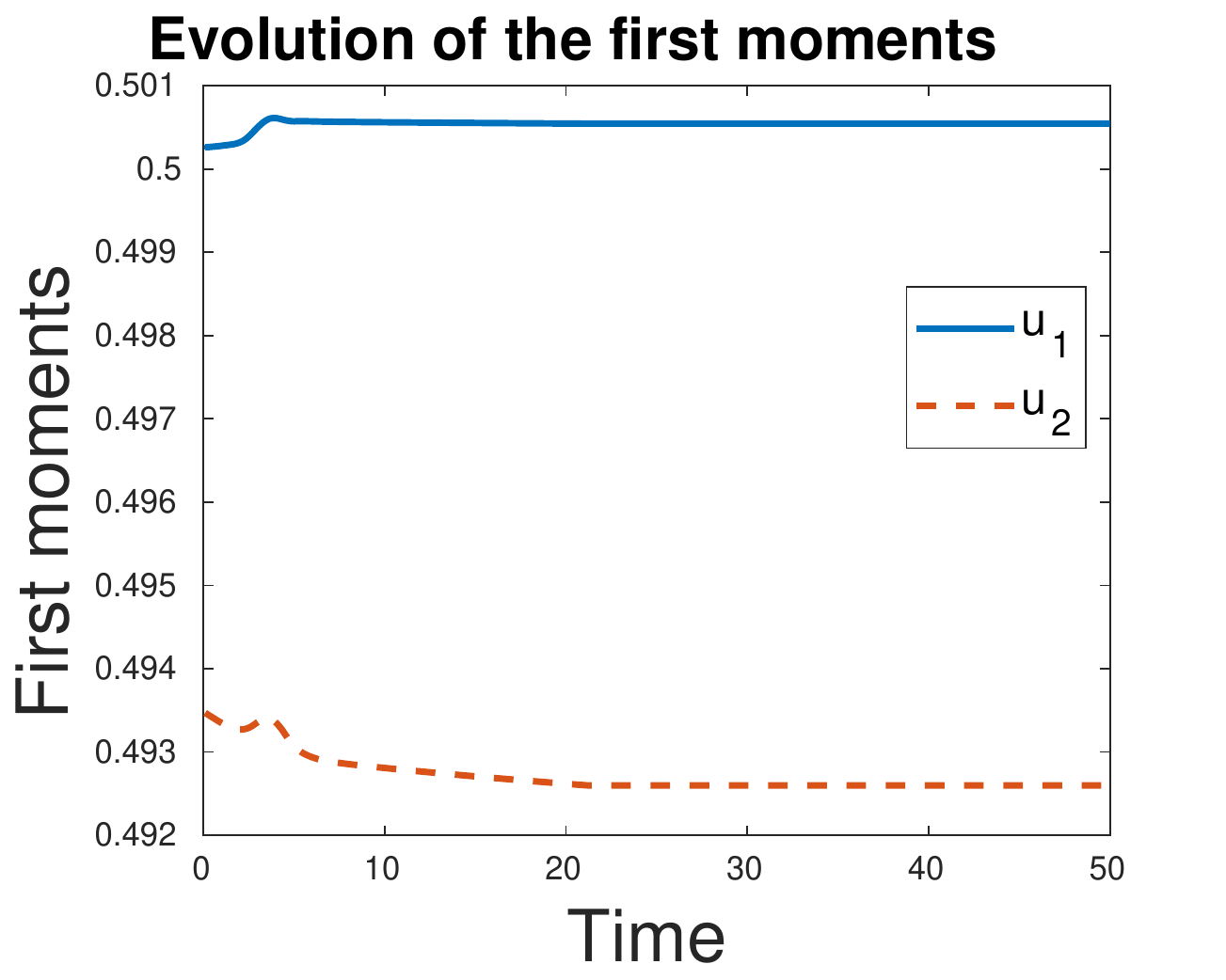}
	\includegraphics[width=0.49\textwidth]{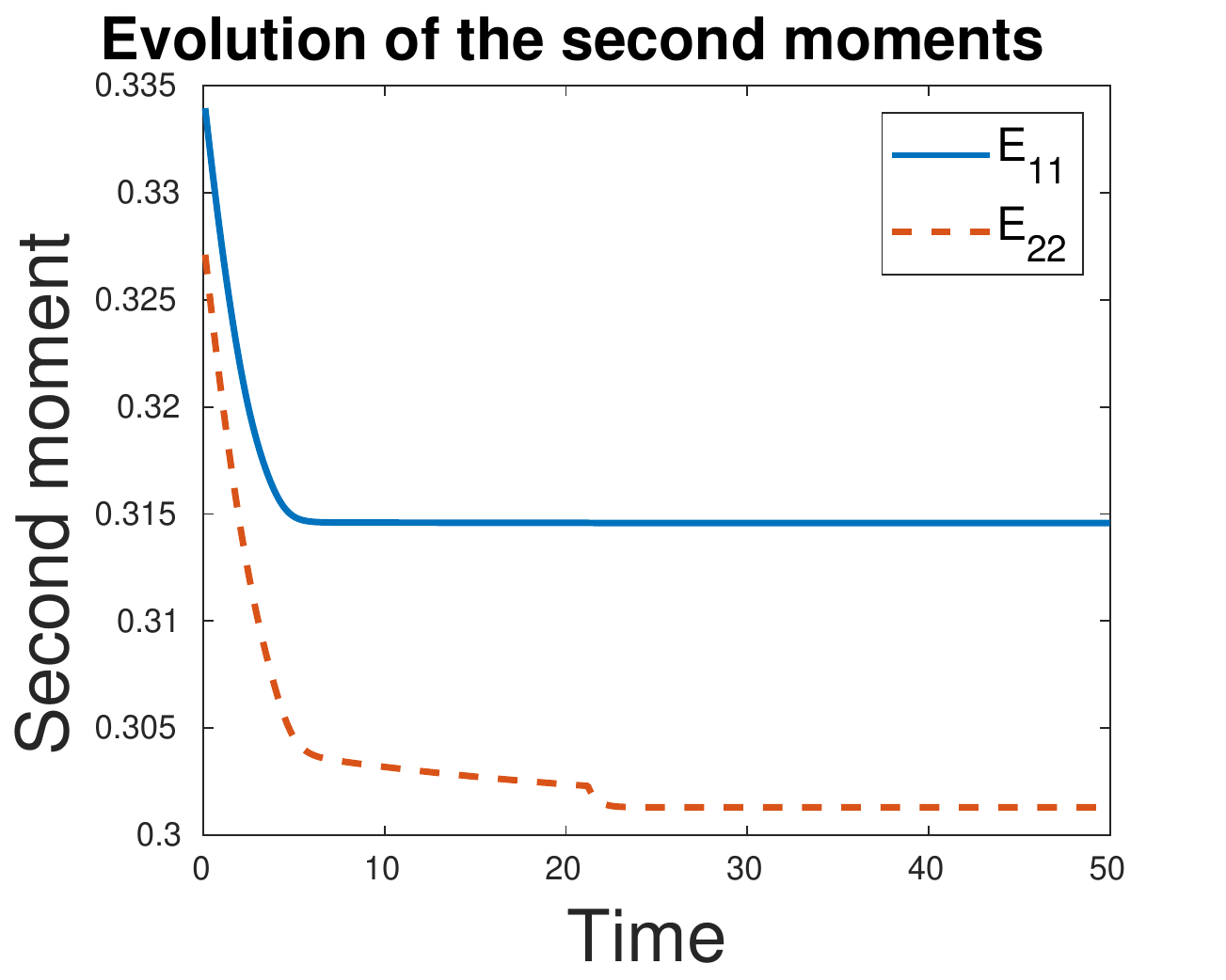}
	\caption{Evolution in time of the two--dimensional first moments (left) and second moments (right) for the bounded confidence level $\epsilon_1=0.15$.\label{fig:twodimUniformMom}}
\end{figure}

\subsubsection{Non--constant static feature.}

Next, we consider a non--constant initial distribution along the variable $\vec{c}$. We consider a one--dimensional setting both along $\vec{x}$ and $\vec{c}$ since the goal of this experiment is to provide evidence of the influence of the static feature on the clustering process. The initial kinetic distribution it given as a tensor product between a uniform and Gaussian distribution so that $$f_0(x,c) = \chi_{[0,1]}(x) \frac{1}{\sqrt{2\pi\sigma^2}}\exp\left(\displaystyle{-\frac{(c-\mu)^2}{2\sigma^2}}\right),$$
with mean $\mu=0.5$, and variance $\sigma^2=0.3$. See the top left plot in Figure~\ref{fig:staticfeature}. All the simulations are performed by using $N=5000$ samples up to equilibrium.

\begin{figure}[t!]
	\centering
	\includegraphics[width=0.49\textwidth]{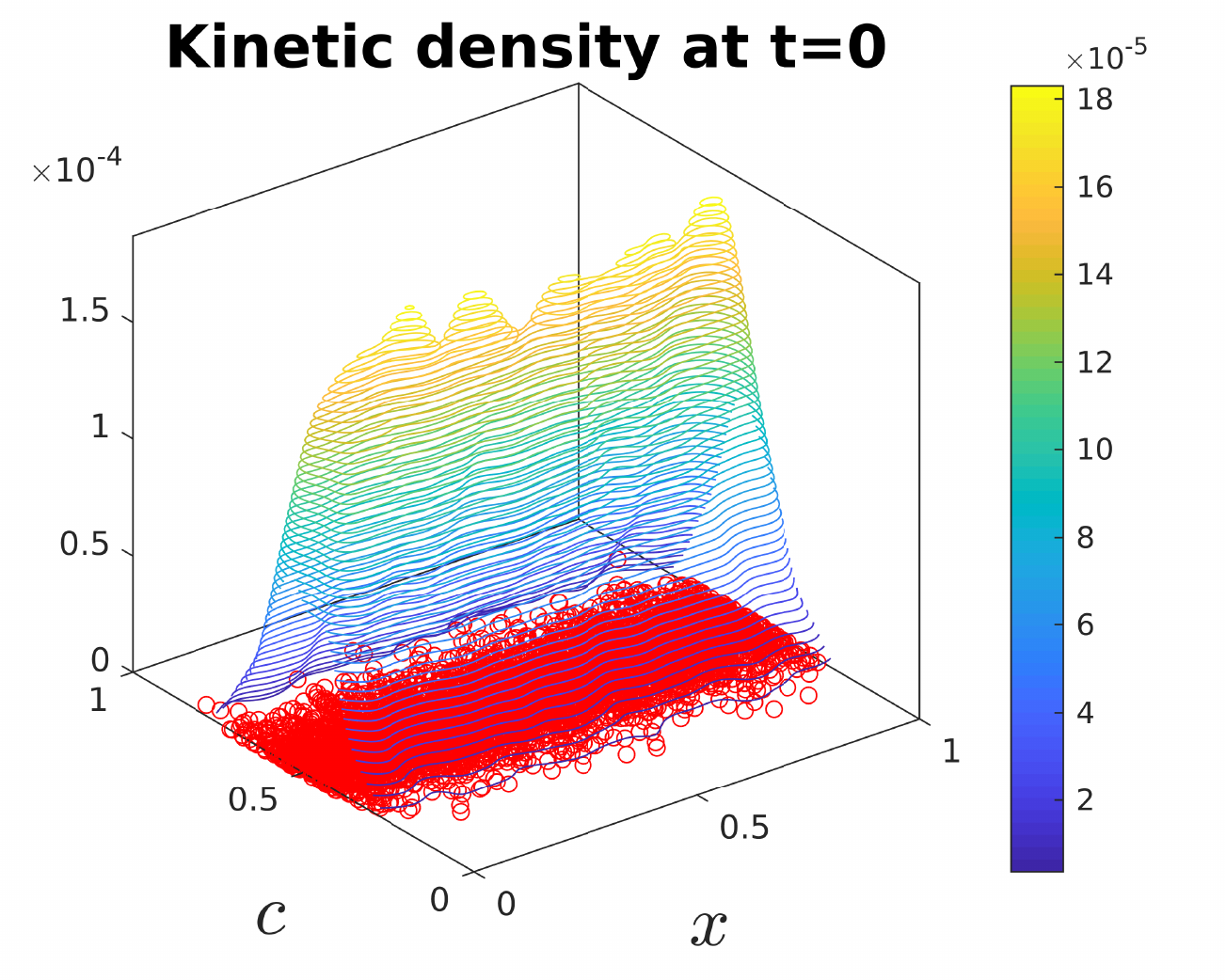}
	\includegraphics[width=0.49\textwidth]{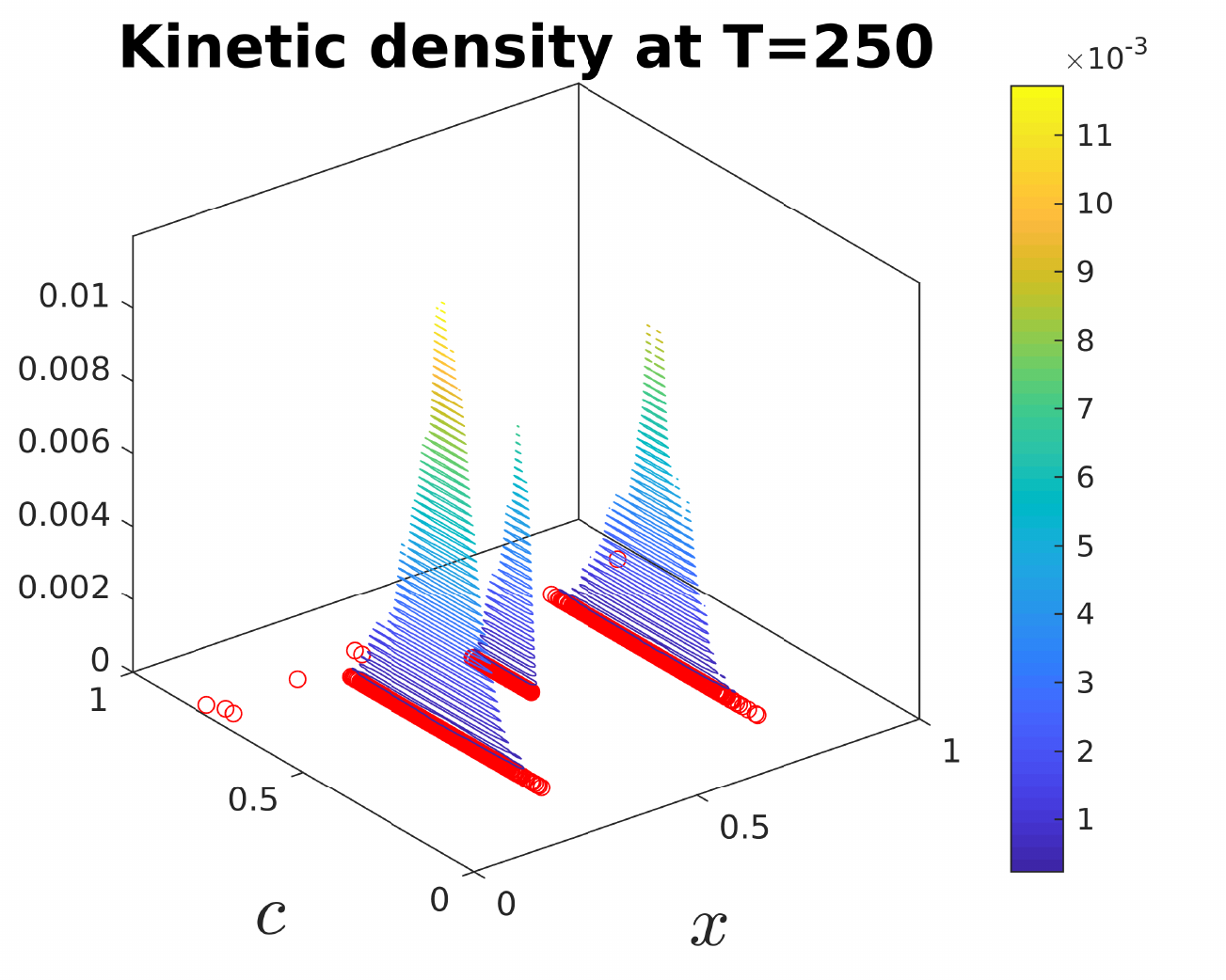}\\
	\includegraphics[width=0.49\textwidth]{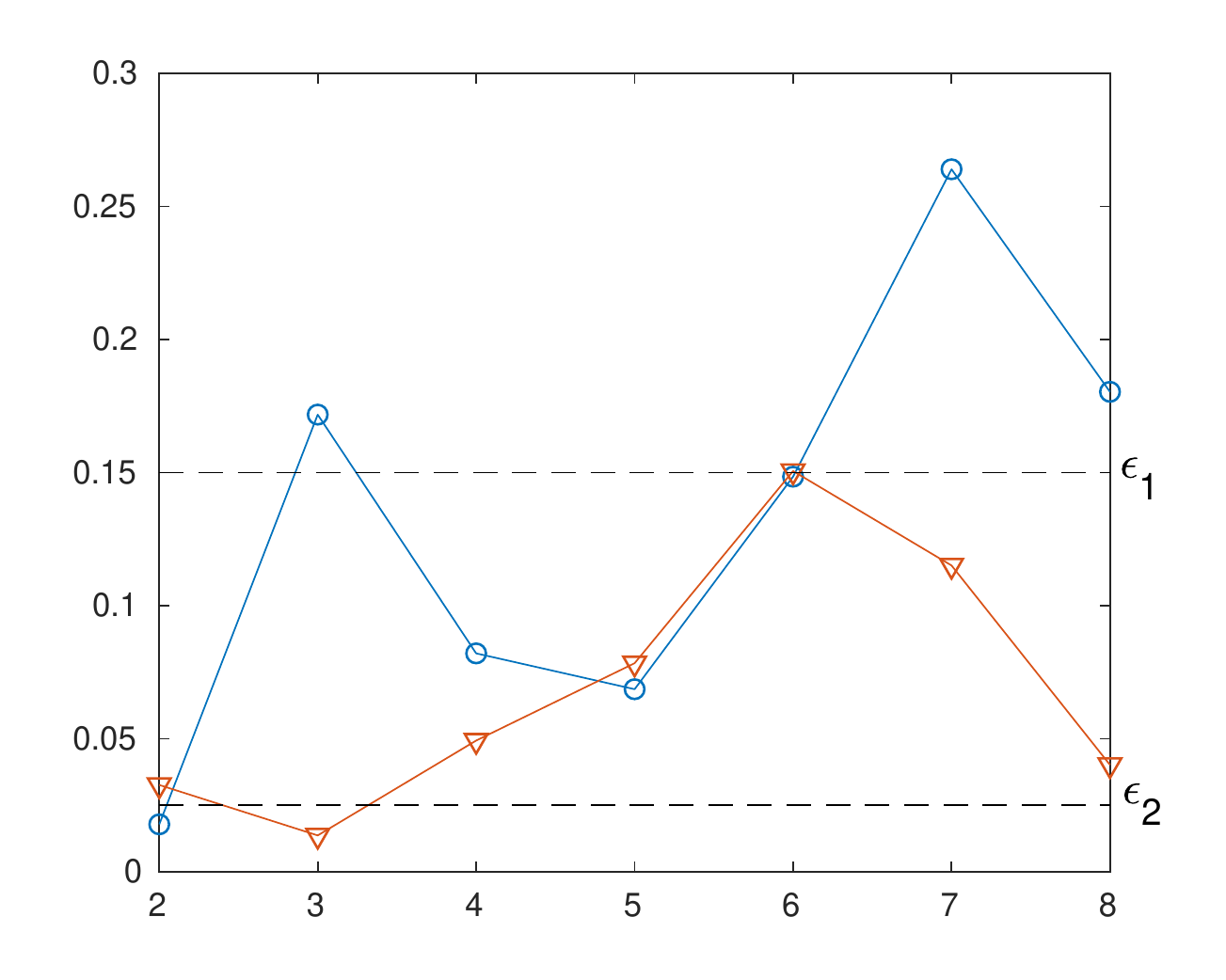}
	\includegraphics[width=0.49\textwidth]{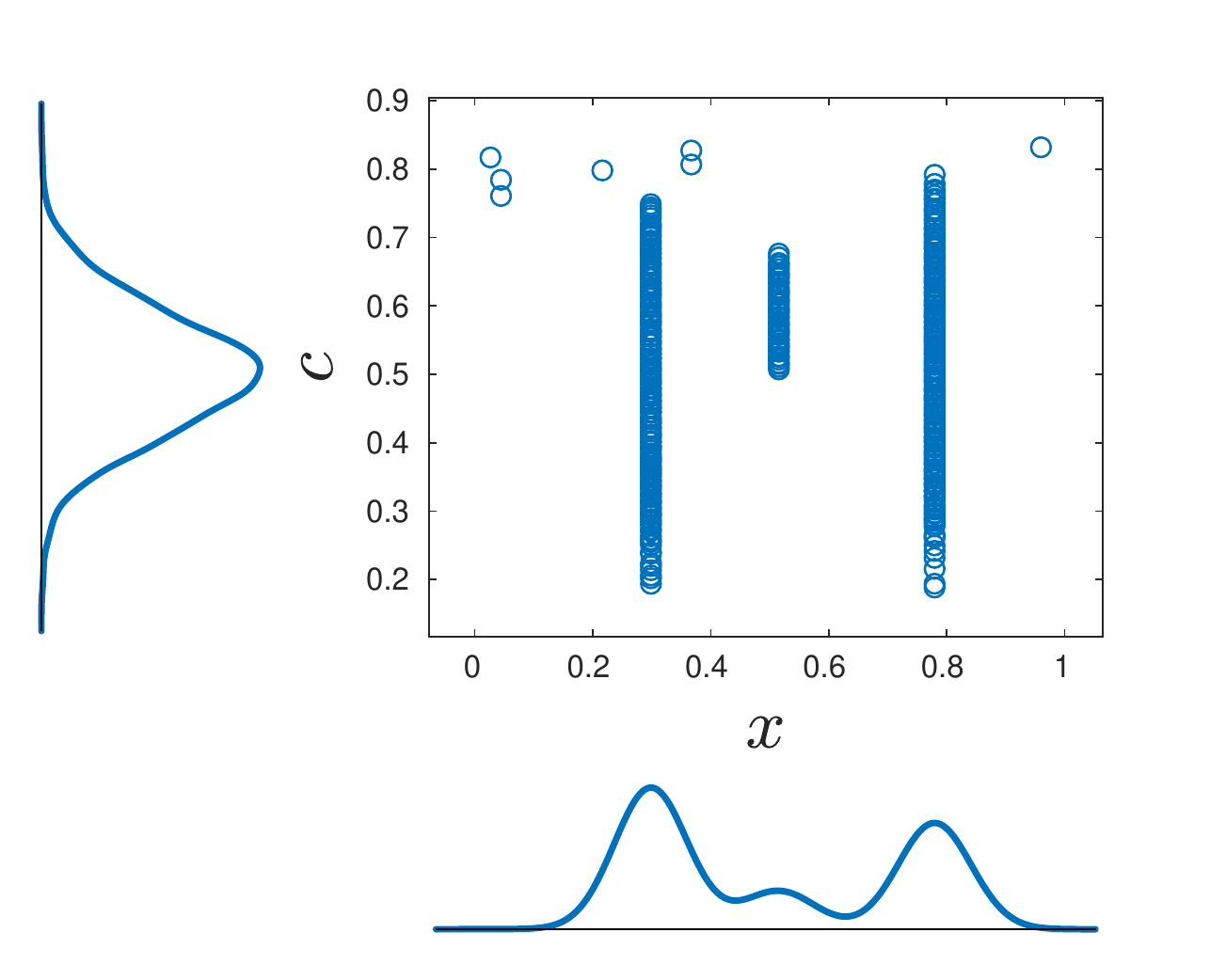}
	\caption{Top row: particles and kinetic density at initial time (left plot) and at equilibrium (right plot). Bottom row: at left, analysis of the distances between clusters in $x$ (blue line with circle markers) and $c$ direction (red line with triangle markers); at right, plot of the marginals. Confidence levels are $\epsilon_1=0.15$ and $\epsilon_2=0.025$.\label{fig:staticfeature}}
\end{figure}

In this experiment, we expect that the behavior at equilibrium is influenced also by the interactions with respect to the static feature variable due to the corresponding characteristic function in the kernel~\eqref{eq:kernelH}. In Figure~\ref{fig:staticfeature} we show the results provided by using $\epsilon_1=0.15$ and $\epsilon_2=0.025$. The top left plot shows particles and kinetic density at equilibrium. Observe that, compared to the case with constant distribution along the static feature, the low value of $\epsilon_2$ allows for the formation of more clusters, precisely $8$. Clusters arise because either their distance in $x$ direction is larger than the confidence level $\epsilon_1$ or the minimum distance in $c$ direction is larger than $\epsilon_2$. This consideration is analyzed in the bottom left plot of Figure~\ref{fig:staticfeature} where the blue line with circle markers shows the distance in $x$ between two adjacent clusters and the red line with triangle markers the minimum distance between the static features of particles being in two adjacent clusters. Observe that when the distance in $x$ is lower than the value $\epsilon_1$, the corresponding distance between the static features is larger than $\epsilon_2$, and therefore cluster is no longer possible. Moreover, we notice that time to reach equilibrium is larger due to two levels of clustering. Finally, the bottom right plot of Figure~\ref{fig:staticfeature} shows the two marginal distributions.

\begin{figure}[t!]
	\centering
	\includegraphics[width=0.49\textwidth]{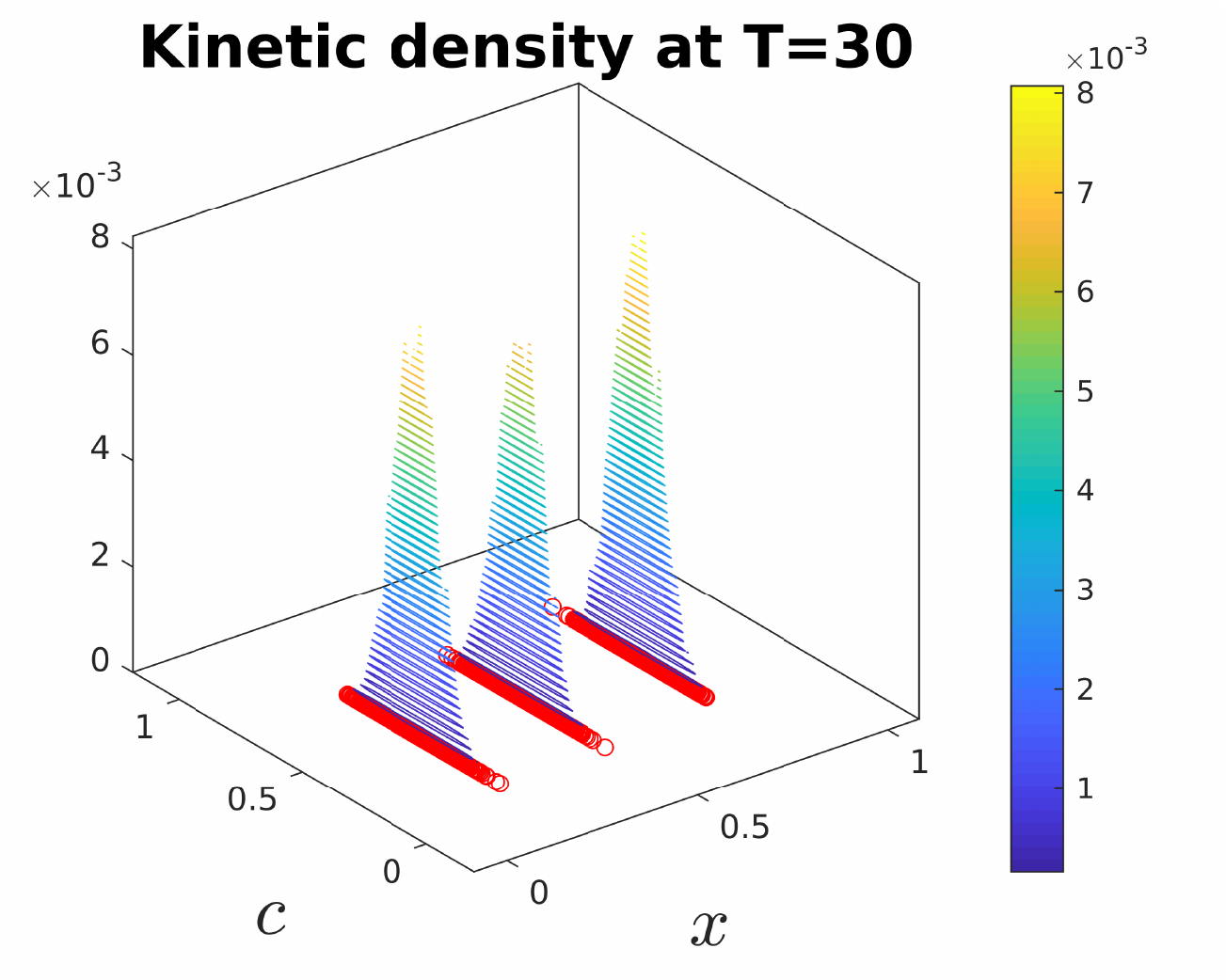}
	\includegraphics[width=0.49\textwidth]{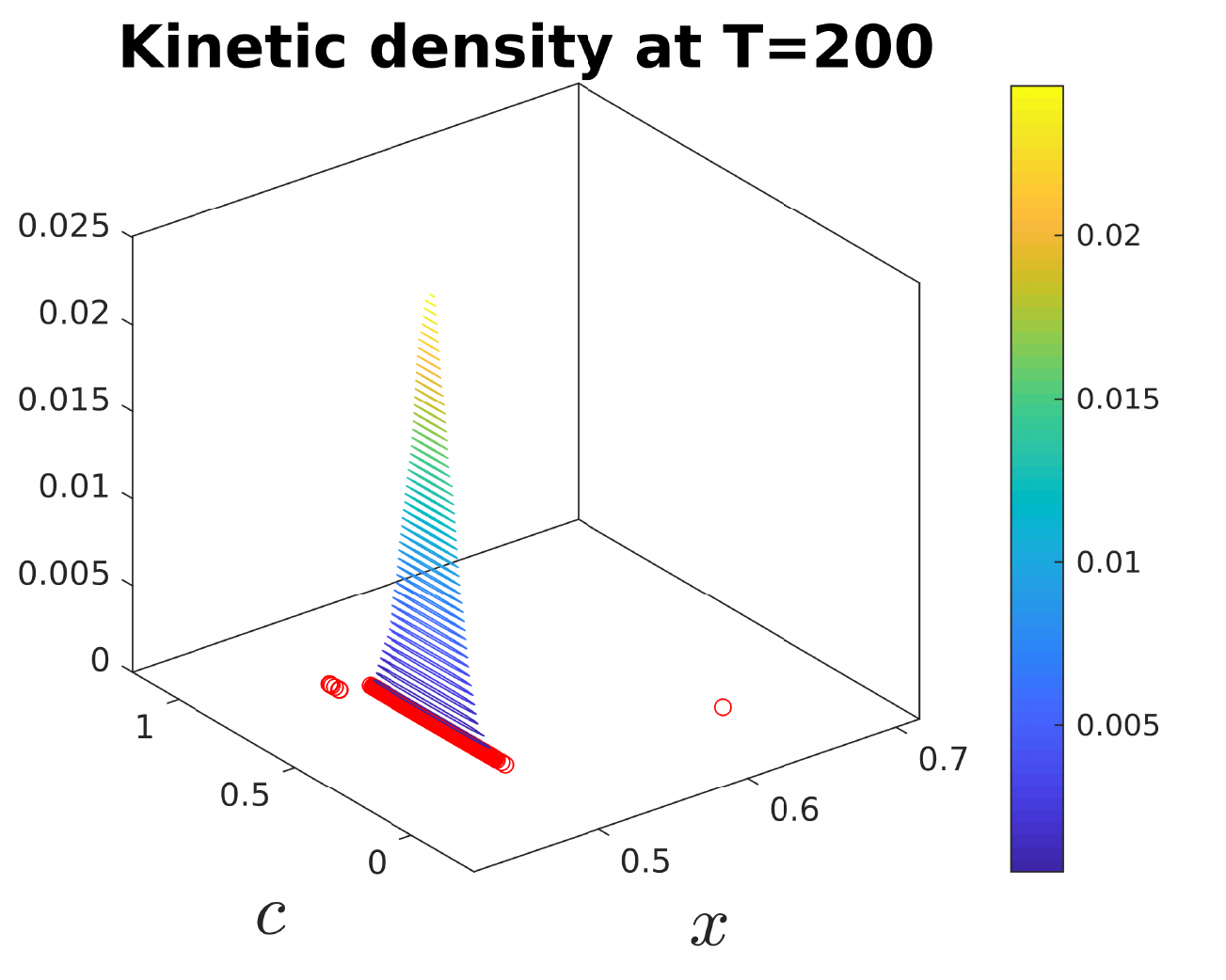}
	\caption{Particle and kinetic density at equilibrium with confidence levels $\epsilon_1=0.15$, $\epsilon_2=0.1$ (left) and $\epsilon_1=1$, $\epsilon_2=0.025$ (right).\label{fig:staticfeature2}}
\end{figure}

In Figure~\ref{fig:staticfeature2} we show the behavior at equilibrium by considering two different pairs of confidence levels. In the left plot, $\epsilon_1=0.15$ and $\epsilon_2=0.1$. Here $\epsilon_2$ is large enough to not influence the clustering process and in fact we recover $3$ clusters, exactly the same provided in Figure~\ref{fig:onedimUniformSS} when the initial kinetic distribution was constant in $c$. In the right plot, $\epsilon_1=1$ and $\epsilon_2=0.025$. In this case, $\epsilon_1$ is very large and we show the formation of clusters at equilibrium when only clustering due to the static feature is allowed.

\subsection{Clustering and shape detection} \label{sec:shapeDetect}

The property of the equation~\eqref{eq:kineticEq}, analytically studied in Section~\ref{sec:kineticProp} and numerically investigated in Section~\ref{sec:numericalProp}, of having quantized steady--states makes the model suitable for solving data clustering problems. In particular, in this section we use the kinetic model as technique for shape detection.

Shape detection can be considered as a branch of pattern recognition problems that focuses on the discovering of previously unknown patterns in a big set of data. Machine learning and many clustering algorithms have already been applied to this class of problems, such as the $k$--means algorithm which however suffers from the necessity of defining a--priori the number of clusters. Instead, as already pointed out, opinion dynamics based models automatically find the number of clusters as functions of the confidence value.

More specifically, in this section we apply the models introduced in Section~\ref{sec:newModel} to the example of a character recognition. We consider a letter ``A'' which is composed by $n$ two-dimensional points $\mathcal{L} = \{\vec{x}_i\}_{i=1}^n$ defining three segments in $\Omega=[0,1]^2$. To each of these points we apply an additive noise distributed according to a uniform or a Gaussian distribution. We obtain a new set of $n$ points
$$
\tilde{\vec{x}}_i = \vec{x}_i + \alpha \boldsymbol{\theta}_i, \quad \alpha>0, \quad \boldsymbol{\theta}_i \sim \mathcal{U}(-1,1) \ \mbox{ or } \ \boldsymbol{\theta}_i \sim \mathcal{N}(0,1)
$$
which define the noisy pattern. Here, the value $\alpha$ represents the percentage of noise and it is also chosen in such a way that $\tilde{\vec{x}}_i\in\Omega$, $i=1,\dots,n$. The points $\{\tilde{\vec{x}}_i\}_{i=1}^n$ are used as initial condition of the model. They are seen as samples from a distribution defining the noisy pattern and the Mean Field Interaction Algorithm~\ref{alg:meanfield} is employed in order to solve the kinetic model.

The goal of this example is to cluster the noisy data into a set of points which give information on the shape of the exact pattern to be detected. This can be considered also as a dimensionality reduction problem which could help other algorithms to recognize the unknown pattern efficiently by using information on the position of the clusters. Here the efficiency of the results is studied by means of the following measure:
$$
	\mathcal{E}(\epsilon_1,\epsilon_2,\alpha) = \frac{1}{\tilde{n}} \sum_{k=1}^{\tilde{n}} \min_{\vec{x}\in\mathcal{L}} \norm{\vec{C}_k - \vec{x}}_2
$$
where $\{ \vec{C}_k \}_{k=1}^{\tilde{n}}$ is the position of clusters in $\Omega$, $\tilde{n}$ the number of clusters at equilibrium. The quantity $\mathcal{E}$ measures for each cluster the minimum $2$--norm distance to the exact pattern and then computes the normalized $1$--norm of these quantities. \revision{The definition of the measure $\mathcal{E}$ is inspired by the idea of computing an average of the minimum distances between the clusters and the shape $\mathcal{L}$, when formation of multiple clusters distributed close the shape $\mathcal{L}$ arises, as in the cases showed here. Certainly, $\mathcal{E}$ cannot provide a general way to measure the quality of the results, since e.g.~in case of one steady-state cluster positioned exactly on a point of $\mathcal{L}$, we would have $\mathcal{E}=0$ providing absurdly the optimal choice.}

In the following, all the results are given for an initial set of $n=5000$ particles and a large enough final time to reach a steady--state $T=50$. Moreover, we use the Euclidean norm and all particles are initialized with a constant static feature. For an extension of this application to clustering also with respect \revision{to} the static feature see Remark~\ref{rm:staticFeatureShapeDetect} below.

\begin{table}[t!]
	\begin{center}
		\caption{Number of clusters and errors depending on the bounded confidence value and the percentage of the noise when uniformly distributed.}
		\label{tab:shapeDetectUniform}
		\small
		\setlength{\tabcolsep}{5pt}
		\begin{tabular}{|ccc|ccc|ccc|}
			\hline
			\multicolumn{3}{|c|}{$\alpha = 5\%$} & \multicolumn{3}{c|}{$\alpha = 7.5\%$} & \multicolumn{3}{c|}{$\alpha = 10\%$}
			\\
			$\epsilon_1$ & $\mathcal{E}$ & $\tilde{n}$ & $\epsilon_1$ & $\mathcal{E}$ & $\tilde{n}$ & $\epsilon_1$ & $\mathcal{E}$ & $\tilde{n}$
			\\
			\hline
			0.03 & 1.25e-02 & 30 & 0.03 & 3.47e-02 & 51 & 0.06 & 2.64e-02 & 11 \\
			0.04 & 4.10e-03 & 16 & 0.05 & 1.21e-02 & 14 & 0.07 & 1.48e-02 & 8 \\
			\textbf{0.05} & \textbf{4.00e-03} & \textbf{12} & \textbf{0.07} & \textbf{7.70e-03} & \textbf{8} & \textbf{0.08} & \textbf{1.12e-02} & \textbf{8} \\
			0.06 & 4.60e-03 & 9 & 0.09 & 7.90e-03 & 8 & 0.09 & 1.63e-02 & 5 \\
			0.07 & 5.40e-03 & 8 & 0.11 & 1.66e-02 & 3 & 0.10 & 1.63e-02 & 5 \\
			\hline
		\end{tabular}
	\end{center}
\end{table}

\begin{figure}[t!]
	\centering
	\includegraphics[width=0.49\textwidth]{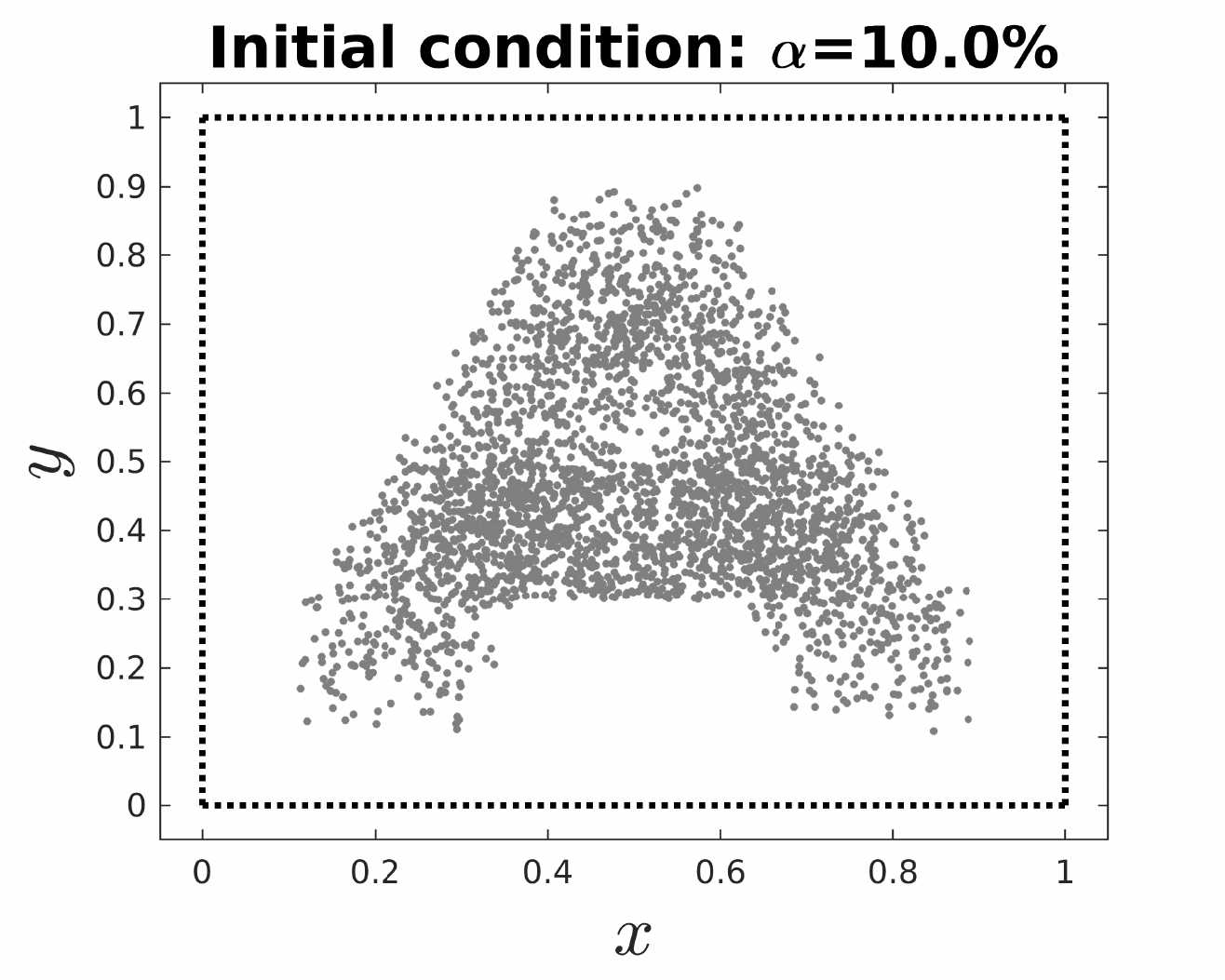}
	\includegraphics[width=0.49\textwidth]{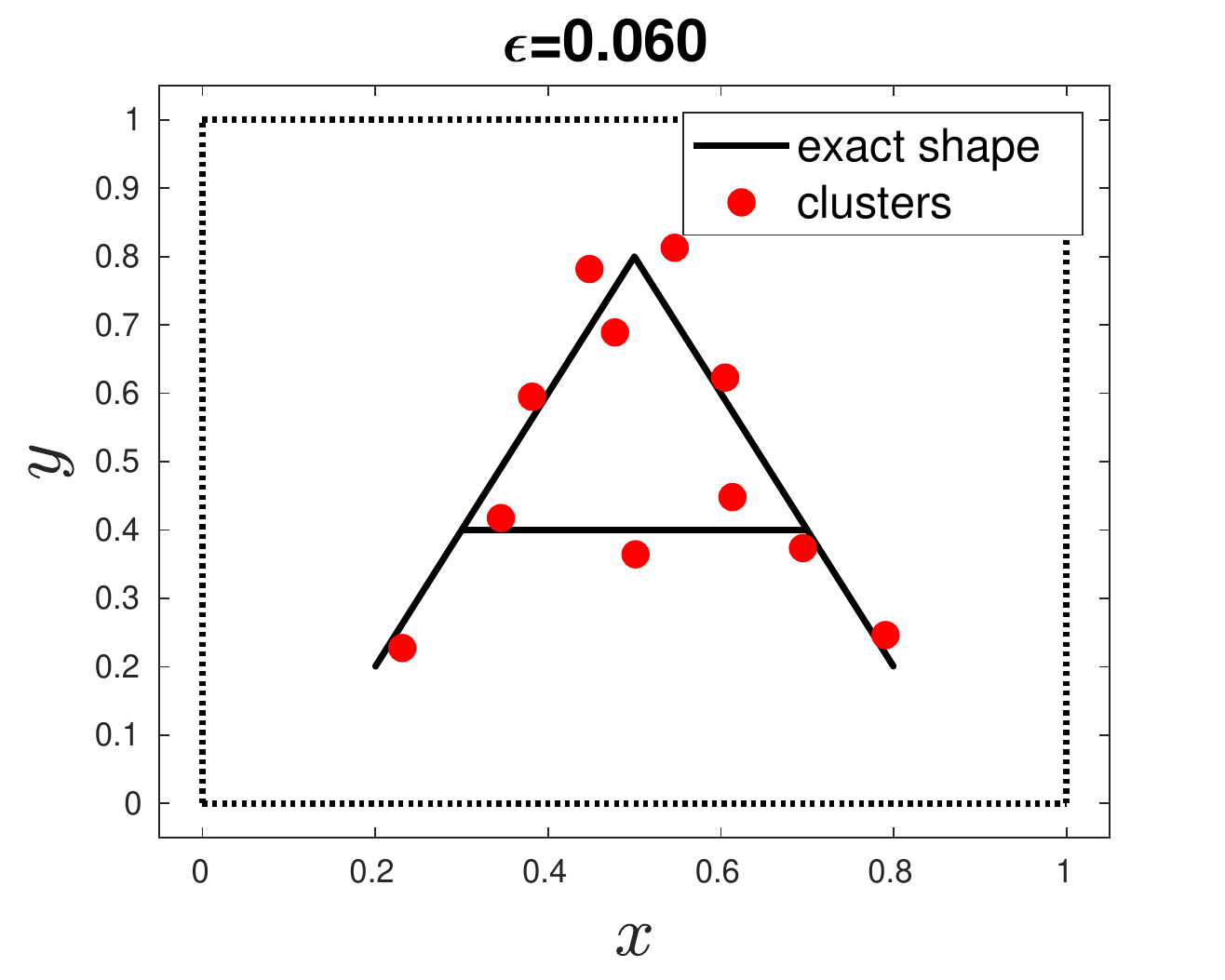}
	\includegraphics[width=0.49\textwidth]{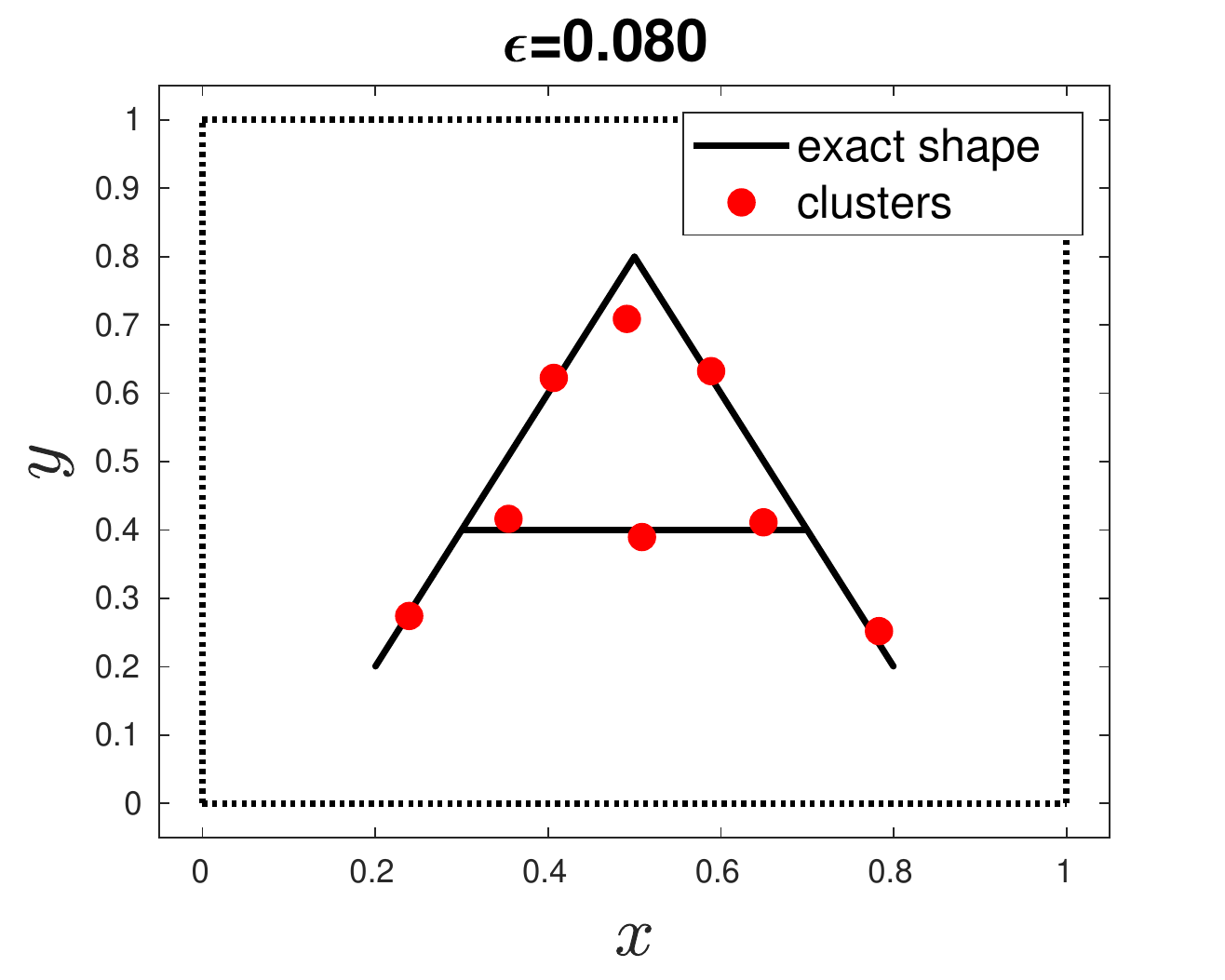}
	\includegraphics[width=0.49\textwidth]{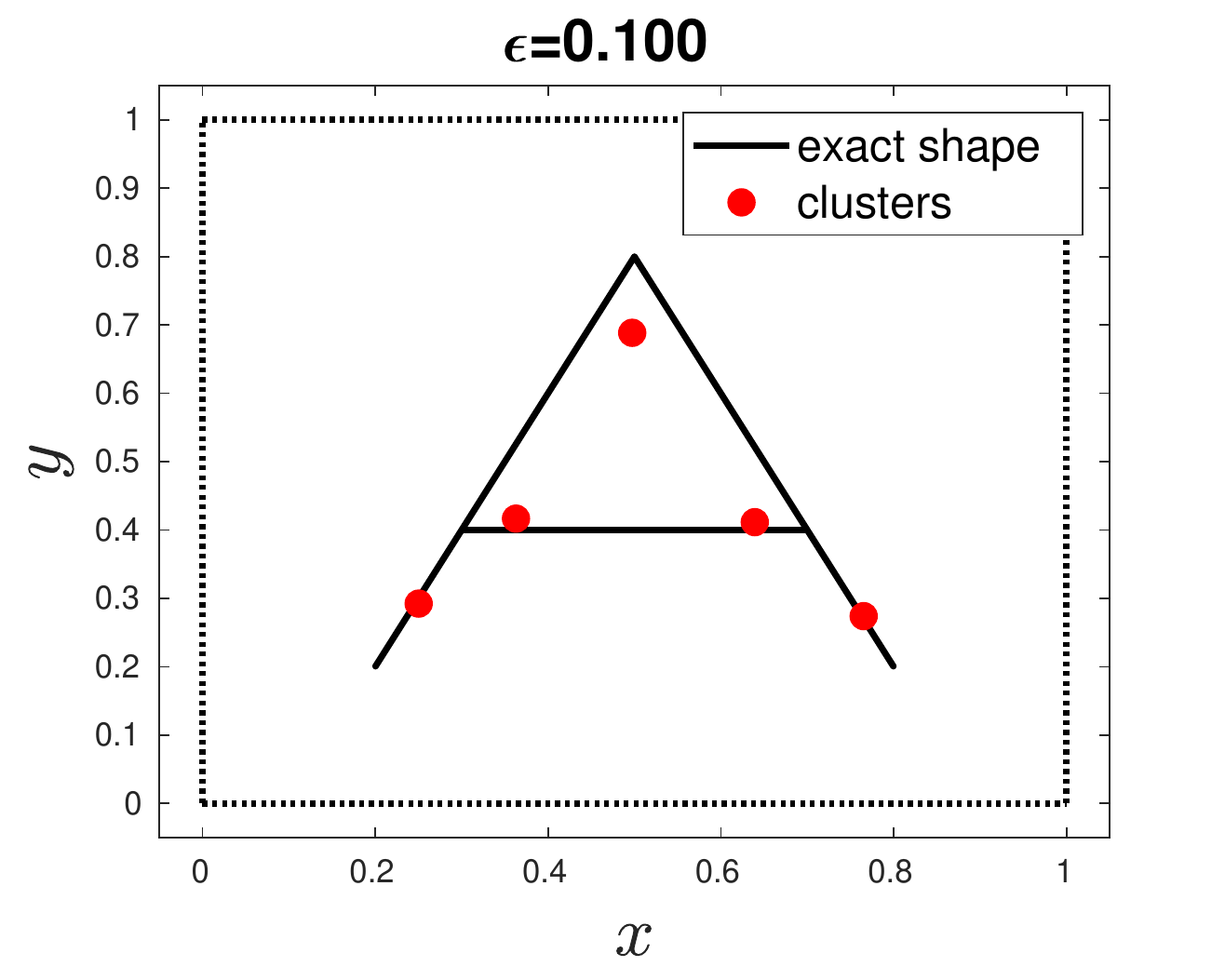}
	\caption{Shape detection of the letter ``A'' initialized with $10\%$ of a uniform additive noise. Top left panel shows the initial condition. We show clusters obtained with bounded confidence values $\epsilon_1=0.06$ (top right), $\epsilon_1=0.08$ (bottom left) and $\epsilon_1=0.1$ (bottom right). \label{fig:shapeDetectUniform}}
\end{figure}

\begin{table}[t!]
	\begin{center}
		\caption{Number of clusters and errors depending on the bounded confidence value and the percentage of the noise when normally distributed.}
		\label{tab:shapeDetectGaussian}
		\small
		\setlength{\tabcolsep}{5pt}
		\begin{tabular}{|ccc|ccc|ccc|}
			\hline
			\multicolumn{3}{|c|}{$\alpha = 5\%$} & \multicolumn{3}{c|}{$\alpha = 5.5\%$} & \multicolumn{3}{c|}{$\alpha = 6\%$}
			\\
			$\epsilon_1$ & $\mathcal{E}$ & $\tilde{n}$ & $\epsilon_1$ & $\mathcal{E}$ & $\tilde{n}$ & $\epsilon_1$ & $\mathcal{E}$ & $\tilde{n}$
			\\
			\hline
			0.05 & 4.44e-02 & 23 & 0.05 & 4.73e-02 & 24 & 0.05 & 6.37e-02 & 30 \\
			0.06 & 1.36e-02 & 11 & 0.06 & 2.62e-02 & 13 & 0.06 & 4.16e-02 & 16 \\
			\textbf{0.065} & \textbf{6.40e-03} & \textbf{9} & 0.065 & 1.63e-02 & 11 & 0.07 & 2.12e-02 & 9 \\
			0.07 & 6.70e-03 & 7 & \textbf{0.0675} & \textbf{7.40e-03} & \textbf{10} & 0.075 & 9.70e-03 & 7 \\
			0.08 & 8.50e-03 & 7 & 0.07 & 8.00e-03 & 9 & \textbf{0.08} & \textbf{9.20e-03} & \textbf{7} \\
			0.09 & 1.00e-02 & 6 & 0.08 & 9.80e-03 & 7 & 0.085 & 1.10e-02 & 5 \\
			\hline
		\end{tabular}
	\end{center}
\end{table}

\begin{figure}[t!]
	\centering
	\includegraphics[width=0.49\textwidth]{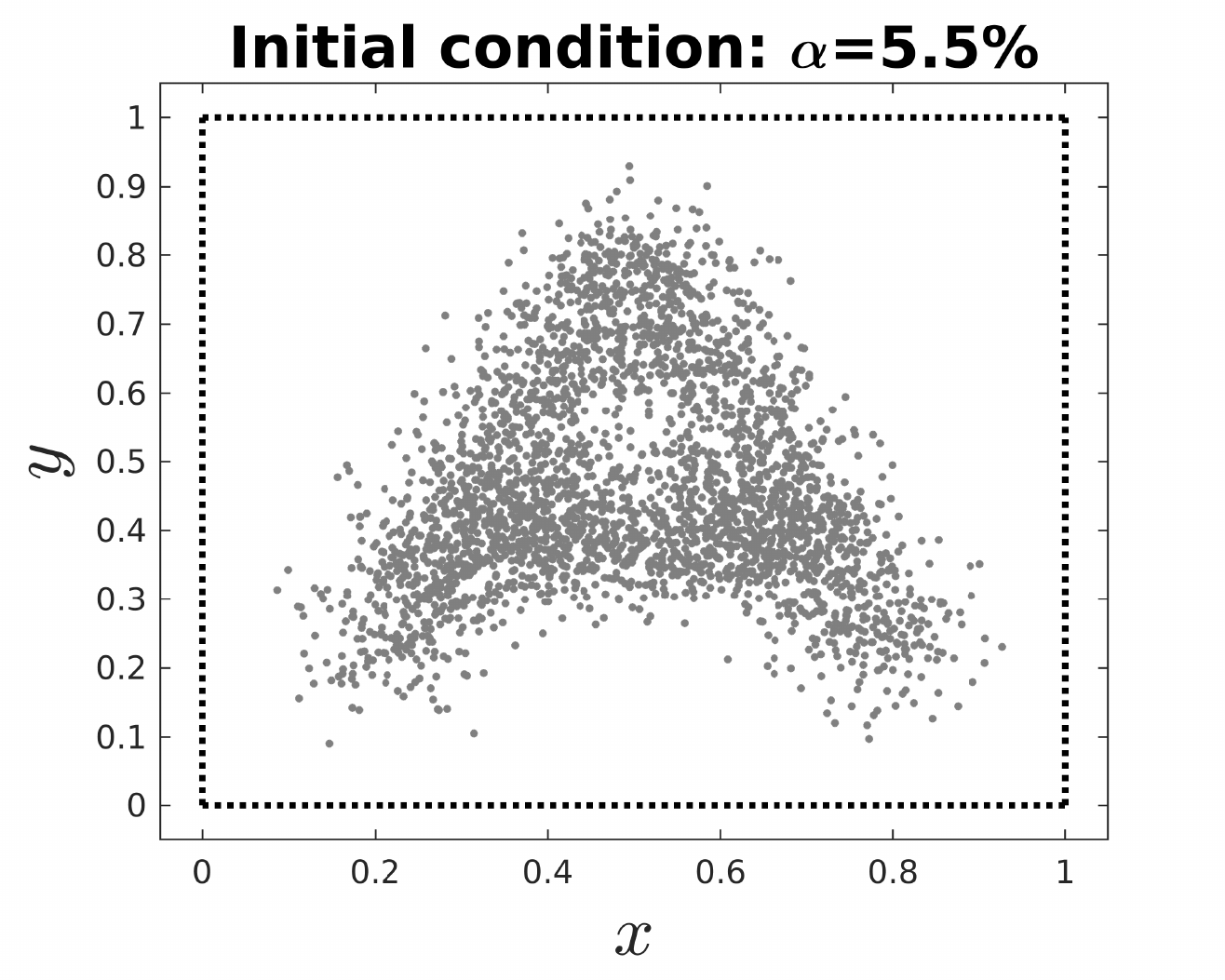}
	\includegraphics[width=0.49\textwidth]{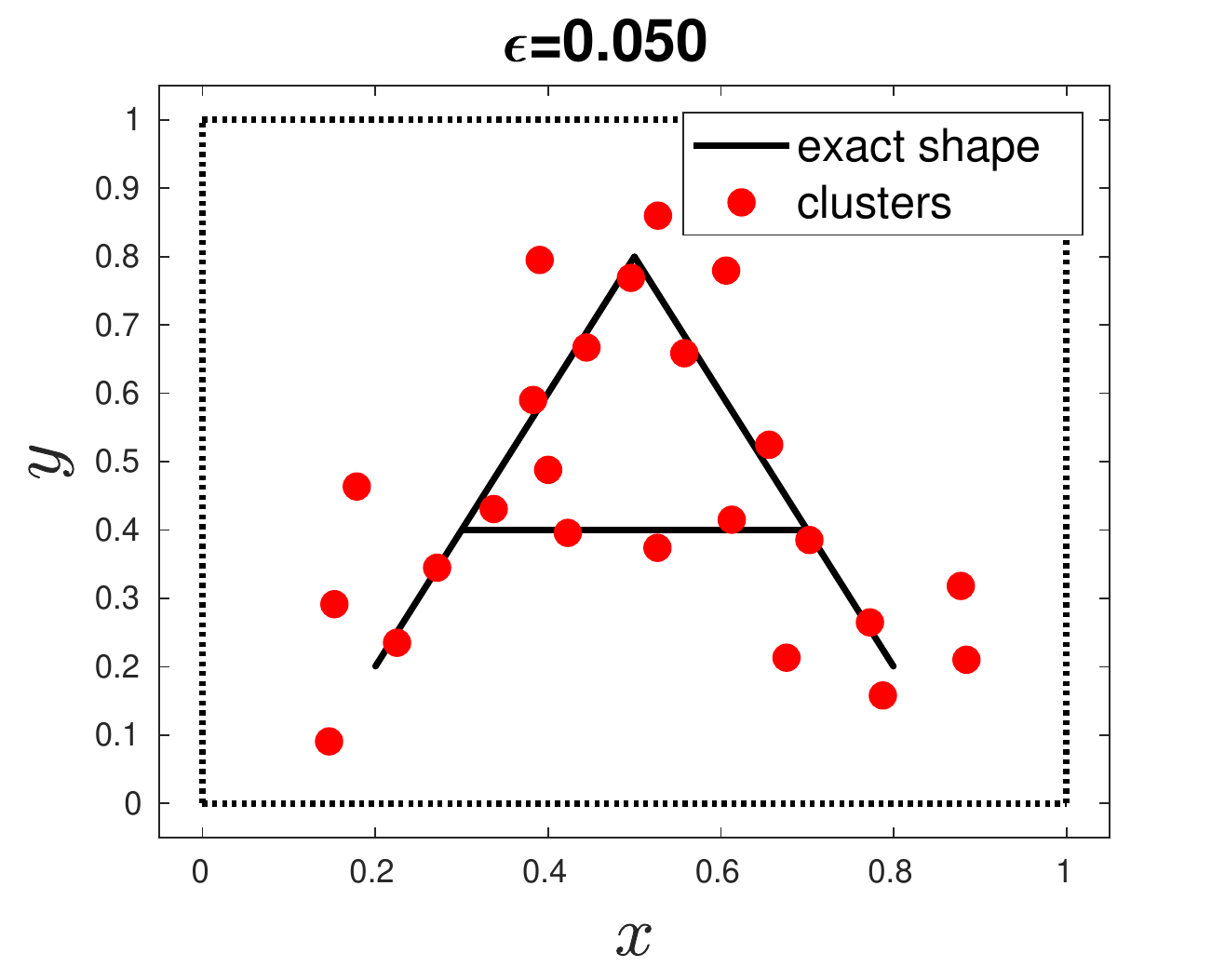}
	\includegraphics[width=0.49\textwidth]{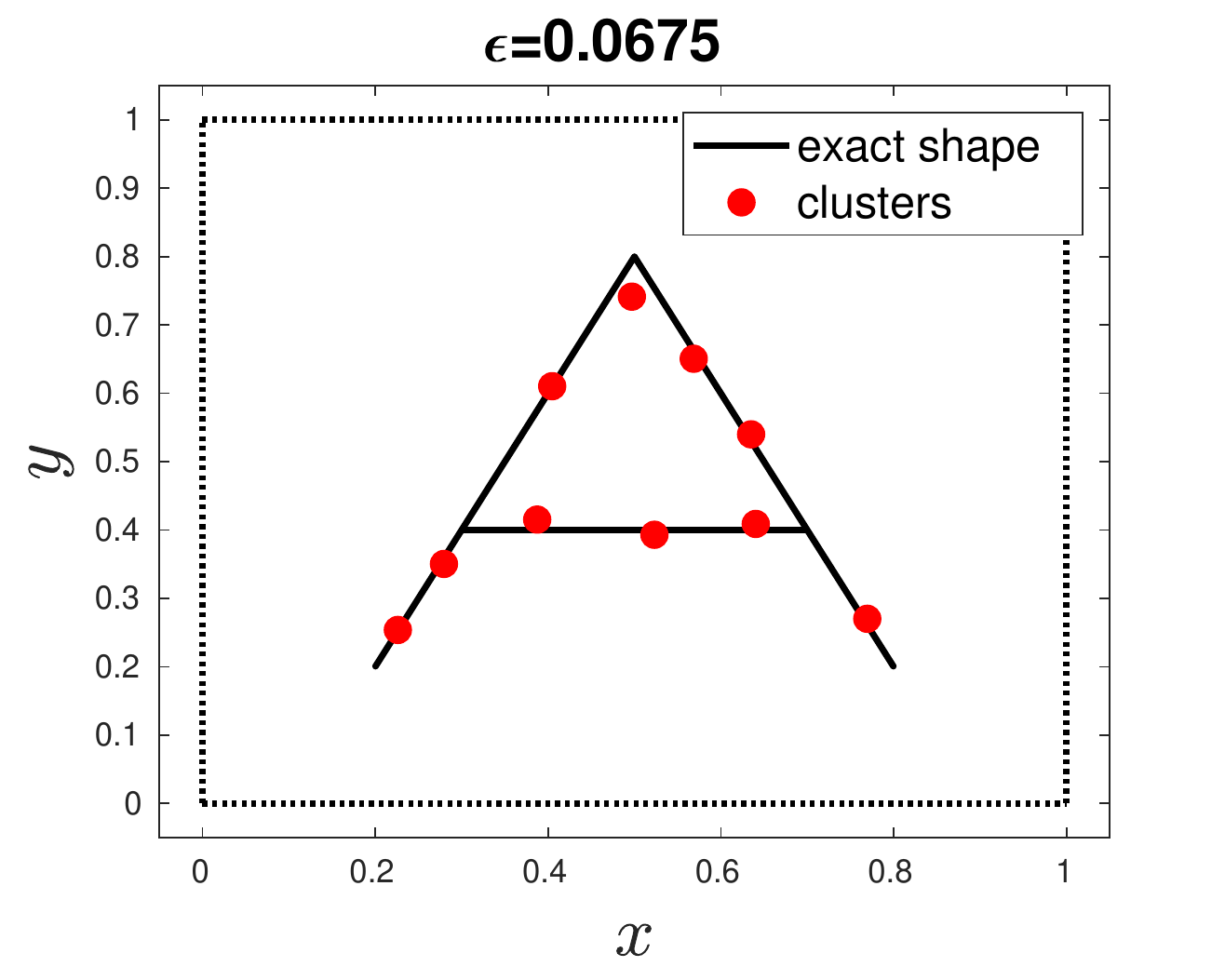}
	\includegraphics[width=0.49\textwidth]{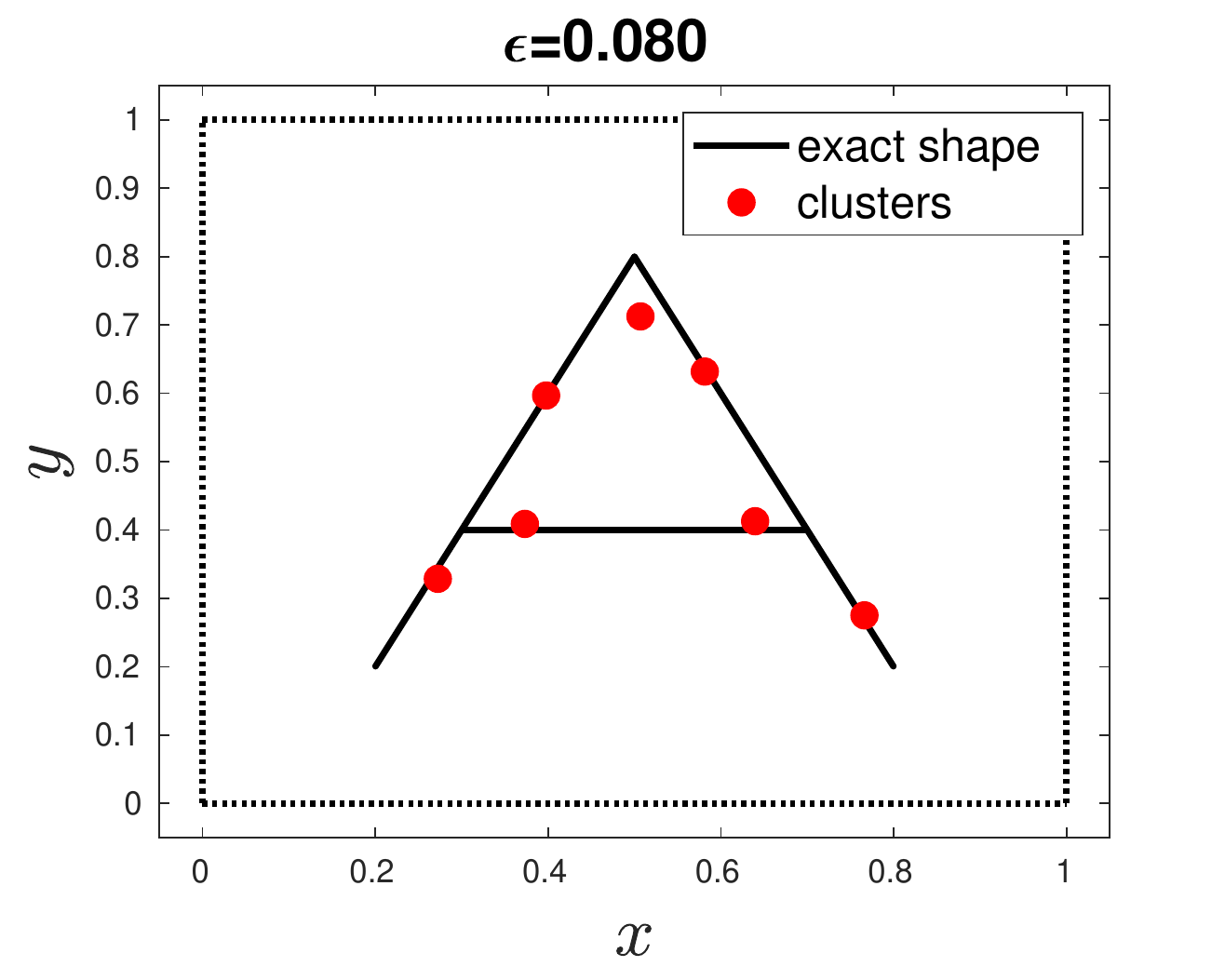}
	\caption{Shape detection of the letter ``A'' initialized with $5.5\%$ of a Gaussian additive noise. Top left panel shows the initial condition. We show clusters obtained with bounded confidence values $\epsilon_1=0.05$ (top right), $\epsilon_1=0.0675$ (bottom left) and $\epsilon_1=0.08$ (bottom right)..\label{fig:shapeDetectGaussian}}
\end{figure}

In Table~\ref{tab:shapeDetectUniform} and Table~\ref{tab:shapeDetectGaussian} we show a sensitivity study of the error $\mathcal{E}$ as function of the percentage of noise $\alpha$ and of the bounded confidence value $\epsilon_1$, for initial noisy data uniformly and normally distributed, respectively. The number of clusters at equilibrium is also provided. Results with minimal error value are highlighted with bold font. In particular, we point out as the increasing percentage of noise highly affects the results.

In Figure~\ref{fig:shapeDetectUniform} and Figure~\ref{fig:shapeDetectGaussian} some of this cases are shown. Precisely, for the uniformly distributed initial noisy data, Figure~\ref{fig:shapeDetectUniform}, we consider the largest value of the percentage of noise $\alpha=10\%$ and three values of the bounded confidence level $\epsilon_1$: \revision{the smallest $\epsilon_1=0.06$ in Table~\ref{tab:shapeDetectUniform}, the optimal $\epsilon_1=0.08$ (giving the smallest value of the error $\mathcal{E}$) and the largest one $\epsilon_1=0.1$.} We observe that the result is highly influenced by the choice of the confidence bound which determines the number of final clusters. Smaller and larger values of $\epsilon_1$ result in larger values of the error measure $\mathcal{E}$. In fact, in one case the model provides too many clusters which are very spread out, with several outliers. In the other case too few clusters so that all the information on the exact shape is lost. For the optimal value of $\epsilon_1=$ the model provides a good agreement with the exact pattern.

Similar considerations hold for Figure~\ref{fig:shapeDetectGaussian} where we show the noisy initial data obtained with $\alpha=10\%$ percentage of noise and three values of the bounded confidence level $\epsilon_1=0.05$, $\epsilon_1=0.0675$ (giving the smallest value of the error $\mathcal{E}$) and $\epsilon_1=0.08$.

We point out that the goal of this application is to provide a preliminary step for shape feature extraction. This technique should be coupled with a further algorithm which is able to select the correct letter from a given alphabet. The dimensionality reduction provided by the data clustering approach allows to efficiently employ the subsequent recognition analysis.


\begin{remark}[Shape detection with non--constant static feature] \label{rm:staticFeatureShapeDetect}
	In the previous examples the static features is taken constant for all the data. In the non--constant case, one could think of this feature as an additional initial given input which measures the quality of the information of each single point and that can be modeled by the distance to its exact value in $\mathcal{L}$. \end{remark}

\subsection{Color image segmentation} \label{sec:imageSeg}

We now turn to the second application of the data clustering model~\ref{eq:kineticEq} which is the segmentation of gray scale images. Image segmentation is widely used in medical and astronomical image processing, face recognition, etc. In fact, this technique allows to partition \revision{an} image in sets of significant regions of pixels sharing same characteristics such as closeness and similar intensity of color. As in shape detection, the aim of such approach is to modify the description of an image into a structure which easier to be analyzed for subsequent computer vision algorithms.

Several mathematical techniques have been applied to image segmentation. For instance we mention level set methods~\cite{Osher1988,Sethian1999} methods based on the Kuramoto model~\cite{2016Liuetal,2014NovikovBenderskaya} and supervised convolution neural networks~\cite{Chenetal2018,Zhangetal2014}. A more complicated procedure to segmentation of images is the clustering approach \revision{and} we refer e.g.~to the $k$--means method~\cite{Shan2018,Zheng2018}, $c$--means method~\cite{Memon2019} and hierarchical clustering method~\cite{Tian2016}.

Here, we study the efficiency of the kinetic model~\eqref{eq:kineticEq} to solve image segmentation problems and therefore we still relies on the clustering technique. Thanks to the model introduced in this paper, segmentation can be performed by using two levels of clustering in order to determine regions of pixels with similar characteristics. More precisely, we cluster based on the \revision{Euclidean} distance and the distance of the gray intensity between pixels. Taking into account spatial coordinates is needed in order to avoid the problem of selecting homogeneous regions of pixels which are however distinct in the original image. The clustering with respect the intensity of the gray colors is performed by using the static feature variable $\vec{c}$.

The application of~\eqref{eq:kineticEq} works as described in the following. Number of pixels is the number of particles which are assumed to be equally spaced samples. The intensity of the gray color of each pixel is computed at initial time and defines the one--dimensional static feature $c$ of each particle. The Mean Field Interaction Algorithm~\ref{alg:meanfield} is then applied to find clusters representing local regions of the original image with homogeneous intensity of color. At equilibrium, we compute the mean of the color intensity of pixels belonging to the same cluster and then they are mapped back to the original positions in the initial image to get the segmentation. Another possible approach, which is more suitable for images with very sharp color intensities, such as in astronomical images, is to apply thresholding technique. This method is based on replacing the color intensity of each cluster by black if the mean is below a certain threshold or by white otherwise. This technique results therefore in binary images.

\begin{figure}[t!]
	\centering
	\includegraphics[width=0.32\textwidth]{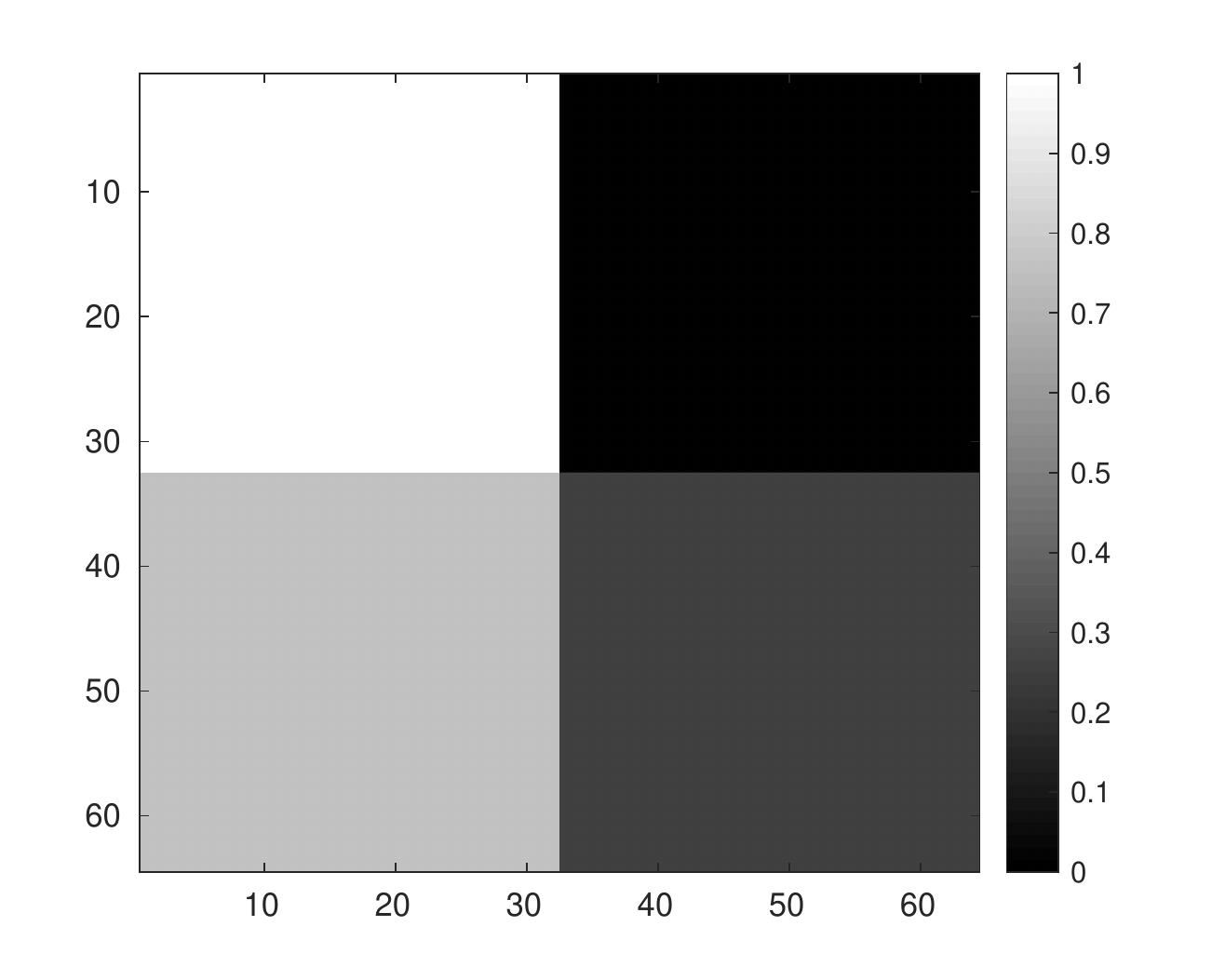}
	\includegraphics[width=0.32\textwidth]{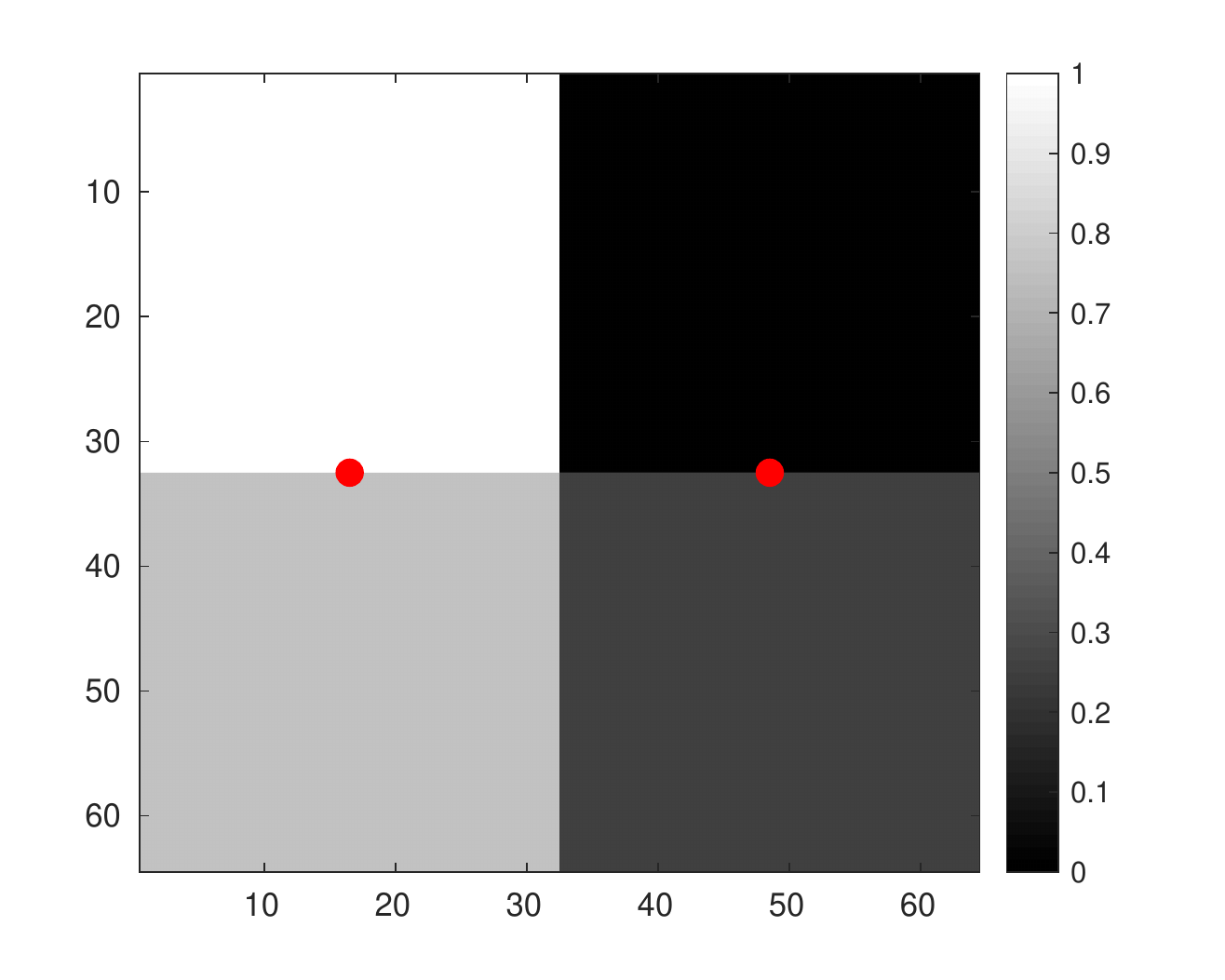}
	\includegraphics[width=0.32\textwidth]{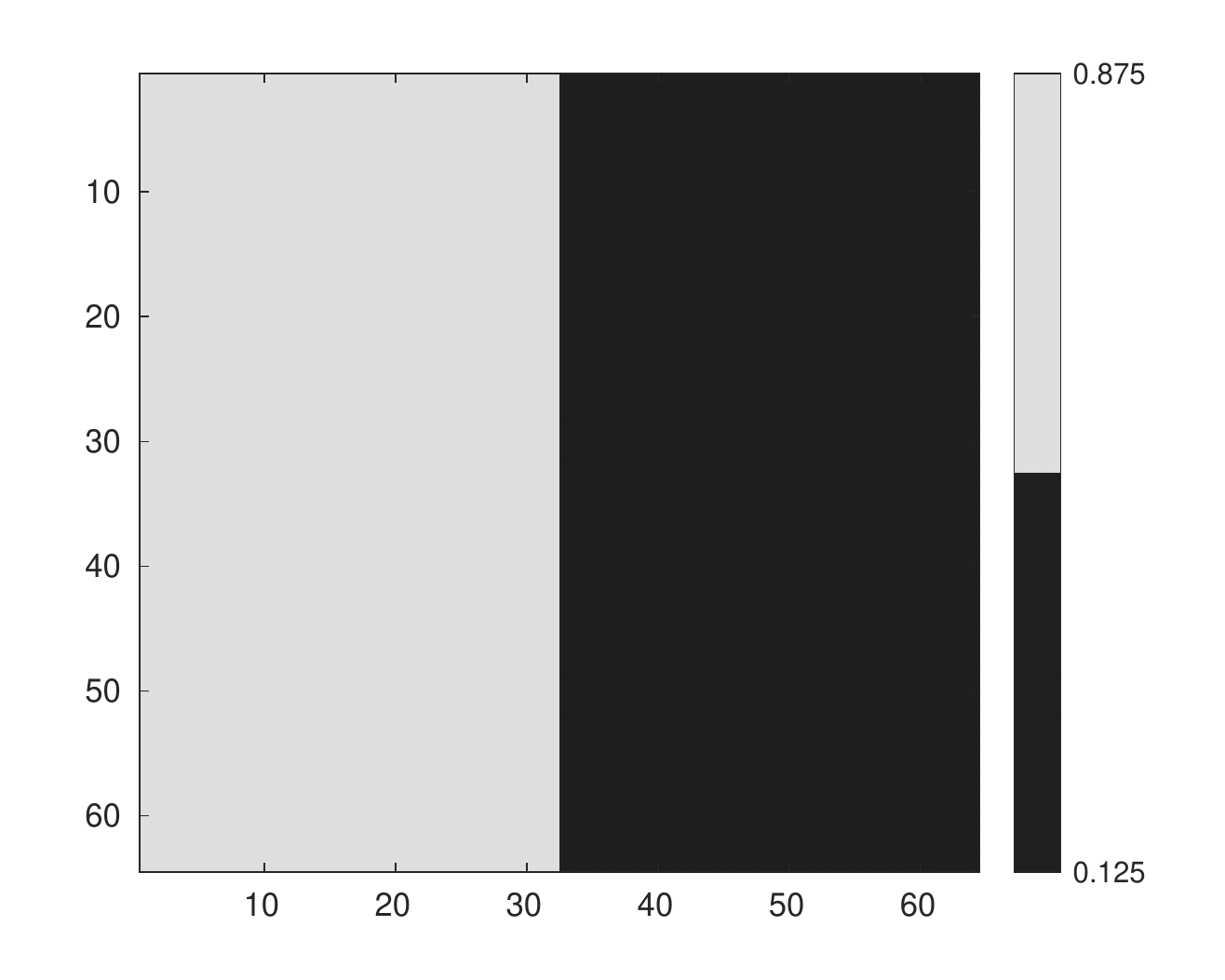}
	\caption{Left panel: initial image of $4096$ pixels with four regions with different gray intensity. Middle panel: red dots show the positions of the clusters at equilibrium. Right panel: segmentation of the initial image in two regions.\label{fig:imSegBenchmark}}
\end{figure}

In Figure~\ref{fig:imSegBenchmark} we first apply the segmentation to a benchmark test in order to show how the model works on a simple test. We initialize an gray scale image with $4096$ pixels with four regions of different intensity colors. The top and the bottom left corners have intensity $1$ and $0.75$, respectively. The top and the bottom right corners have intensity $0$ and $0.25$, respectively. See the left panel of Figure~\ref{fig:imSegBenchmark}. The positions of the pixels are rescaled on $\Omega=[0,1]^2$. The clustering algorithm is applied with the two bounded confidence values $\epsilon_1=0.5$ and $\epsilon_2=0.3$. At equilibrium the two red clusters showed in the middle panel of Figure~\ref{fig:imSegBenchmark} are computed by the kinetic model. The mean of the intensity colors of the pixels in each cluster is computed and mapped back to the initial position obtaining the segmentation in the right panel of Figure~\ref{fig:imSegBenchmark}. The two values of the gray intensity are $0.125$ and $0.875$.

\begin{figure}[t!]
	\centering
	\begin{subfigure}[b]{0.31\textwidth}
		\includegraphics[width=\textwidth]{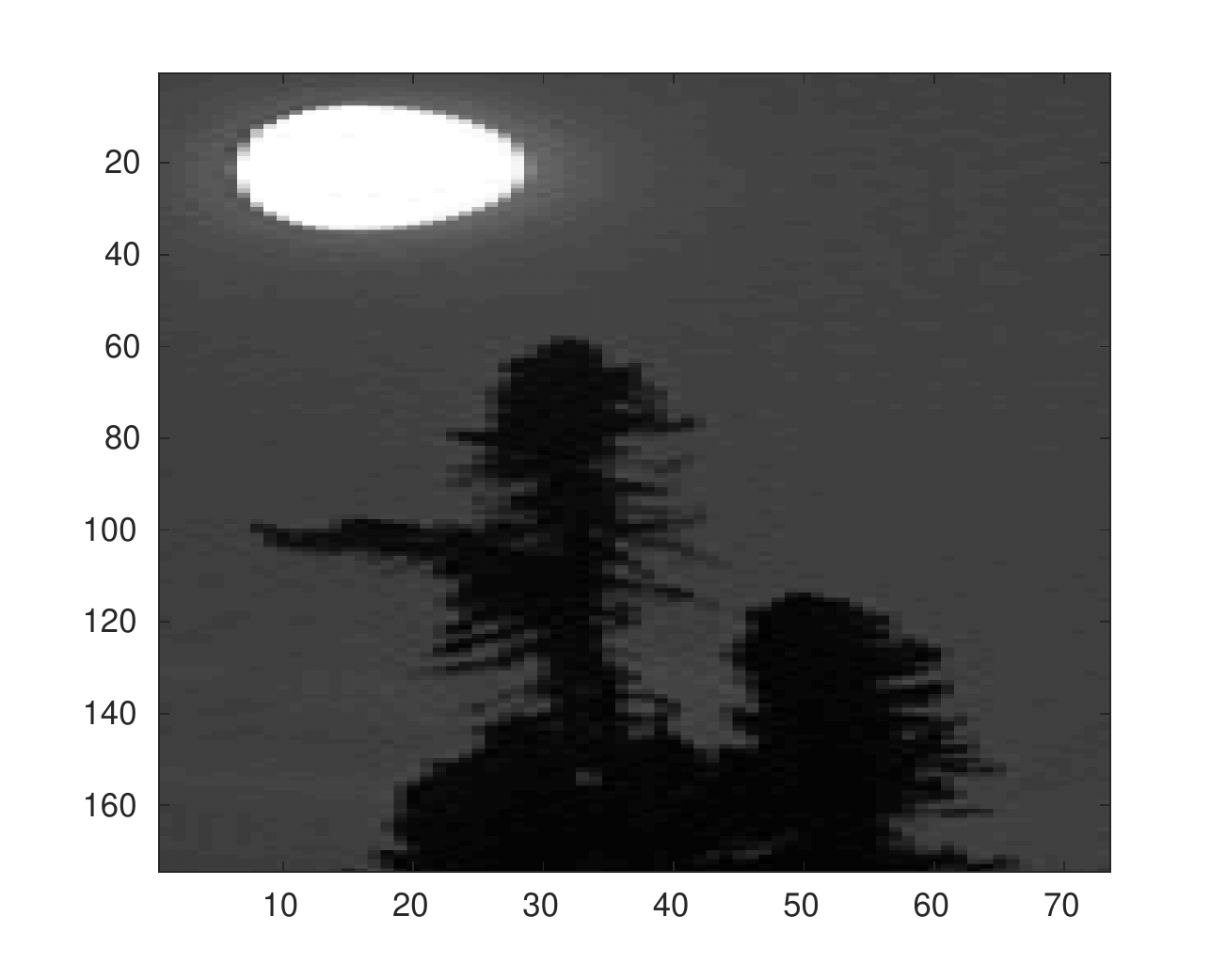}
		\caption{Initial image:\\$12702$ pixels}
	\end{subfigure}
	~ 
	\begin{subfigure}[b]{0.31\textwidth}
		\includegraphics[width=\textwidth]{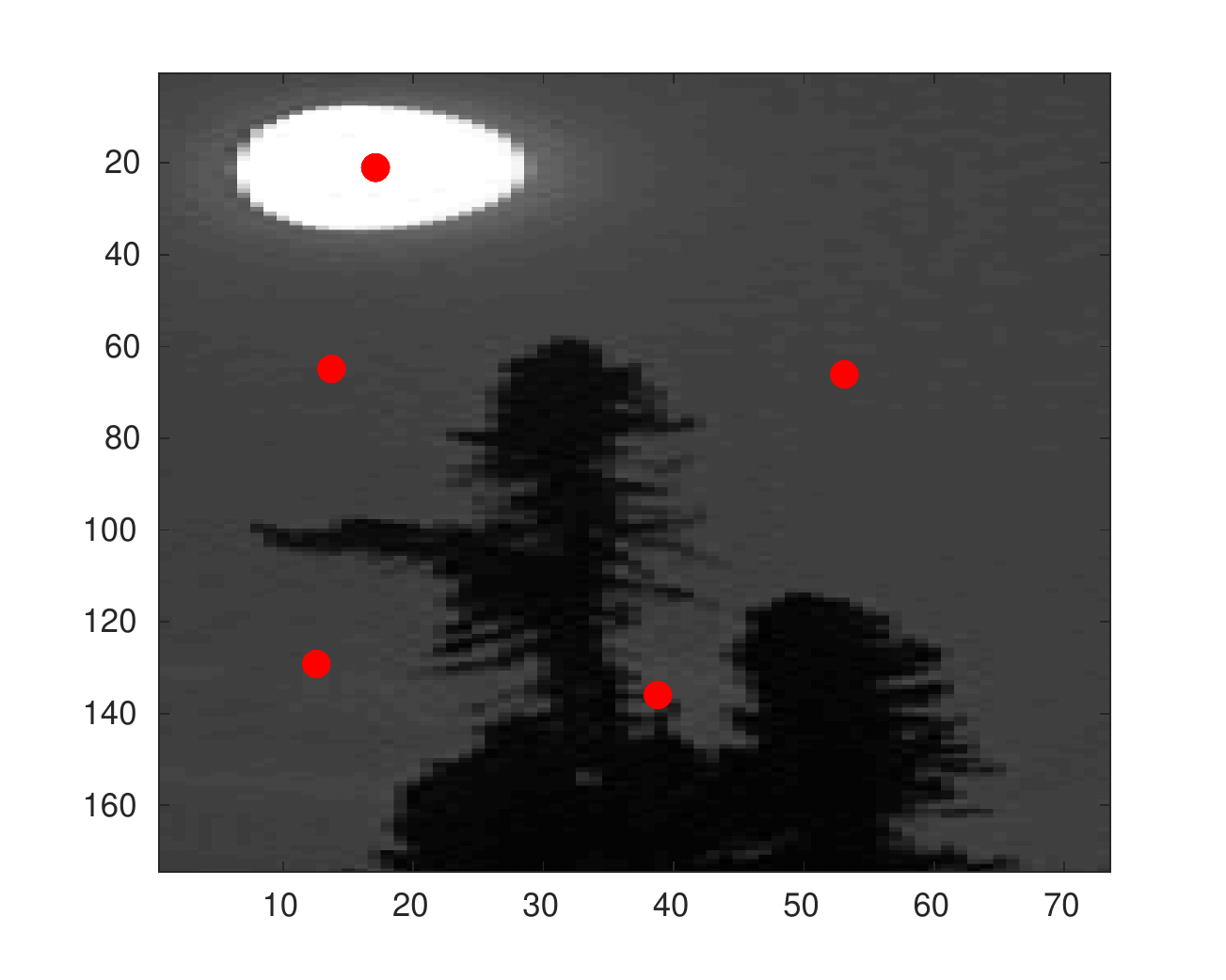}
		\caption{Clusters:\\ $\epsilon_1=0.2$, $\epsilon_2=0.1$}
	\end{subfigure}
	~ 
	\begin{subfigure}[b]{0.31\textwidth}
		\includegraphics[width=\textwidth]{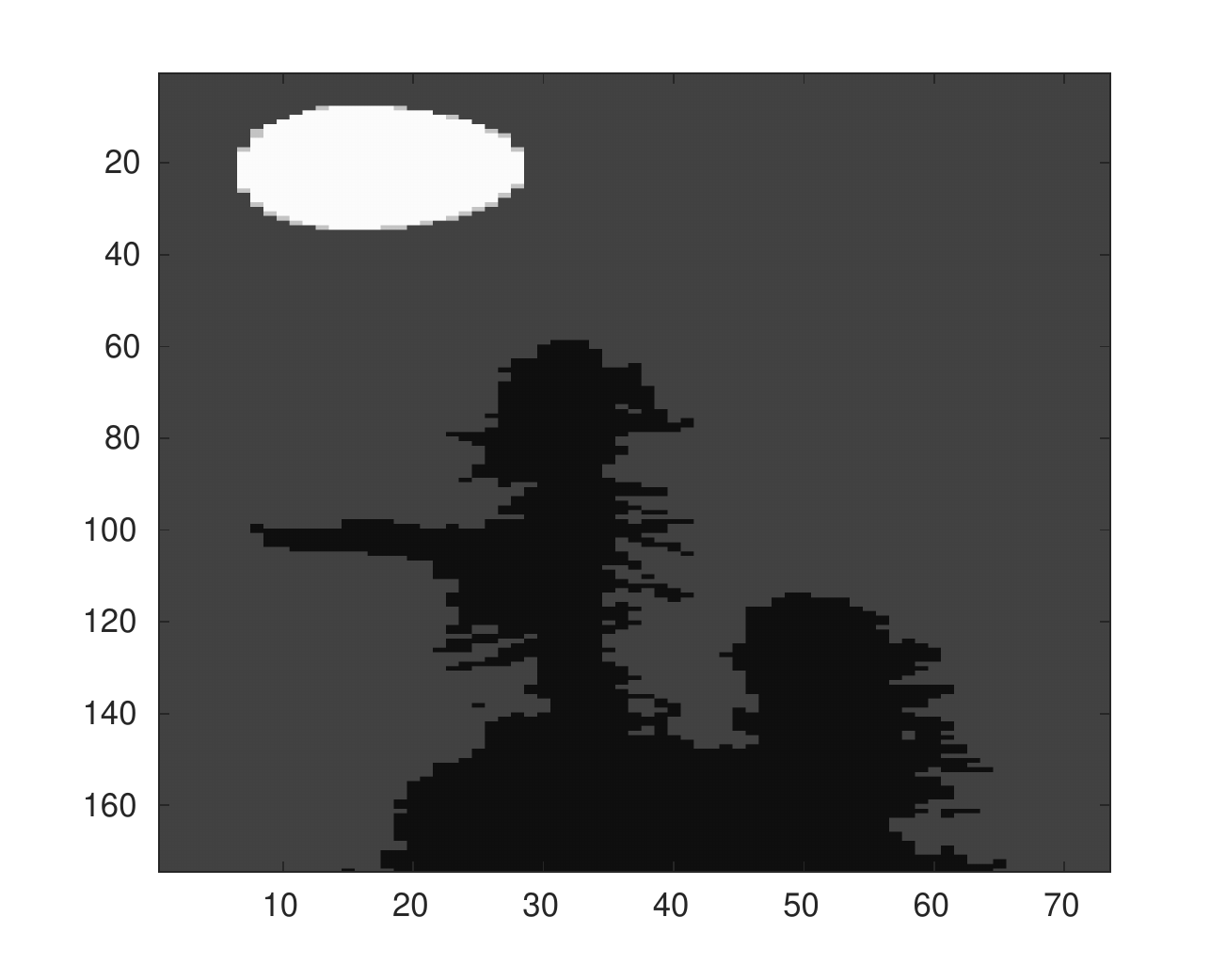}
		\caption{Segmentation:\\ $\epsilon_1=0.2$, $\epsilon_2=0.1$}
	\end{subfigure}
	\caption{Image segmentation of $174\times73$ gray scale image taken by the data--set~\cite{amfm_pami2011}.\label{fig:imSegNight}}
\end{figure}

We now apply the segmentation process to real images taken from different data--sets. In Figure~\ref{fig:imSegNight} we consider an image of $174\times73$ pixels showing a night sky, trees and the moon. On this example, the method is able to provide good results with a wide range of confidence values since the variation of the gray scale is sharp between the objects which are also characterized by regions having homogeneous colors. In Figure~\ref{fig:imSegNight} we report the result obtained with $\epsilon_1=0.2$ and $\epsilon_2=0.1$ which results in the formation of $5$ clusters.

\begin{figure}[t!]
	\centering
	\begin{subfigure}[b]{0.31\textwidth}
		\includegraphics[width=\textwidth]{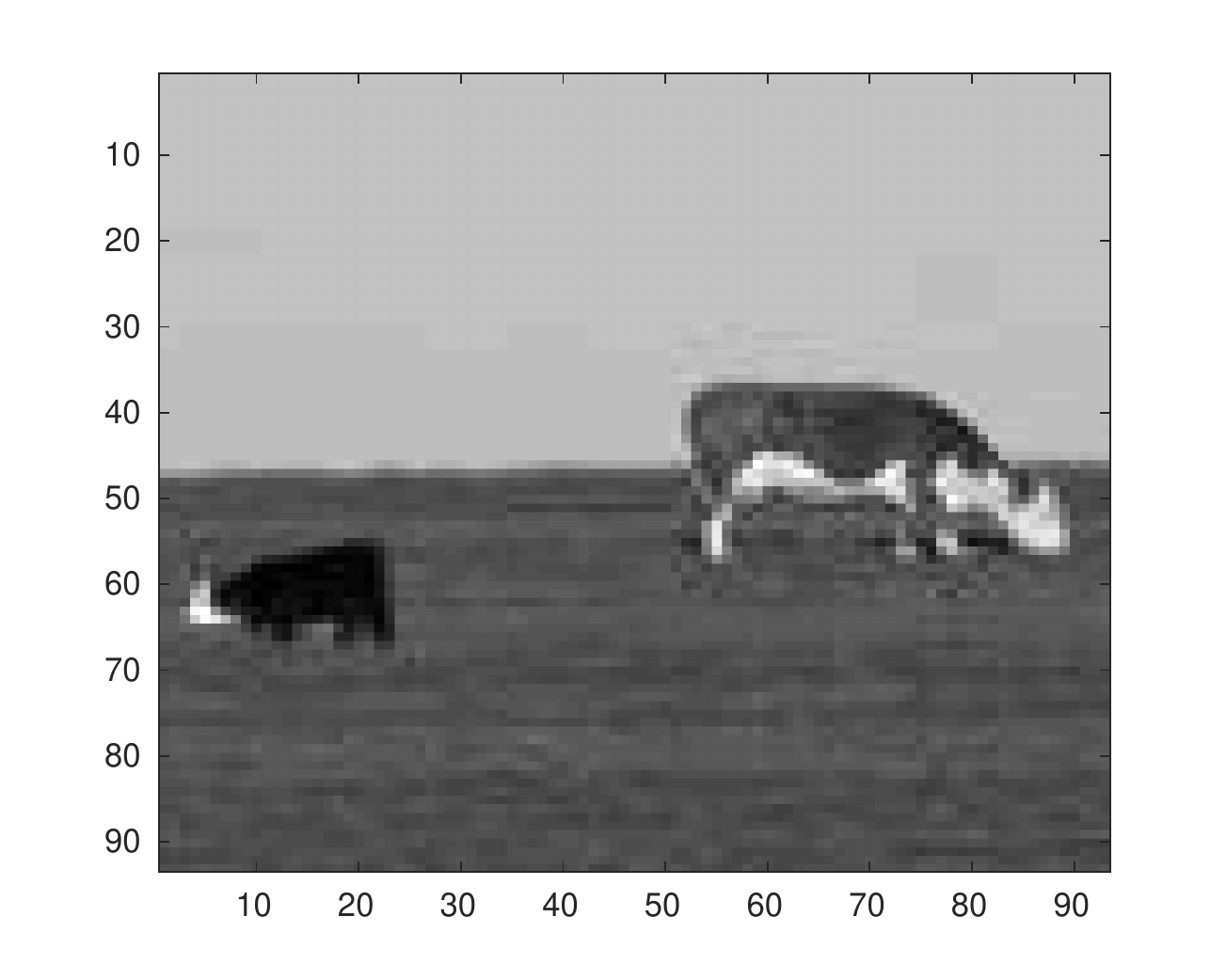}
		\caption{Initial image:\\$8649$ pixels}
	\end{subfigure}
	~ 
	\begin{subfigure}[b]{0.31\textwidth}
		\includegraphics[width=\textwidth]{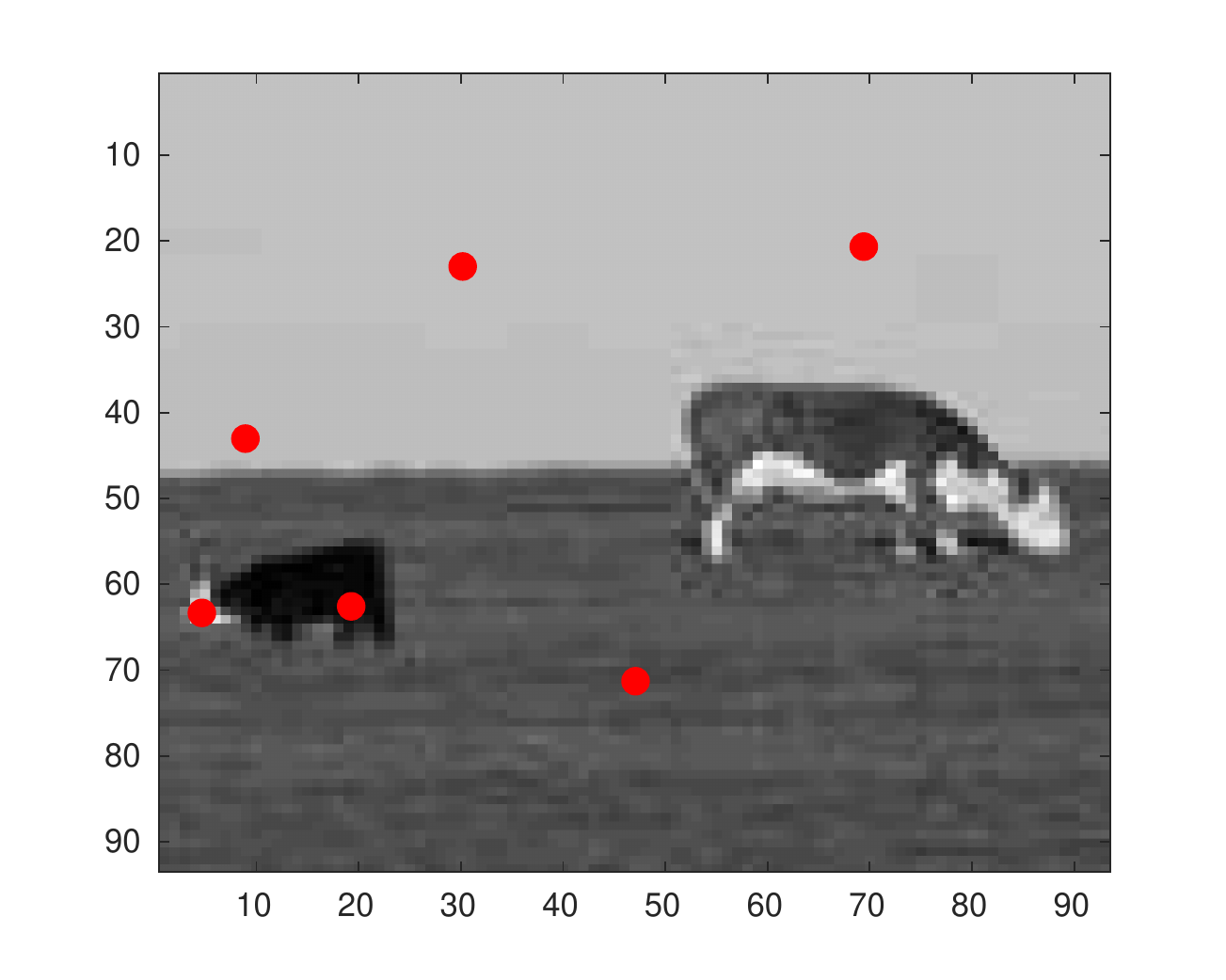}
		\caption{Clusters:\\ $\epsilon_1=0.2$, $\epsilon_2=0.05$}
	\end{subfigure}
	~ 
	\begin{subfigure}[b]{0.31\textwidth}
		\includegraphics[width=\textwidth]{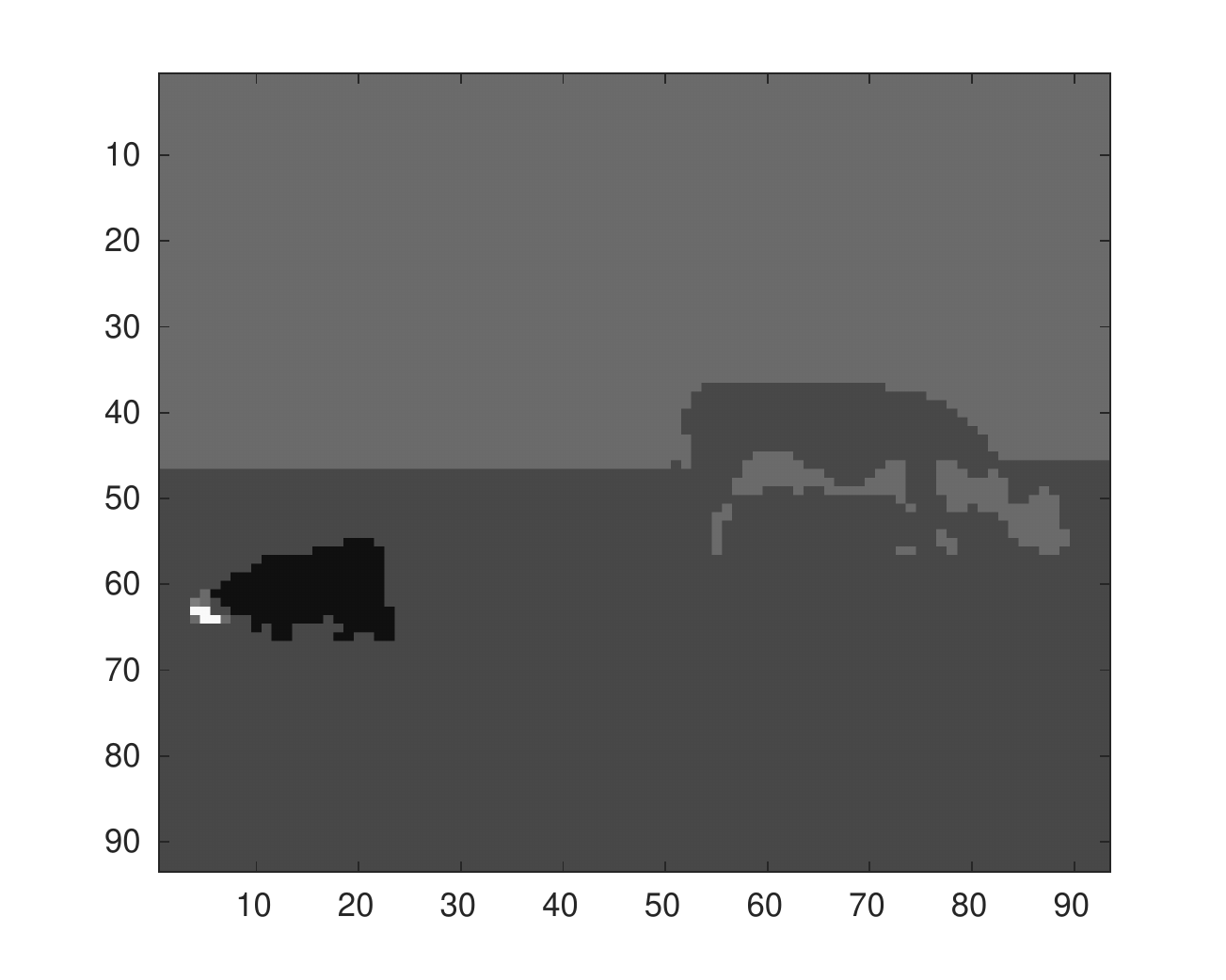}
		\caption{Segmentation:\\ $\epsilon_1=0.2$, $\epsilon_2=0.05$}
	\end{subfigure}
	\caption{Image segmentation of $93\times93$ gray scale image taken by the data--set~\cite{datasetICCV}.\label{fig:imSegCows}}
\end{figure}

The second image in Figure~\ref{fig:imSegCows} is also characterized by two backgrounds: the sky which is very homogeneous and the field which is less homogeneous. The goal of the segmentation process would be to identify the two cows, separating them from the background. We report the results obtained with $\epsilon_1=0.2$ and $\epsilon_2=0.05$, resulting in $6$ clusters. Observe that $\epsilon_2$ is taken small in order to avoid the formation of many clusters due to the low homogeneity of the field. The two cows are well identified by the method.

\begin{figure}[t!]
	\centering
	\begin{subfigure}[b]{0.31\textwidth}
		\includegraphics[width=\textwidth]{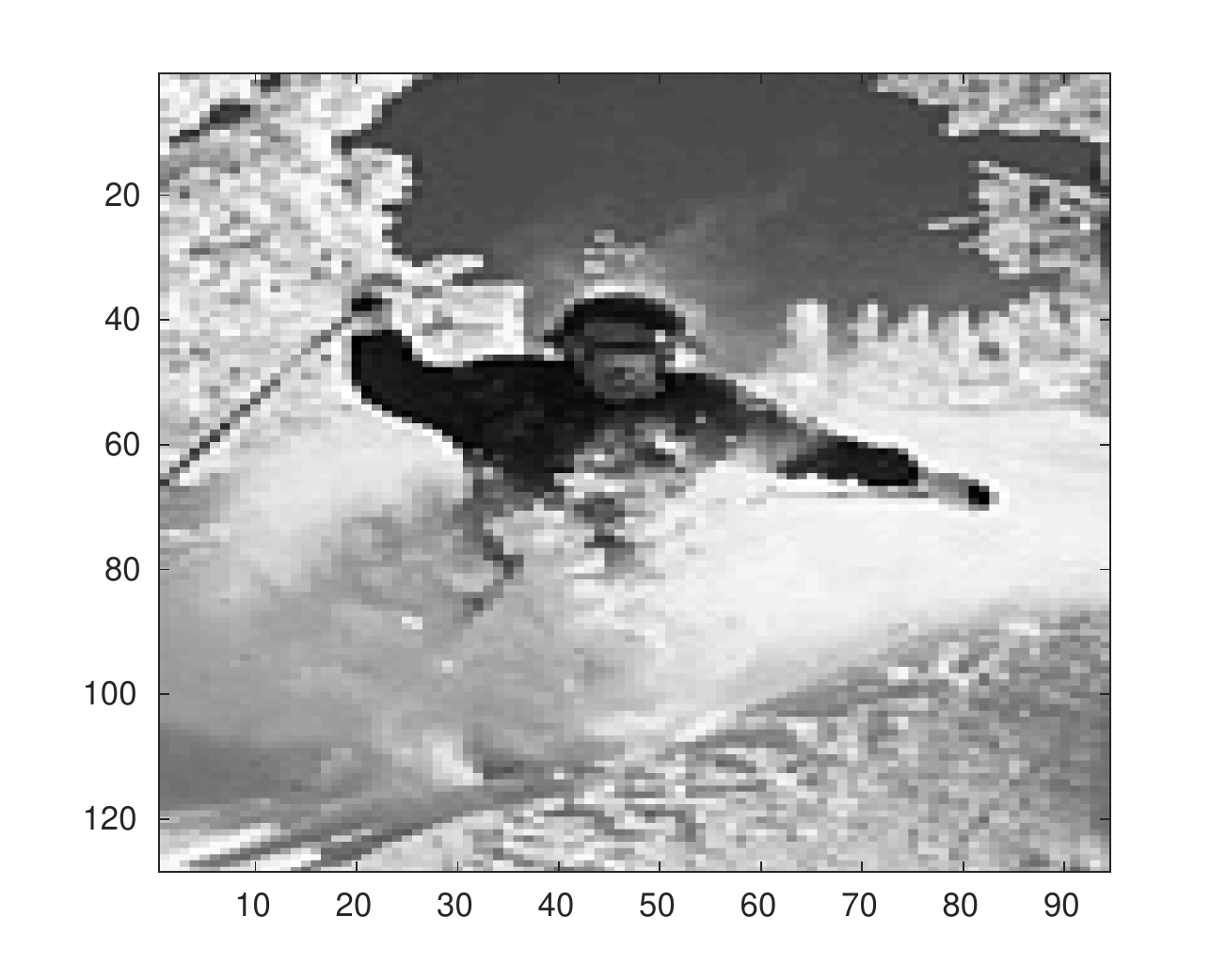}
		\caption{Initial image:\\$12032$ pixels}
	\end{subfigure}
	~ 
	\begin{subfigure}[b]{0.31\textwidth}
		\includegraphics[width=\textwidth]{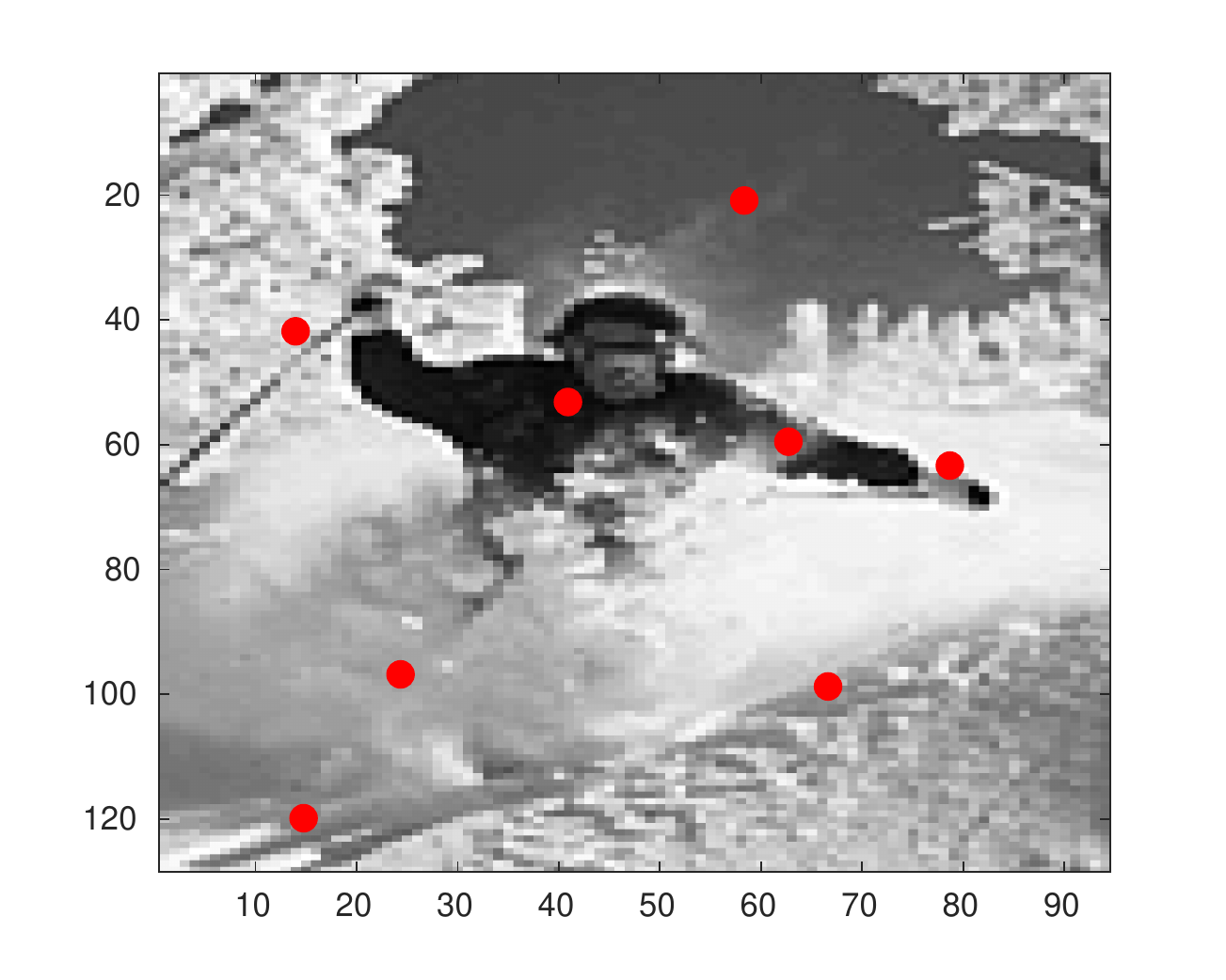}
		\caption{Clusters:\\ $\epsilon_1=0.15$, $\epsilon_2=0.15$}
	\end{subfigure}
	~ 
	\begin{subfigure}[b]{0.31\textwidth}
		\includegraphics[width=\textwidth]{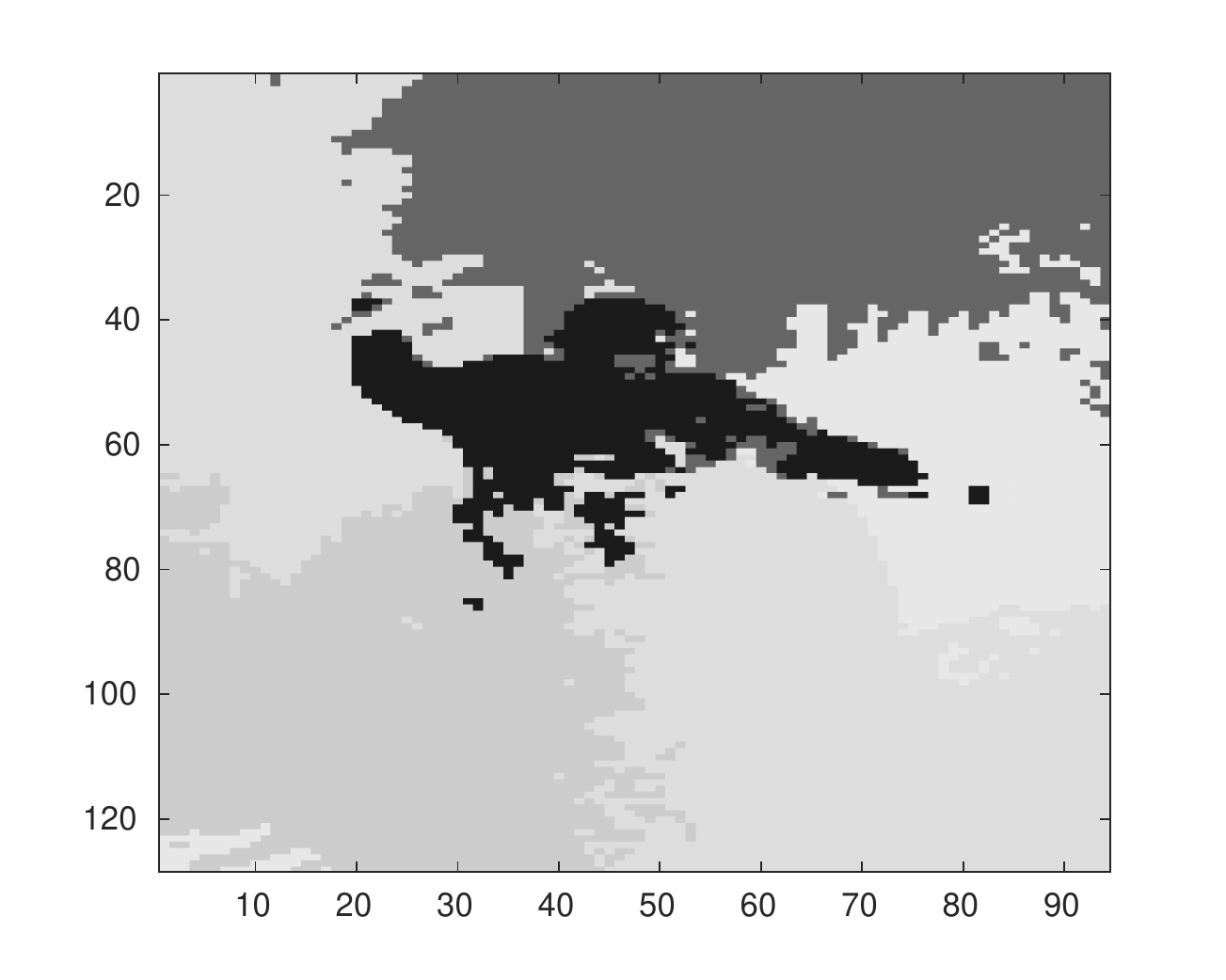}
		\caption{Segmentation:\\ $\epsilon_1=0.15$, $\epsilon_2=0.15$}
	\end{subfigure}
	\caption{Image segmentation of $128\times94$ gray scale image taken by the data--set~\cite{MartinFTM01}.\label{fig:imSegSnow}}
\end{figure}

In the case of Figure~\ref{fig:imSegSnow} the confidence level $\epsilon_2$ is taken larger in order to force clustering of regions with snow but different shadows. The clustering method identifies $8$ clusters and the skier is well separated by the background. 

\begin{figure}[t!]
	\centering
	\begin{subfigure}[b]{0.31\textwidth}
		\includegraphics[width=\textwidth]{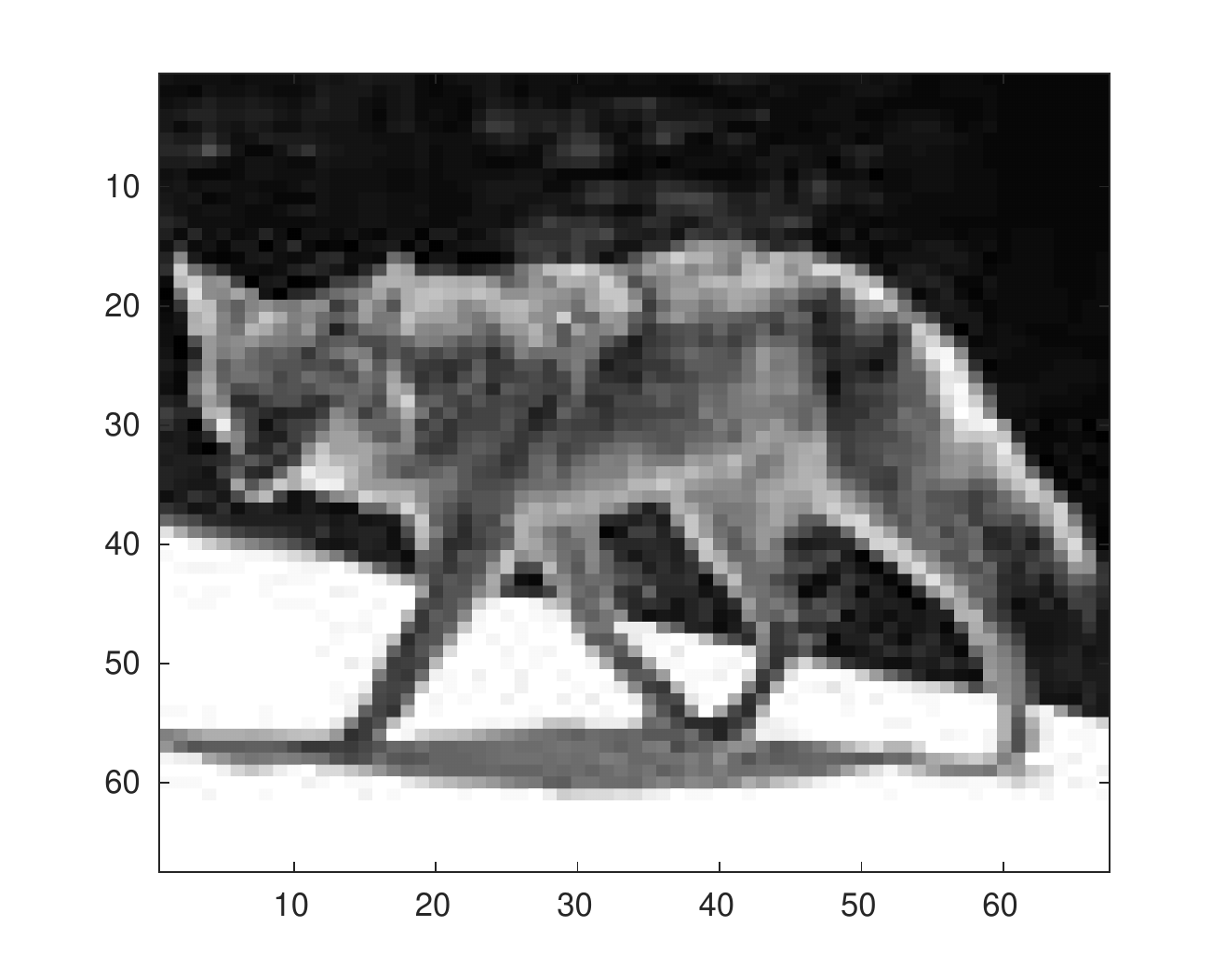}
		\caption{Initial image:\\$4489$ pixels}
	\end{subfigure}
	~ 
	\begin{subfigure}[b]{0.31\textwidth}
		\includegraphics[width=\textwidth]{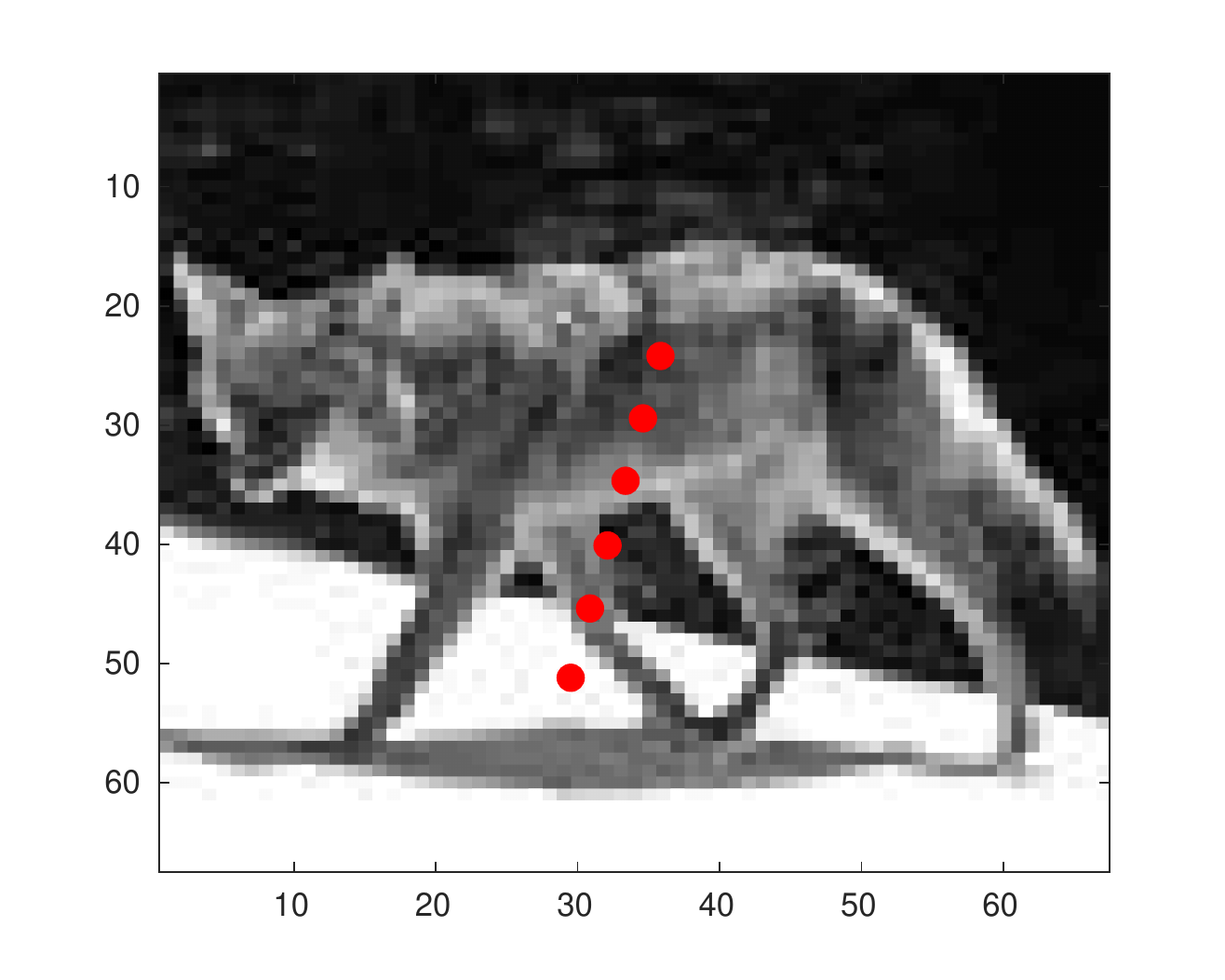}
		\caption{Clusters:\\ $\epsilon_1=0.3$, $\epsilon_2=0.1$}
	\end{subfigure}
	~ 
	\begin{subfigure}[b]{0.31\textwidth}
		\includegraphics[width=\textwidth]{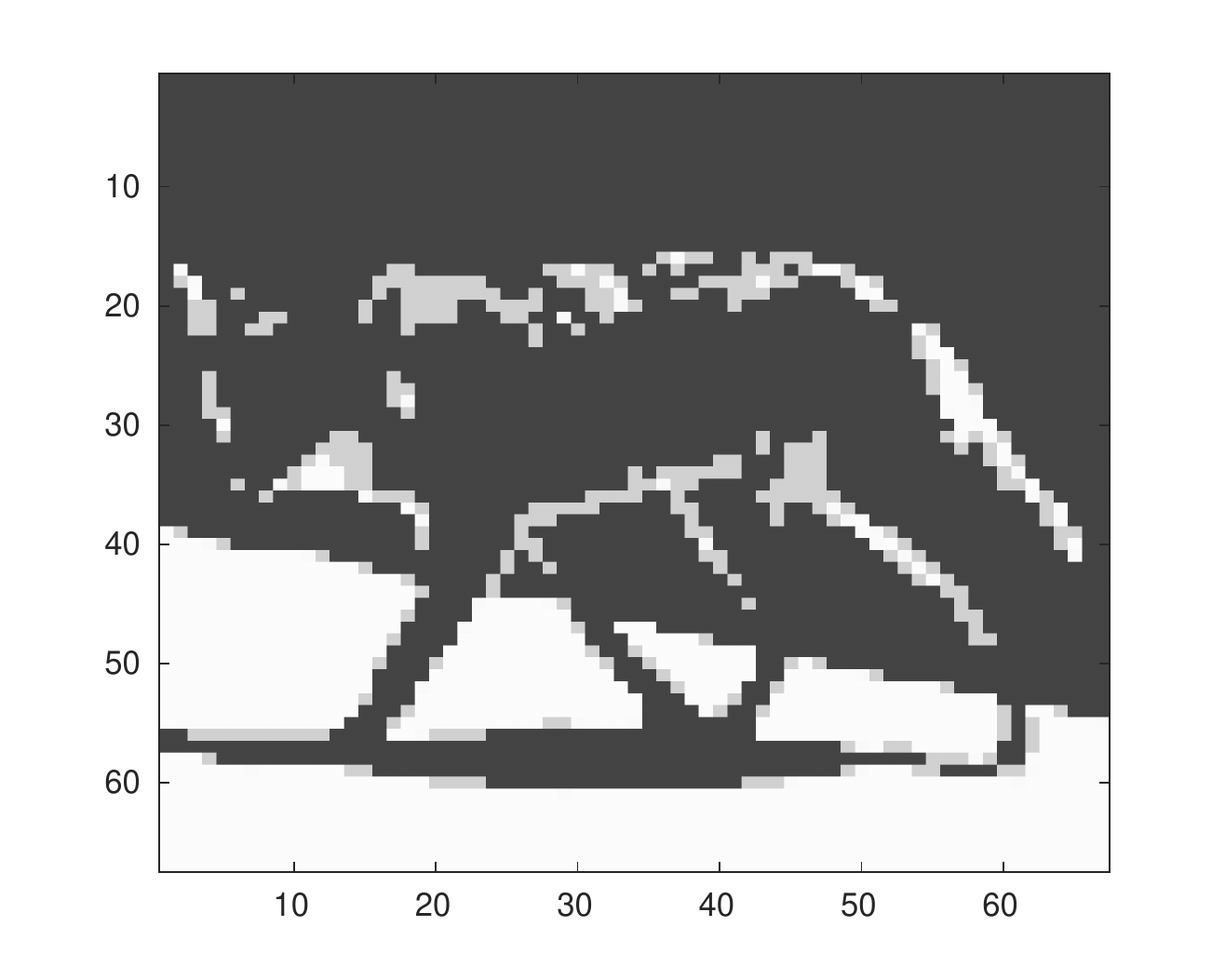}
		\caption{Segmentation:\\ $\epsilon_1=0.3$, $\epsilon_2=0.1$}
	\end{subfigure}
	~ 
	\begin{subfigure}[b]{0.31\textwidth}
		\includegraphics[width=\textwidth]{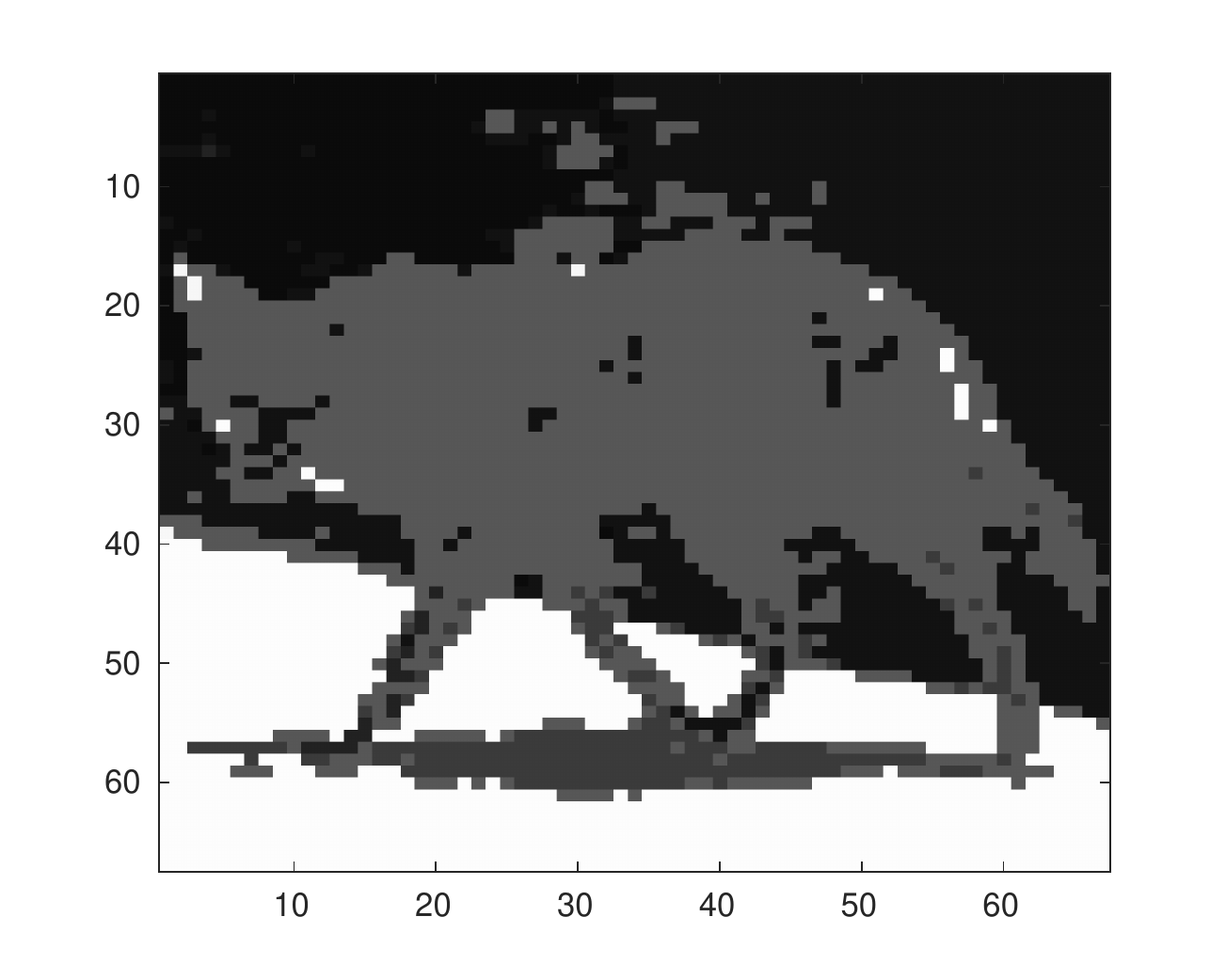}
		\caption{Segmentation:\\$\epsilon_1=0.2$, $\epsilon_2=0.1$}
	\end{subfigure}
	\begin{subfigure}[b]{0.31\textwidth}
		\includegraphics[width=\textwidth]{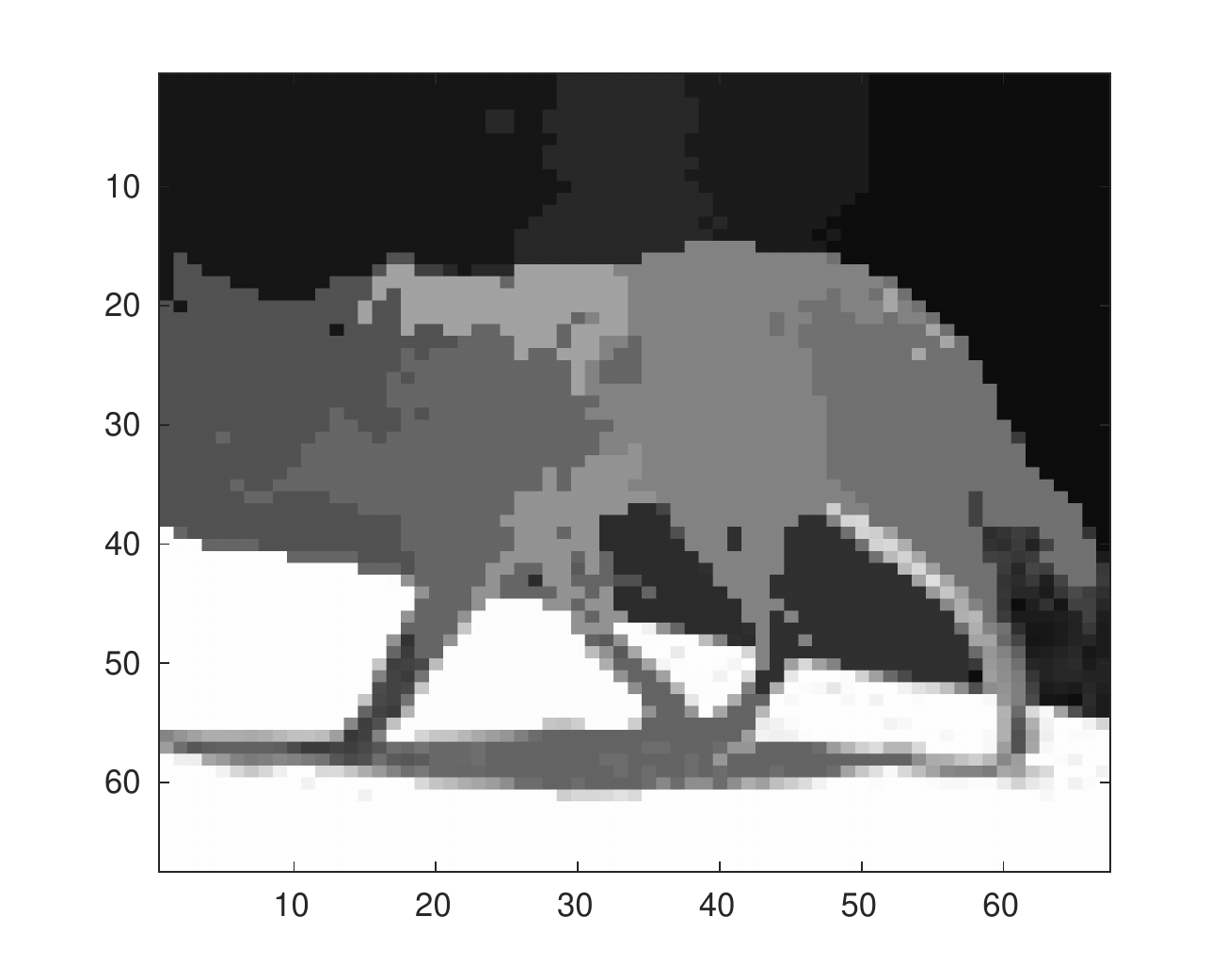}
		\caption{Segmentation:\\$\epsilon_1=0.1$, $\epsilon_2=0.1$}
	\end{subfigure}
	~ 
	\begin{subfigure}[b]{0.31\textwidth}
		\includegraphics[width=\textwidth]{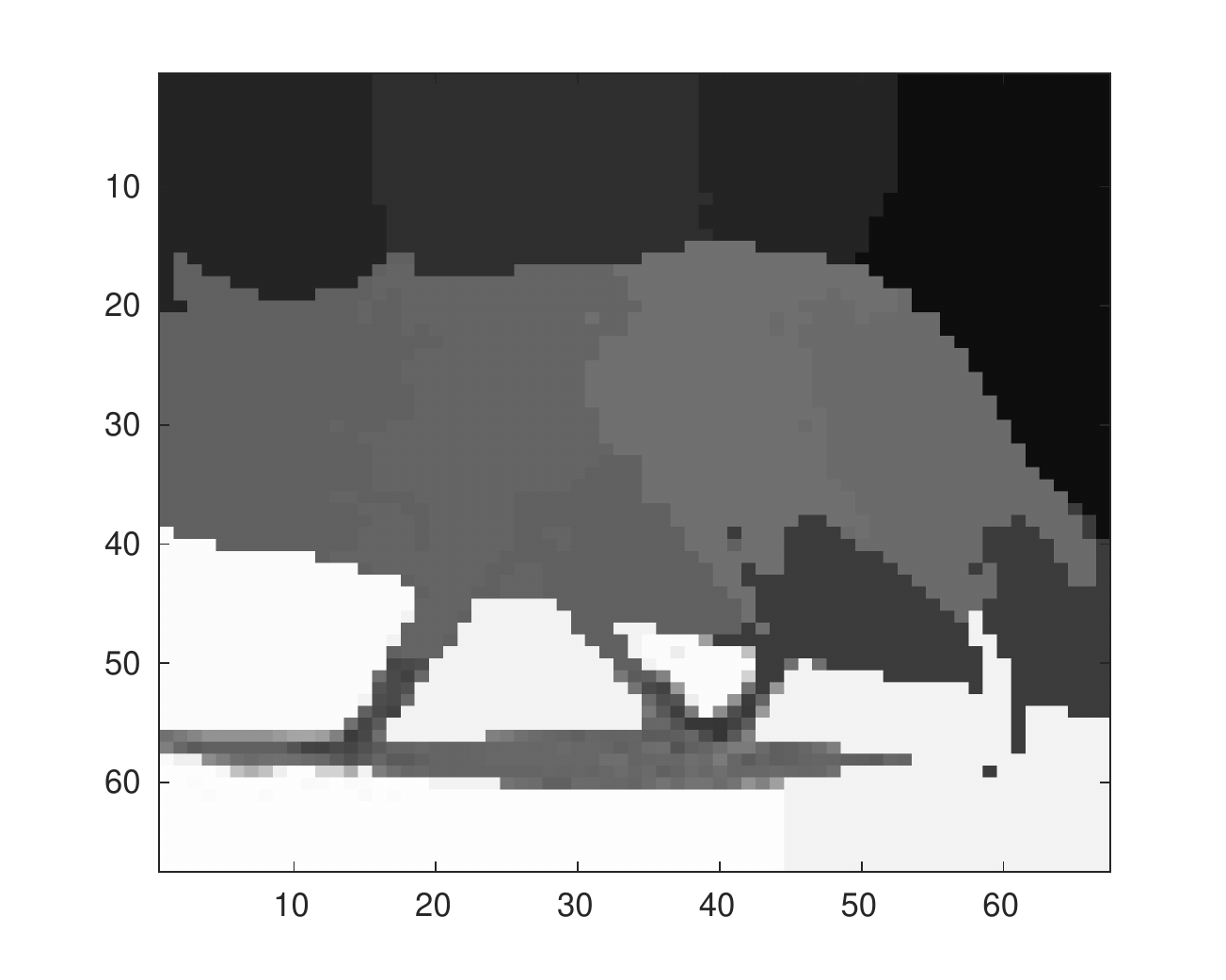}
		\caption{Segmentation:\\$\epsilon_1=0.1$, $\epsilon_2=0.2$}
	\end{subfigure}
	\caption{Image segmentation of $67\times67$ gray scale image taken by the data--set~\cite{MartinFTM01}.\label{fig:imSegWolf}}
\end{figure}

In Figure~\ref{fig:imSegWolf} we employ the kinetic model for clustering to segmentation of a real image taken by~\cite{MartinFTM01}. The true image is $67\times67$ pixels and several values of the bounded confidence levels $\epsilon_1$ and $\epsilon_2$ are applied in order to cluster with respect positions and intensity color of pixels. The best result is obtained with $\epsilon_1=0.3$ and $\epsilon_2=0.1$ which results in the formation of $8$ clusters. Lower values of $\epsilon_1$ allows to obtain more clusters and thus more gray intensity colors at equilibrium. Increasing $\epsilon_1$ requires to not take $\epsilon_2$ also large, otherwise many information are lost at equilibrium.

\begin{figure}[t!]
	\centering
	\begin{subfigure}[b]{0.31\textwidth}
		\includegraphics[width=\textwidth]{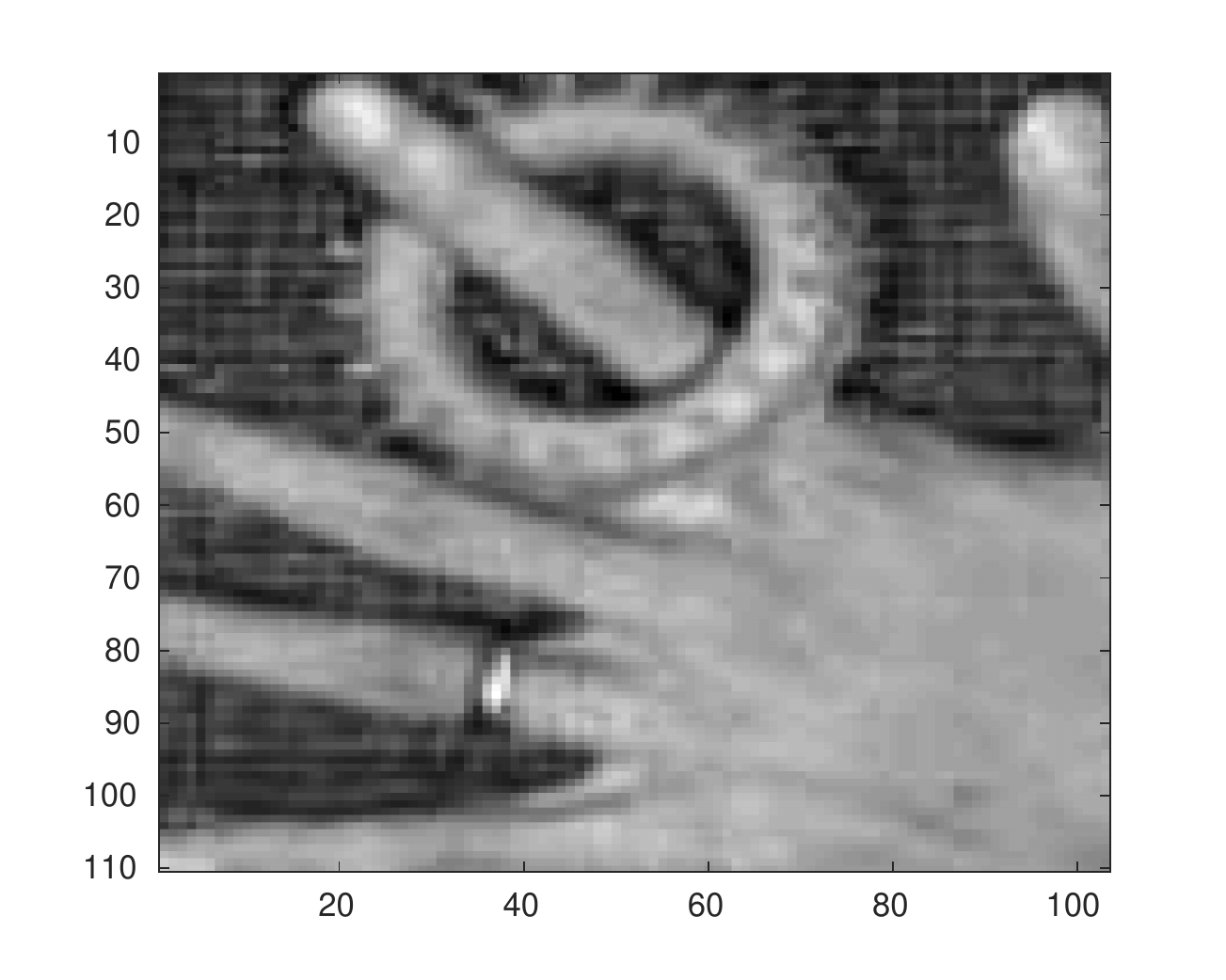}
		\caption{Initial image:\\$13992$ pixels}
	\end{subfigure}
	~ 
	\begin{subfigure}[b]{0.31\textwidth}
		\includegraphics[width=\textwidth]{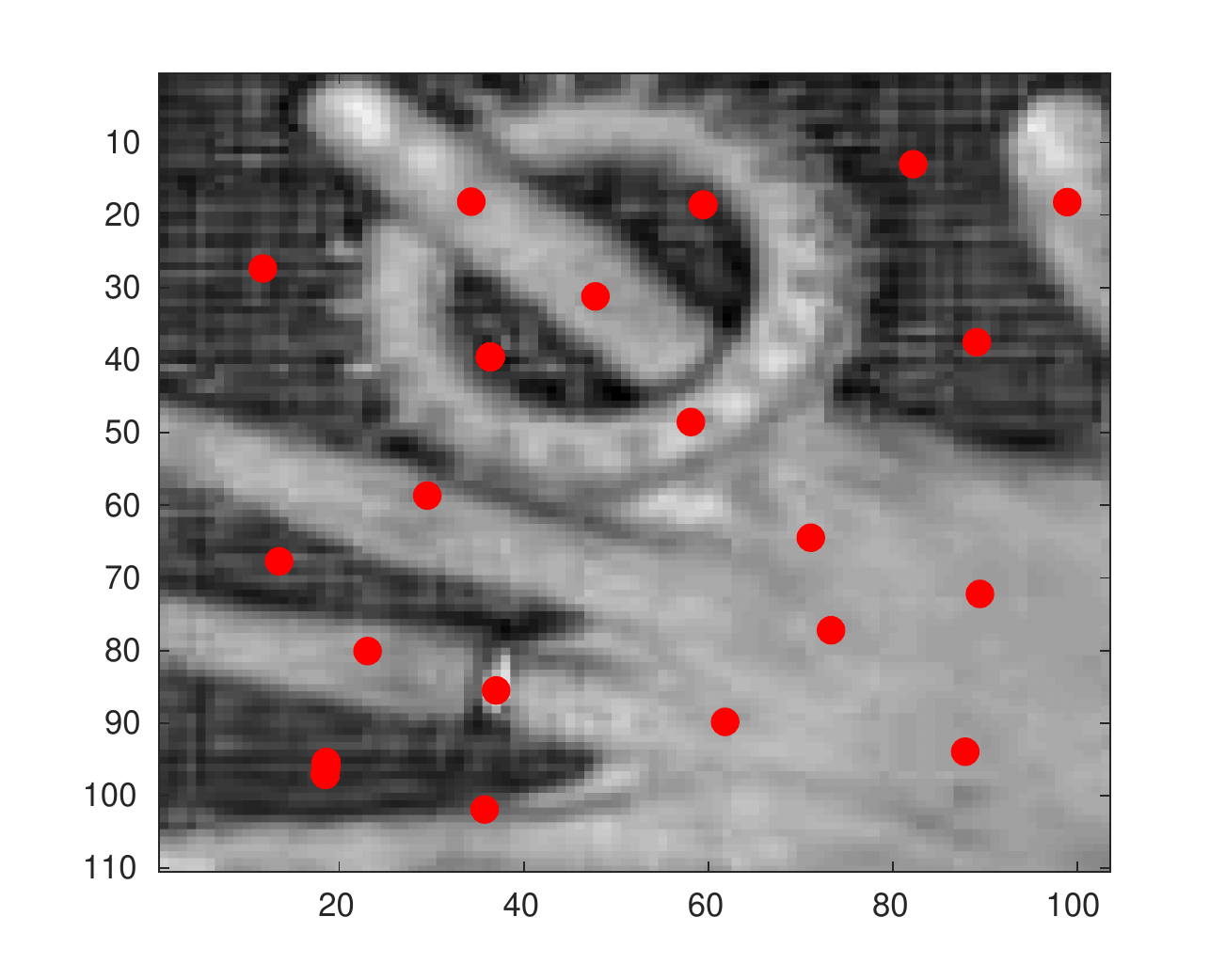}
		\caption{Clusters:\\ $\epsilon_1=0.05$, $\epsilon_2=0.10$}
	\end{subfigure}
	~ 
	\begin{subfigure}[b]{0.31\textwidth}
		\includegraphics[width=\textwidth]{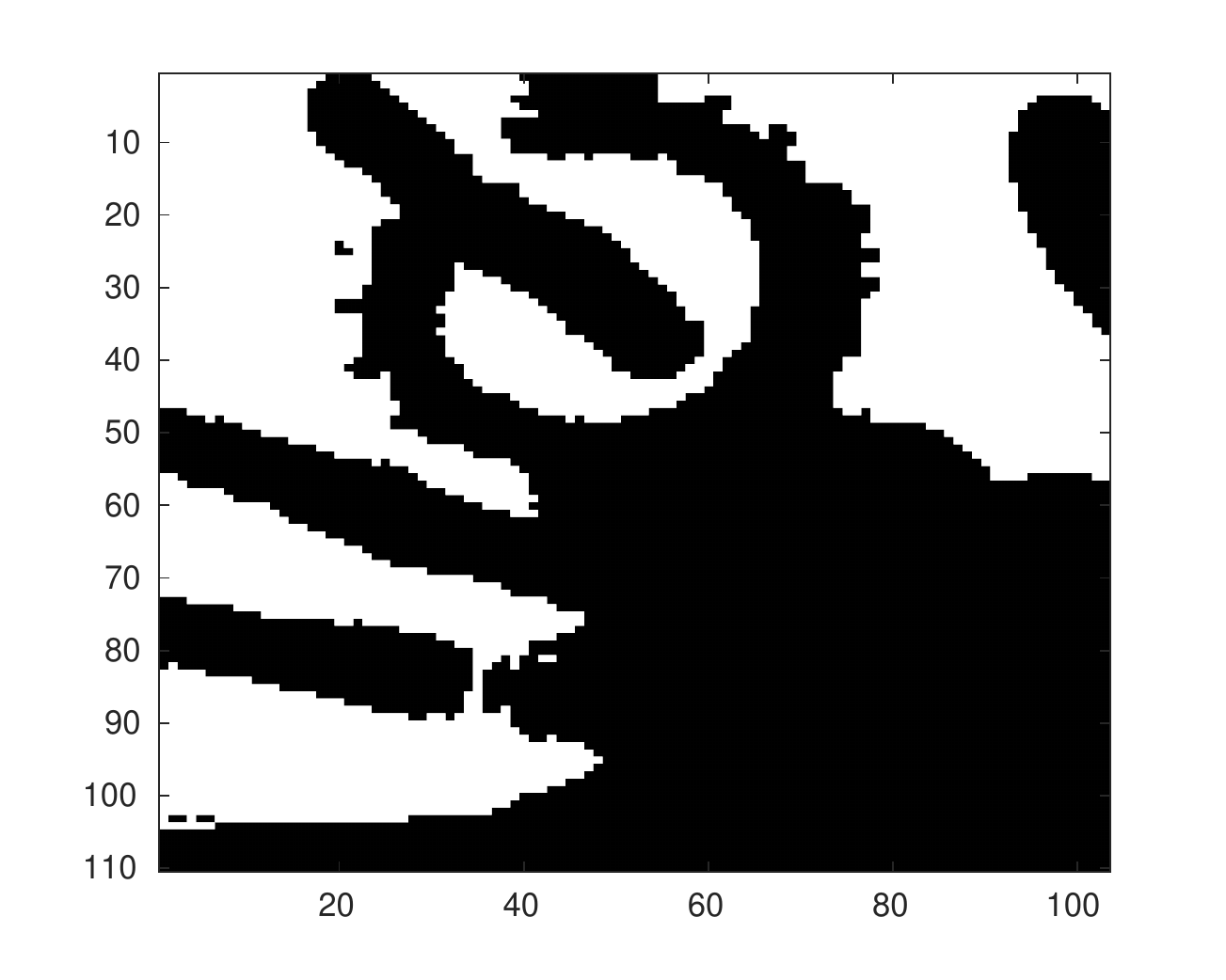}
		\caption{Segmentation:\\ $\epsilon_1=0.05$, $\epsilon_2=0.10$}
	\end{subfigure}
	\caption{Image segmentation of $132\times106$ gray scale image taken by the data--set~\cite{MartinFTM01}.\label{fig:imSegHand}}
\end{figure}

Finally, in Figure~\ref{fig:imSegHand} we present an additional experiment aimed to provide a comparison between our segmentation process and the one published in~\cite{2016Liuetal} based on the Kuramoto model. The image is selected from the data--set~\cite{MartinFTM01}. Right panel of Figure~\ref{fig:imSegHand} has to be compared with Figure (7c) in~\cite{2016Liuetal} and, in particular, we observe that our result is able to detect better some structures of the image that are lost by using the Kuramoto model.


\section{Conclusions} \label{sec:conclusion}

In this paper we have proposed a new method for clustering of large data which is based on a suitable generalization of the Hegselmann-Krause opinion dynamics model. The extension accounts for clustering with respect to two different sets of features of each single datum: one set describes time dependent characteristics, the other one represents static characteristics.

The mean--field limit of the particle model has been formally derived and the resulting kinetic equation allows for the study of mathematical properties. Time evolution of moments and asymptotic behavior of kinetic equation have been analytically characterized. Moreover, the derivation of the mean--field model allows for reduction of computational complexity thanks to a suitable random subset Monte Carlo algorithm.

Several applications to digital imaging have been proposed. In particular, we have focused on application to shape detection and image segmentation, showing the efficiency of the model to provide satisfying results. Finally, we emphasize that the present model has to be intended as a starting point towards more realistic applications, for example based on the use of non static features combined with suitable machine learning approaches. 

\section*{Acknowledgments}
The authors would like to thank the German Research Foundation (DFG) for the kind support within the Cluster of Excellence ``Internet of Production'' (IoP ID390621612).
Lorenzo Pareschi and Giuseppe Visconti acknowledge the support of the ``National Group for Scientific Computation (GNCS-INDAM)'', project ''Numerical approximation of hyperbolic problems and applications''.

\bibliographystyle{plain}
\bibliography{referencesBD}

\end{document}